\renewcommand\section{\@startsection {section}{1}{\z@}%
                                   {-3.5ex \@plus -1ex \@minus -.2ex}%
                                   {2.3ex \@plus.2ex}%
                                   {\normalfont\large\bfseries\centering}}
\renewcommand\subsection{\@startsection{subsection}{2}{\z@}%
                                     {-3.25ex\@plus -1ex \@minus -.2ex}%
                                     {1.5ex \@plus .2ex}%
                                     {\normalfont\normalsize\bfseries}}
\renewcommand{\@secnumfont}{\bfseries} 
\newcommand{\cintpos}{\ointctrclockwise} 
\newcommand{\rd}{\, \mathrm{d}} 
\newcommand{\iverson}[1]{\left[\!\left[ #1 \right]\!\right]}
\newcommand{\NN}{\mathbb{N}} 
\newcommand{\ZZ}{\mathbb{Z}}
\newcommand{\RR}{\mathbb{R}}
\newcommand{\CC}{\mathbb{C}}
\newcommand{\DD}{\mathbb{D}}
\newcommand{\iu}{i\mkern1mu} 
\newcommand{\abs}[1]{\left\lvert #1 \right\rvert}
\newcommand{\card}[1]{\left\lvert #1 \right\rvert}
\newcommand{\maxnorm}[1]{{\left\lVert #1 \right\rVert}_{\max}}
\newtheorem{theorem}{Theorem}[section]
\newtheorem{corollary}{Corollary}[section]
\newtheorem{lemma}{Lemma}[section]
\newtheorem{proposition}{Proposition}[section]
\theoremstyle{definition}
\newtheorem{definition}{Definition}[section]
\newtheorem{remark}{Remark}[section]
\newtheorem{example}{Example}[section]
\newtheorem{open problem}{Open Problem}[section]
\begin{document}

\oddsidemargin 10mm
\evensidemargin 10mm

\thispagestyle{plain}

\vspace{5cc}
\begin{center}

{\Large\bf {Functional Equations for Generalized Collatz Dynamics Using \\ Integral Representations and Residue Calculus} \par 
\rule{0mm}{5mm}\renewcommand{\thefootnote}{}
\footnotetext{\scriptsize \textbf{2020 Mathematics Subject Classification.} Primary 39B32; Secondary 30E20, 11B37. 

\rule{0mm}{0mm}\textbf{Keywords and Phrases.} Functional equations, generating functions, contour integration, residue calculus, several complex variables, Egorychev method, arithmetic dynamics, Collatz $3n+1$ problem.}} 

{\large\it Christos N. Efrem }

\vspace{1cc}
\parbox{26cc}{{\small

\textbf{Abstract.} 
This paper focuses on a wide class of Collatz-type arithmetic dynamics, and presents a systematic derivation of recursive formulas and functional equations satisfied by the associated generating functions. The main tools belong to complex analysis, including contour-integral representations, residue calculus, and Cauchy's integral formulas. The basic approach is inspired by Egorychev's method, which has been used so far to establish combinatorial identities. Moreover, this work generalizes existing results and extends the methodology to multiple dimensions using several complex variables. 

}}
\end{center}


\vspace{1.5cc}

\section{Introduction}

The \emph{classical Collatz map} (or, \emph{$3n+1$ map}) $\psi_{\mathrm{cl}} \colon \NN_0 \to \NN_0$ is defined by
\begin{equation*}
\psi_{\mathrm{cl}}(n) \coloneqq
\begin{cases}
\frac{n}{2} , & \mathrm{if}\ n \equiv 0 \pmod{2}  , \\
3n+1 , & \mathrm{if}\ n \equiv 1 \pmod{2}  . 
\end{cases} 
\end{equation*}
By exploiting the fact that $3n+1$ is even whenever $n$ is odd, we can also define the \emph{shortened Collatz map} (or, \emph{shortened $3n+1$ map}) $\psi \colon \NN_0 \to \NN_0$ as follows
\begin{equation} \label{eq:Shortened_3n+1_map}
\psi(n) \coloneqq 
\begin{cases}
\psi_{\mathrm{cl}}(n) = \frac{n}{2} , & \mathrm{if}\ n \equiv 0 \pmod{2}  , \\
\psi_{\mathrm{cl}}(\psi_{\mathrm{cl}}(n)) = \frac{3n+1}{2} , & \mathrm{if}\ n \equiv 1 \pmod{2}  . 
\end{cases}
\end{equation}
The associated arithmetic dynamics induced by the latter map are given by 
\begin{equation} \label{eq:Shortened_3n+1_map_iterates}
\psi_k(n) \coloneqq \psi(\psi_{k-1}(n))= \psi_{k-1}(\psi(n)) ,  \quad  \forall k \in \mathbb{N} ,
\end{equation}
with initial condition $\psi_0(n) \coloneqq n$. The infamous \emph{Collatz conjecture} \cite{Collatz_1986} asserts that for all integers $n \geq 1$, there exists an integer $k = k(n) \geq 1$ such that $\psi_k(n) = 1$. No proof or disproof has been found so far, despite the significant efforts of many mathematicians and computer scientists.  

An extensive discussion for this challenging problem and its associated results can be found in well-known surveys by Lagarias~\cite{Lagarias_1985,Lagarias_2010}, Wirsching~\cite{Wirsching_2000} and the references therein. Moreover, annotated bibliographies were also given by Lagarias~\cite{Lagarias_2011,Lagarias_2012}. Tao's recent work \cite{Tao_2022} provides a probabilistic (measure-theoretic) point of view by showing that almost all orbits of the Collatz map (in the sense of logarithmic density) attain almost bounded values.

\subsection{Closely Related Work}

Berg and Meinardus \cite{Berg-Meinardus_1994,Berg-Meinardus_1995} studied the generating functions 
$h_k(z) \coloneqq  \sum_{n=0}^{\infty} \psi_k(n) z^n$, $k \in \NN_0$, which are convergent power series for $\abs{z} < 1$. In particular, they showed that these generating functions satisfy the functional recursion 
\begin{equation} \label{eq:Shortened_3n+1_problem_functional_recursion}
h_k(z) = h_{k-1}(z^2) + \frac{1}{3 z^{1/3}} \sum_{l=0}^2 {\zeta^l h_{k-1}(z^{2/3} \zeta^l)} ,  
\end{equation}
for all $k \in \NN$, where $\zeta \coloneqq e^{\iu 2\pi/3}$. According to \cite[Eq.~(20)]{Berg-Meinardus_1995}, equation \eqref{eq:Shortened_3n+1_problem_functional_recursion} can also be written as a contour-integral recursion, i.e., 
\begin{equation} \label{eq:Shortened_3n+1_problem_contour-integral_recursion} 
\begin{split}
h_k(z) & = \frac{1}{2\pi\iu} \cintpos_{\abs{u} = \rho} \frac{(u^2+(u+1)z+z^2)(u-z)}{(u^3-z^2)(u-z^2)} h_{k-1}(u) \rd u   \\
& = \frac{1}{2\pi\iu} \cintpos_{\abs{u} = \rho} \frac{h_{k-1}(u)}{u-z^2} \rd u   + \frac{z}{2\pi\iu} \cintpos_{\abs{u} = \rho} \frac{h_{k-1}(u)}{u^3-z^2} \rd u    ,
\end{split}
\end{equation}  
where the (arbitrary) radius $\rho$ satisfies the inequalities $\abs{z}^{2/3} < \rho < 1$. Moreover, the bivariate generating function $H(z,w) \coloneqq \sum_{k=0}^{\infty} \sum_{n=0}^{\infty}  \psi_k(n) z^n w^k = \sum_{k=0}^{\infty} h_k(z) w^k$, convergent for $\abs{z} < 1$ and $\abs{w} < 2/3$, satisfies the functional equation
\begin{equation} \label{eq:Shortened_3n+1_problem_functional_equation}
H(z,w) = \frac{z}{(1-z)^2} + w \left( H(z^2,w) + \frac{1}{3 z^{1/3}} \sum_{l=0}^2 {\zeta^l H(z^{2/3} \zeta^l,w)}  \right) .
\end{equation}
Note that Eqs.~\eqref{eq:Shortened_3n+1_problem_functional_recursion} and \eqref{eq:Shortened_3n+1_problem_functional_equation} can be obtained from \cite[Eq.~(24)]{Berg-Meinardus_1994} (or, \cite[Eq.~(15)]{Berg-Meinardus_1995}) and \cite[Eq.~(27)]{Berg-Meinardus_1994} (or, \cite[Eq.~(17)]{Berg-Meinardus_1995}), respectively, by applying the variable transformation $z \mapsto z^{1/3}$ therein. Last but not least, Berg and Meinardus proved that the Collatz conjecture is equivalent to the following conjecture \cite[Theorem~5]{Berg-Meinardus_1994}: the only solutions of the homogeneous functional equation 
\begin{equation} \label{eq:Functional_equation_h}
h(z) = h(z^2) + \frac{1}{3 z^{1/3}} \sum_{l=0}^2 {\zeta^l h(z^{2/3} \zeta^l)} 
\end{equation}
that are analytic/holomorphic for $\abs{z} < 1$ are given by $h(z) = c_0 + c_1 \frac{z}{1-z}$, where $c_0, c_1$ are arbitrary complex constants. The functional equation \eqref{eq:Functional_equation_h} can be obtained from~\eqref{eq:Shortened_3n+1_problem_functional_recursion} by replacing $h_k$ and $h_{k-1}$ with $h$; the (unknown) function $h$ can be considered as the limit of $h_k$, as $k \to \infty$, provided that such a limit exists. 

A similar complex-analytic approach was followed by Berg and Opfer~\cite{Berg-Opfer_2013} to investigate the shortened $3n+1$ problem for nonpositive starting values. This problem turns out to be equivalent to the \emph{shortened $3n-1$ problem} with nonnegative starting values, which is generated by the map $\gamma \colon \NN_0 \to \NN_0$ as follows  
\begin{equation} \label{eq:Shortened_3n-1_map}
\gamma(n) \coloneqq 
\begin{cases}
\frac{n}{2} , & \mathrm{if}\ n \equiv 0 \pmod{2}  , \\
\frac{3n-1}{2} , & \mathrm{if}\ n \equiv 1 \pmod{2}  . 
\end{cases}   
\end{equation}
The iterates of this map, $\gamma_k(n)$, are defined analogously to \eqref{eq:Shortened_3n+1_map_iterates}. In this case, the generating functions $\widetilde{h}_k(z) \coloneqq \sum_{n=0}^{\infty} \gamma_k(n) z^n$, $k \in \NN_0$, which are again convergent for $\abs{z} < 1$, satisfy a different functional recursion 
\begin{equation} \label{eq:Shortened_3n-1_problem_functional_recursion}
\widetilde{h}_k(z) = \widetilde{h}_{k-1}(z^2) + \frac{z^{1/3}}{3} \sum_{l=0}^2 {\zeta^{-l} \widetilde{h}_{k-1}(z^{2/3} \zeta^l)}  .
\end{equation} 
In particular, the recursion \eqref{eq:Shortened_3n-1_problem_functional_recursion} results from \cite[Eqs.~(2.3)--(2.4)]{Berg-Opfer_2013} by first replacing $z$ with $z^{1/3}$ in (2.3), and then summing with (2.4); note that the functions $\widetilde{h}_k$ and $\widetilde{h}_{k-1}$ were replaced by the limiting function $h$ therein. 

Furthermore, Siegel~\cite{Siegel_2019,Siegel_PhD_2022,Siegel_2023,Siegel_2024,Siegel_2025} established new functional equations for Collatz-type dynamical systems, which are induced by the so-called ``Hydra map'' (a generalization of the Collatz map). Those equations involve functions that are different from generating functions (power series) on the open unit disk. In particular, his approach relies on complex and $(p,q)$-adic analysis, where $p$ and $q$ are distinct primes, and provides an interesting perspective on the Collatz conjecture.  

Finally, Egorychev's method \cite{Egorychev_1984, Riedel-Mahmoud_2023} is a powerful technique for computing (finite/infinite) combinatorial sums in closed form, thus proving \emph{combinatorial identities}. In particular, the method reduces the considered sum to one/multi-dimensional contour integrals (\emph{integral representation} of the sum) that can be computed by means of \emph{residues}. It is a beautiful and wonderful application of complex analysis (continuous mathematics) in combinatorics (discrete mathematics). The computation algorithm of Egorychev's method \cite[\S 1.1]{Egorychev_1984} includes the following steps: 
\begin{enumerate}[label={\arabic*.}]
	\item Representation of the original expression as a sum of products of combinatorial numbers (e.g., factorials, binomial coefficients and Stirling numbers).
	
	\item Replacement of the combinatorial numbers with their integral representations.
	
	\item Reduction of the products of integrals to multi-dimensional integrals.
	
	\item Interchange of the order of summation and integration (deformation of the integration contour may be necessary to ensure \emph{uniform convergence} of series under the integral sign).
	
	\item Summation of the series in closed form (usually, geometric progressions), thus obtaining an integral representation of the original expression/sum.

	\item Computation of the resulting integral using residue calculus (e.g., iterated integration, variable transformation, and the splitting method with additional/auxiliary variables). 
\end{enumerate}

\subsection{Overview of Main Contributions}

This work provides a new perspective on the analysis of one/multi-dimensional generalized Collatz dynamics. In particular, it develops a unified methodology for proving functional equations and recurrences that are satisfied by their respective generating functions (convergent power series). The fundamental proof techniques are similar to those of Egorychev's method (e.g., contour integrals and residues in one/several complex variables), but now they are used differently in order to establish functional equations/recurrences instead of combinatorial identities. The key results consist of Theorems~\ref{thrm:Theorem_1}~and~\ref{thrm:Theorem_2} for a broad spectrum of arithmetic dynamics in single and multiple dimensions, respectively; see also Definitions~\ref{def:Generalized_Collatz_dynamics}~and~\ref{def:Multi-dimensional_generalized_Collatz_dynamics}. It is emphasized that the purpose of this paper is to generalize the works of Berg, Meinardus and Opfer \cite{Berg-Meinardus_1994,Berg-Meinardus_1995,Berg-Opfer_2013} rather than proving or disproving the Collatz conjecture, which nevertheless remains unsolved.

\subsection{Outline}

The remainder of this paper is organized as follows. Section~\ref{sec:Main_Results} presents the main results with the basic definitions of generalized Collatz dynamics. Afterwards, Section~\ref{sec:Preliminaries_Convergence_of_Multiple_Infinite_Series} deals with convergence issues of multiple infinite series, which play a crucial role in the analysis of power series in several complex variables. Section~\ref{sec:Useful_Lemmas} gives several useful lemmas, and finally Sections~\ref{sec:Proof_Proposition_1}--\ref{sec:Proof_Theorem_2} provide the mathematical proofs of the principal theorems and propositions.

\subsection{Notation}

We write $x \coloneqq y$, or $y \eqqcolon x$, to state that $x$ is by definition equal to $y$ (i.e., $x$ is defined to be equal to $y$). In addition, $\ZZ$, $\RR$ and $\CC$ denote the set of all integers, real and complex numbers, respectively. Calligraphic uppercase letters represent sets. Boldface lowercase (resp., uppercase) letters stand for column-vectors (resp., matrices). Also, $\mathbf{0}$ and $\mathbf{1}$ represent the zero and all-ones vector, respectively (of appropriate dimension). The cardinality of a set is denoted by $\card{\cdot}$, the transpose of a vector/matrix by $[\cdot]^{\top}$, and the degree of a polynomial by $\operatorname{deg}(\cdot)$. The floor function is defined by $\lfloor x \rfloor \coloneqq \max \{k \in \ZZ : k \leq x\}$, for all $x \in \RR$, the absolute value (or, modulus) of the complex number $z$ by $\abs{z}$, the imaginary unit by $\iu \coloneqq \sqrt{-1}$, and the maximum norm of the vector $\mathbf{u} = [u_1,\dots,u_d]^{\top}$ by $\maxnorm{\mathbf{u}} \coloneqq \max \{\abs{u_1},\dots,\abs{u_d}\}$. Some mathematical conventions include: $0^0 \coloneqq 1$, $0! \coloneqq 1$, and $\sum_{j=l}^u \alpha_j \coloneqq 0$, $\prod_{j=l}^u \alpha_j \coloneqq 1$, $\bigcup_{j=l}^u \mathcal{S}_j \coloneqq \varnothing$, $\bigcap_{j=l}^u \mathcal{S}_j \coloneqq \Omega$ (the universal set) whenever $l>u$. The additional notation is summarized in Table~\ref{tbl:List_of_Mathematical_Symbols}, where we assume that $d,d' \in \NN$ (dimensions), $\mathbf{n} = [n_1,\dots,n_d]^{\top} \in \NN_0^d$ (multi-index), $\mathbf{z} = [z_1,\dots,z_d]^{\top} \in \CC^d$ (vector), $f \colon \CC^d \to \CC$ (multivariate function), and $\mathbf{f} \colon \CC^d \to \CC^{d'}$ with $\mathbf{f}(\mathbf{z}) = [f_1(\mathbf{z}),\dots,f_{d'}(\mathbf{z})]^{\top}$ (multivariate vector function).


\renewcommand{\arraystretch}{1.5}
\begin{longtable}{|l|l|}
\caption{List of Mathematical Symbols.}
\label{tbl:List_of_Mathematical_Symbols} \\
\hline 
\multicolumn{1}{|c|}{\textit{Symbol (and its definition)}} & \multicolumn{1}{|c|}{\textit{Description}}  \\
\hline 
$\NN \coloneqq \{1,2,\dots\}$  & the set of all positive integers \\
\hline
$\NN_0 \coloneqq \NN \cup \{0\} = \{0,1,\dots\}$  &  the set of all nonnegative integers \\
\hline
$\RR_{+} \coloneqq \{x \in \RR: x>0\}$  & the set of all positive real numbers \\
\hline
$\DD \coloneqq \{z \in \CC : \abs{z}<1\}$ & the open unit disk in $\CC$  \\   
\hline
$\DD_{*} \coloneqq \DD \setminus\{0\} = \{z \in \CC : 0<\abs{z}<1\}$  &  the punctured open unit disk in $\CC$ \\
\hline
$\prod_{j=1}^d {\mathcal{S}_j} \coloneqq \mathcal{S}_1 \times \cdots \times \mathcal{S}_d$ & \makecell[l]{the Cartesian product of $\mathcal{S}_1,\dots,\mathcal{S}_d$ (if all \\ the sets equal $\mathcal{S}$, then $\mathcal{S}^d \coloneqq \prod_{j=1}^d {\mathcal{S}}$)}  \\
\hline
$\begin{aligned}
\mathcal{D}(\boldsymbol{\rho}) \coloneqq & \, \{\mathbf{z} \in \CC^d : \abs{z_j} < \rho_j, \; j = 1,\dots,d\} \\ 
= &  \, \textstyle\prod_{j=1}^d \mathcal{D}(\rho_j)
\end{aligned}$  &  \makecell[l]{the open poly-disk in $\CC^d$ centered \\ at the origin with poly-radius $\boldsymbol{\rho} \in \RR_{+}^d$ \\ (Cartesian product of $d$ open disks in $\CC$)}   \\
\hline
$\begin{aligned}
\mathcal{C} (\boldsymbol{\rho}) \coloneqq & \, \{\mathbf{z} \in \CC^d : \abs{z_j} = \rho_j, \; j = 1,\dots,d\} \\
= &  \, \textstyle\prod_{j=1}^d \mathcal{C}(\rho_j)
\end{aligned}$  &  \makecell[l]{the poly-circle in $\CC^d$ centered at \\ the origin with poly-radius $\boldsymbol{\rho} \in \RR_{+}^d$ \\ (Cartesian product of $d$ circles in $\CC$)}  \\
\hline
$\begin{aligned}
\overline{\mathcal{D}} (\boldsymbol{\rho}) \coloneqq & \, \{\mathbf{z} \in \CC^d : \abs{z_j} \leq \rho_j, \; j = 1,\dots,d\}  \\ 
= &  \, \mathcal{D} (\boldsymbol{\rho}) \cup \mathcal{C} (\boldsymbol{\rho}) = \textstyle\prod_{j=1}^d \overline{\mathcal{D}}(\rho_j)
\end{aligned}$  & the closure of the open poly-disk $\mathcal{D}(\boldsymbol{\rho})$    \\
\hline
$k \bmod m \coloneqq k - m{\lfloor k/m \rfloor}$   &  \makecell[l]{the remainder of the division of $k \in \ZZ$ by \\ $m \in \NN$, with $k \bmod m \in \{0,\dots,m-1\}$ \\ ($\lfloor k/m \rfloor$ is the quotient of the division)}  \\
\hline
\makecell[l]{$k \equiv \ell \pmod{m} \iff$ \\ $k \bmod m = \ell \bmod m$}  &  \makecell[l]{$k$ is congruent to $\ell$ modulo $m$, \\ where $k,\ell \in \ZZ$ and $m \in \NN$} \\ 
\hline
\makecell[l]{$\mathbf{k} \equiv \boldsymbol{\ell} \pmod{\mathbf{m}} \iff$ \\ $k_j \equiv \ell_j \pmod{m_j},\ \forall j \in \{1,\dots,d\}$}  &  \makecell[l]{component-wise modulo congruence, \\ where $\mathbf{k},\boldsymbol{\ell} \in \ZZ^d$ and $\mathbf{m} \in \NN^d$} \\
\hline
$\begin{aligned}
\mathcal{R}(\mathbf{m}) \coloneqq &  \, \textstyle\prod_{j=1}^d \{0,\dots,m_j-1\}  \\
= &  \, \textstyle\prod_{j=1}^d \mathcal{R}(m_j)
\end{aligned}$  &  \makecell[l]{the set of all (vector) remainders resulted \\ from the component-wise modulo $\mathbf{m} \in \NN^d$}  \\
\hline
$\mathbf{n}! \coloneqq \prod_{j=1}^d ({n_j}!) = n_1! \cdots n_d!$  & the (generalized) factorial of $\mathbf{n}$  \\
\hline
${\lVert \mathbf{n} \rVert} \coloneqq \sum_{j=1}^d \abs{n_j} = \sum_{j=1}^d n_j$ & the $1$-norm of the multi-index $\mathbf{n}$ \\
\hline
$\mathbf{z}_{-j} \coloneqq [z_1,\dots,z_{j-1},z_{j+1},\dots,z_d]^{\top}$  &  \makecell[l]{the vector generated from $\mathbf{z}$ by deleting \\ its $j^{\text{th}}$ component, where $j \in \{1,\dots,d\}$}   \\
\hline
$\mathbf{z}^{\mathbf{n}} \coloneqq \prod_{j=1}^{d} {z_j^{n_j}} = z_1^{n_1} \cdots z_d^{n_d}$ & the monomial produced by $\mathbf{z}$ and $\mathbf{n}$ \\
\hline
$\mathrm{d} \mathbf{z} \coloneqq \prod_{j=1}^d \mathrm{d} z_j = \mathrm{d} z_1 \cdots \mathrm{d} z_d$  &  the (multiple) differential of $\mathbf{z}$  \\
\hline
$f^{(\mathbf{n})}(\mathbf{v}) \coloneqq \left. \frac{\partial^{\lVert \mathbf{n} \rVert} f(\mathbf{z})}{\partial z_1^{n_1} \cdots \partial z_d^{n_d}} \right\rvert_{\mathbf{z} = \mathbf{v}}$   &   \makecell[l]{the $\mathbf{n}^{\text{th}}$-order (complex) partial derivative \\ of $f(\mathbf{z})$ at $\mathbf{z} = \mathbf{v}$, with $f^{(\mathbf{0})}(\mathbf{v}) \coloneqq f(\mathbf{v})$} \\
\hline
$\mathbf{f}^{(\mathbf{n})}(\mathbf{v}) \coloneqq [f_1^{(\mathbf{n})}(\mathbf{v}),\dots,f_{d'}^{(\mathbf{n})}(\mathbf{v})]^{\top}$   &  \makecell[l]{the $\mathbf{n}^{\text{th}}$-order partial derivative of $\mathbf{f}(\mathbf{z})$ \\ at $\mathbf{z} = \mathbf{v}$, with $\mathbf{f}^{(\mathbf{0})}(\mathbf{v}) \coloneqq \mathbf{f}(\mathbf{v})$} \\
\hline
$\cintpos f(\mathbf{z}) \rd \mathbf{z} \coloneqq \cintpos \cdots \cintpos f(\mathbf{z}) \rd z_1 \cdots \rd z_d $  &  \makecell[l]{contour integral of $f$ with positive \\ (counterclockwise) orientation} \\
\hline
$\cintpos \mathbf{f}(\mathbf{z}) \rd \mathbf{z} \coloneqq [\cintpos f_1(\mathbf{z}) \rd \mathbf{z},\dots,\cintpos f_{d'}(\mathbf{z}) \rd \mathbf{z}]^{\top}$  &  component-wise contour integration of $\mathbf{f}$   \\
\hline
$\iverson{P} \coloneqq 
\begin{cases}   
1, & \text{if the statement}\ P\ \text{is true} , \\
0, & \text{if the statement}\ P\ \text{is false} .
\end{cases}$ & the Iverson bracket  \\
\hline
\makecell[l]{$\mathbf{z} = {\mathbf{x} \odot \mathbf{y}} \: \left( = \mathbf{y} \odot \mathbf{x} \right) \iff$ \\ $z_j = x_j y_j, \ \forall j \in \{1,\dots,d\}$}  &  \makecell[l]{the Hadamard/component-wise product \\ of the $d$-dimensional vectors $\mathbf{x}$ and $\mathbf{y}$} \\
\hline 
\end{longtable}

\section{Main Results} \label{sec:Main_Results}

In this section, we present the principal definitions and give the main theorems. Note that the following definitions of generalized Collatz dynamics are similar to (although, slightly different from) those of one/multi-dimensional ``Hydra maps'' used in Siegel's works \cite{Siegel_2019,Siegel_PhD_2022,Siegel_2024}.

\subsection{One-Dimensional Arithmetic Dynamics}

\begin{definition}[Generalized Collatz dynamics] \label{def:Generalized_Collatz_dynamics}
Let $m \in \NN$, $\mathbf{a} = [a_0,\dots,a_{m-1}]^{\top}$, and \linebreak $\mathbf{b} = [b_0,\dots,b_{m-1}]^{\top}$. The \emph{$(m,\mathbf{a},\mathbf{b})$-Collatz map} $t \colon \NN_0 \to \NN_0$ is defined by 
\begin{equation}  \label{eq:Generalized_Collatz_map}
t(n) \coloneqq \begin{cases}
      a_0 n + b_0, & \mathrm{if}\ n \equiv 0 \pmod{m} , \\
      a_1 n + b_1, & \mathrm{if}\ n \equiv 1 \pmod{m} , \\
      \vdots & \vdots \\
      a_{m-1} n + b_{m-1}, & \mathrm{if}\ n \equiv m-1 \pmod{m} , 
\end{cases}      
\end{equation}
whenever the vectors $\mathbf{a}$ and $\mathbf{b}$ satisfy the following conditions:
\begin{enumerate}[start=1,label={(C1.\alph*)},leftmargin=1.15cm] 
	\item \label{cond:1.a} $\lambda_r \coloneqq {a_r m} \in \NN$, and   

	\item \label{cond:1.b} $\mu_r \coloneqq (a_r r + b_r) \in \NN_0$,   
\end{enumerate}
for all $r \in \mathcal{R}(m) \coloneqq \{0,\dots,m-1\}$. The induced \emph{$(m,\mathbf{a},\mathbf{b})$-Collatz dynamics} is defined recursively by the iterates of this map, i.e., 
\begin{equation} \label{eq:Generalized_Collatz_dynamics}
t_k(n) \coloneqq t(t_{k-1}(n))= t_{k-1}(t(n)) ,  \quad  \forall k \in \mathbb{N} ,
\end{equation}
with initial condition $t_0(n) \coloneqq n$.   
\end{definition}

\begin{remark}
By using the Iverson bracket, Eq.~\eqref{eq:Generalized_Collatz_map} can be written as 
\begin{equation*}
t(n) = \sum_{r=0}^{m-1} (a_r n + b_r) \iverson{n \equiv r \pmod{m}} .
\end{equation*}
\end{remark}

\begin{remark}
If $n \in \NN_0$, then $t(n) \in \NN_0$. In particular, given an $n \in \NN_0$ and an $r \in \mathcal{R}(m)$, we have $n \equiv r \pmod{m} \iff n = q m + r$, where $q = \lfloor n/m \rfloor \in \NN_0$. Therefore, if $n \in \NN_0$ and \linebreak $n \equiv r \pmod{m}$, then $t(n) = a_r n + b_r = q(a_r m) + (a_r r + b_r) = {q \lambda_r + \mu_r}  \in \NN_0$, since the conditions \ref{cond:1.a} and \ref{cond:1.b} are satisfied.  
\end{remark}

Next, we can show that the power series $\{\sum_{n=0}^{\infty} t_k(n) z^n\}_{k \in \NN_0}$ and $\sum_{k=0}^{\infty} \sum_{n=0}^{\infty} t_k(n) z^n w^k$ are convergent for sufficiently small $\abs{z}$ and $\abs{w}$, thus having nonzero radii of convergence.    

\begin{proposition} \label{prop:Unconditional_uniform_and_pointwise_convergence_of_GFs}
Let $\rho_z, \rho_w \in \RR_+$ such that $\rho_z < 1$ and $\rho_w < R_w$, where $R_w \coloneqq \min \{ \maxnorm{\mathbf{a}}^{-1},1 \} \linebreak > 0$.\footnote{Note that $\maxnorm{\mathbf{a}} \neq 0$ (equivalently, $\maxnorm{\mathbf{a}} > 0$) due to condition~\ref{cond:1.a}.} Then, the power series $\sum_{n=0}^{\infty} t_k(n) z^n$ is unconditionally uniformly convergent on the disk $\overline{\mathcal{D}}(\rho_z)$ and unconditionally pointwise convergent on the open unit disk $\DD$, for all $k \in \NN_0$. Moreover, the power series $\sum_{k=0}^{\infty} \sum_{n=0}^{\infty}  t_k(n) z^n w^k$ is unconditionally uniformly convergent on the poly-disk $\overline{\mathcal{D}}(\rho_z,\rho_w)$ and unconditionally pointwise convergent on the open poly-disk $\mathcal{D}(1,R_w) = \DD \times {\mathcal{D}(R_w)}$. 
\end{proposition}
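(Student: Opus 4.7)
The plan is to reduce both convergence claims to elementary linear bounds on the iterates $t_k(n)$ and then apply the Weierstrass M-test.

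First, I would establish the one-step bound
\[
t(n) \leq A\, n + M, \qquad \forall n \in \NN_0,
\]
with $A \coloneqq \maxnorm{\mathbf{a}}$ and $M \coloneqq \max_{r \in \mathcal{R}(m)} \mu_r \in \NN_0$. This is immediate from the decomposition $n = qm + r$ with $q = \lfloor n/m \rfloor$ noted in the remark after Definition~\ref{def:Generalized_Collatz_dynamics}, since $t(n) = q\lambda_r + \mu_r \leq (mA)\, q + M \leq A\, n + M$. A straightforward induction on $k$ then gives
\[
t_k(n) \leq A^k\, n + C_k, \qquad C_k \coloneqq M \sum_{j=0}^{k-1} A^j,
\]
so that $t_k(n)$ grows at most linearly in $n$ for every fixed $k$.

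Second, I would feed this bound into the Weierstrass M-test. For $\rho_z \in (0,1)$ and $z \in \overline{\mathcal{D}}(\rho_z)$ one has
\[
\abs{t_k(n)\, z^n} \leq (A^k\, n + C_k)\, \rho_z^n,
\]
and the majorant $\sum_{n=0}^{\infty}(A^k\, n + C_k)\, \rho_z^n = A^k\,\rho_z/(1-\rho_z)^2 + C_k/(1-\rho_z)$ is finite. This gives unconditional (absolute) uniform convergence of $\sum_{n=0}^{\infty} t_k(n)\, z^n$ on $\overline{\mathcal{D}}(\rho_z)$; pointwise convergence on $\DD$ follows by choosing, for each $z \in \DD$, any $\rho_z \in (\abs{z},1)$.

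For the bivariate series, the same M-test applied on $\overline{\mathcal{D}}(\rho_z,\rho_w)$ produces the double majorant
\[
\sum_{k=0}^{\infty} \sum_{n=0}^{\infty} (A^k\, n + C_k)\, \rho_z^n \rho_w^k = \frac{\rho_z}{(1-\rho_z)^2}\sum_{k=0}^{\infty}(A\rho_w)^k + \frac{1}{1-\rho_z}\sum_{k=0}^{\infty} C_k\, \rho_w^k,
\]
which I would analyze case-wise in $A$. The first series converges iff $A\rho_w < 1$. For the second, the closed form of $C_k$ yields $C_k \leq M A^k/(A-1)$, $C_k = M k$, and $C_k \leq M/(1-A)$ when $A>1$, $A=1$, $A<1$, respectively, so both sums are finite precisely when $\rho_w < R_w = \min\{1/A,\,1\}$. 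An absolutely convergent double series then gives the desired uniform convergence on $\overline{\mathcal{D}}(\rho_z,\rho_w)$ and, via the usual trick of inserting slightly larger poly-radii, pointwise convergence on $\mathcal{D}(1,R_w)$. The only non-mechanical point, and hence the main (though mild) obstacle, is motivating the specific form of $R_w$: the multiplicative factor $A^k$ produced by iteration forces the constraint $\rho_w < 1/A$, while the additive accumulation $C_k$ grows linearly in $k$ (when $A=1$) or is bounded (when $A<1$), independently forcing $\rho_w < 1$; taking the minimum of the two gives $R_w$ exactly as defined, and no analytic difficulty beyond elementary geometric-series estimates arises.
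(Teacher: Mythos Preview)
Your proposal is correct and follows essentially the same approach as the paper: establish by induction the bound $t_k(n) \leq A^k n + (\text{constant})\sum_{j=0}^{k-1}A^j$ and then feed it into the Weierstrass M-test with the same case analysis on $A$. The only cosmetic differences are that the paper uses $\maxnorm{\mathbf{b}}$ rather than your $M=\max_r\mu_r$ as the additive constant in the one-step estimate, and it packages the M-test step into its Proposition~\ref{prop:Sufficient_condition_for_unconditional_uniform_and_pointwise_convergence_of_multiple_power_series} instead of computing the majorant sums explicitly as you do.
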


\begin{proof}
See Section~\ref{sec:Proof_Proposition_1}. Note that Section~\ref{sec:Preliminaries_Convergence_of_Multiple_Infinite_Series} is a prerequisite, since it contains the necessary definitions and results on the convergence of multiple infinite series. 
\end{proof}

In view of Proposition~\ref{prop:Unconditional_uniform_and_pointwise_convergence_of_GFs}, we can define the generating functions $f_k \colon \DD \to \CC$, for all $k \in \NN_0$, and $F \colon {\DD \times \mathcal{D}(R_w)} \to \CC$ as follows 
\begin{equation} \label{eq:Generating_function_f_k}
f_k(z) \coloneqq \sum_{n=0}^{\infty} t_k(n) z^n , 
\end{equation}
\begin{equation} \label{eq:Generating_function_F}
F(z,w) \coloneqq \sum_{k=0}^{\infty} \sum_{n=0}^{\infty} t_k(n) z^n w^k = \sum_{k=0}^{\infty} f_k(z) w^k  .
\end{equation}
Note that all these generating functions are \emph{continuous} and \emph{holomorphic} on their respective (open) domains, because of Proposition~\ref{prop:Unconditional_uniform_and_pointwise_convergence_of_GFs} and Lemma~\ref{lem:Cauchy_integral_formulas}.

\begin{theorem} \label{thrm:Theorem_1}
Given an $(m,\mathbf{a},\mathbf{b})$-Collatz dynamics, according to Definition~\ref{def:Generalized_Collatz_dynamics}, we have the following statements:
\begin{enumerate}[start=1,label={(S1.\alph*)},leftmargin=1.15cm] 
	\item \label{statement:1.a} Let $z \in \DD_{*} \coloneqq \DD \setminus\{0\}$. Then, there exist unique complex numbers $\{\sigma_{r,l}(z)\}$ and $\{\tau_{r,l}(z)\}$ such that  
\begin{equation} \label{eq:Partial_fraction_decomposition_wrt_u}
\begin{split}
\frac{1}{u^{\mu_r - \lambda_r + 1} (u^{\lambda_r}-z^m)} & = \frac{1}{u^{\mu_r - \lambda_r + 1} \prod_{l=0}^{\lambda_r-1} (u - \phi_{r,l}(z))}   \\
& = \sum_{l=0}^{\mu_r - \lambda_r} {\frac{\sigma_{r,l}(z)}{u^{l+1}}} + \sum_{l=0}^{\lambda_r-1} {\frac{\tau_{r,l}(z)}{u - \phi_{r,l}(z)}}  , 
\end{split}   
\end{equation}
where 
\begin{equation} \label{eq:phi_rj}
\phi_{r,l}(z) \coloneqq z^{m/\lambda_r} e^{\iu 2\pi l / \lambda_r} = z^{a_r^{-1}} \zeta_r^l     
\end{equation}
and $\zeta_r \coloneqq e^{\iu 2\pi / \lambda_r}$, for all $r \in {\mathcal{R}(m)}$ and $l \in \{0,\dots,\lambda_r-1\}$.\footnote{In particular, given any $z \neq 0$ and $r \in {\mathcal{R}(m)}$, the set of all complex solutions of the polynomial equation $u^{\lambda_r} = z^m$ with respect to $u$ is given by $\{\phi_{r,0}(z),\dots,\phi_{r,\lambda_r-1}(z)\}$. Note that $\lambda_r \neq 0$ because of condition \ref{cond:1.a}, thus the $\{\phi_{r,l}(z)\}$ in \eqref{eq:phi_rj} are well defined.} Eq.~\eqref{eq:Partial_fraction_decomposition_wrt_u} is essentially the partial fraction decomposition of its left-hand-side (rational) expression with respect to the complex variable $u$.\footnote{The partial fraction decomposition in \eqref{eq:Partial_fraction_decomposition_wrt_u} is valid even when $\mu_r - \lambda_r < 0 \iff \mu_r - \lambda_r \leq -1$; in this case, the first sum vanishes (i.e., becomes zero) by convention. In addition, the $z$-dependent complex coefficients $\{\sigma_{r,l}(z)\}$ and $\{\tau_{r,l}(z)\}$ can be efficiently computed using computer algebra (symbolic computation).}  
	
	\item \label{statement:1.b} Furthermore, the generating functions $f_k(z)$ defined in \eqref{eq:Generating_function_f_k} satisfy the contour-integral recursion 
\begin{equation} \label{eq:Contour-integral_recursion_f_k}
f_k(z) =  \sum_{r=0}^{m-1} \frac{z^r}{2\pi\iu} \cintpos_{\mathcal{C}(\rho)} \frac{f_{k-1}(u)}{u^{\mu_r - \lambda_r + 1} (u^{\lambda_r}-z^m)}   \rd u   ,
\end{equation}
for any radius $\rho \in \RR_{+}$ such that $\abs{z}^{\maxnorm{\mathbf{a}}^{-1}} < \rho < 1$, as well as the functional recurrence 
\begin{equation} \label{eq:Functional_recurrence_f_k}
f_k(z) = \sum_{r=0}^{m-1} z^r \left( \sum_{l=0}^{\mu_r - \lambda_r} { \frac{\sigma_{r,l}(z)}{l!} f_{k-1}^{(l)}(0) } + \sum_{l=0}^{\lambda_r-1} {{\tau_{r,l}(z)} f_{k-1}(\phi_{r,l}(z))} \right)     ,
\end{equation}
for all $z \in \DD_{*}$ and $k \in \NN$. The initial condition is $f_0(z) = \frac{z}{(1-z)^2}$.\footnote{Eq.~\eqref{eq:Functional_recurrence_f_k} can also be written as 
\begin{equation*}
f_k(z) = \sum_{r=0}^{m-1} z^r \left( \sum_{l=0}^{\mu_r - \lambda_r} {{\sigma_{r,l}(z) \, t_{k-1}(l)}} + \sum_{l=0}^{\lambda_r-1} {{\tau_{r,l}(z)} f_{k-1}(\phi_{r,l}(z))} \right)     , 
\end{equation*}
because \eqref{eq:Coefficients_a_n} yields $t_{k-1}(l) = \frac{f_{k-1}^{(l)}(0)}{l!}$ for all $l \in \NN_0$.} 

	
	\item \label{statement:1.c} Finally, the generating function $F(z,w)$ defined in \eqref{eq:Generating_function_F} satisfies the functional equation 
\begin{equation} \label{eq:Functional_equation_F}
F(z,w) = f_0(z) + w \sum_{r=0}^{m-1} z^r \left( \sum_{l=0}^{\mu_r - \lambda_r} {{\frac{\sigma_{r,l}(z)}{l!} F_{*}^{(l)}(0,w)}} + \sum_{l=0}^{\lambda_r-1} {{\tau_{r,l}(z)} F(\phi_{r,l}(z),w)} \right)     ,
\end{equation}
for all $(z,w) \in {\DD_{*} \times \mathcal{D}(R_w)}$, where $R_w$ is defined in Proposition~\ref{prop:Unconditional_uniform_and_pointwise_convergence_of_GFs} and $F_{*}^{(l)}(0,w) \coloneqq \left. \frac{\partial^{l} F(z,w)}{\partial z^l} \right\rvert_{z=0}$. 

\end{enumerate} 
\end{theorem}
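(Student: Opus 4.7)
The three statements are proved in sequence. Part~\ref{statement:1.a} is a standard algebraic fact, part~\ref{statement:1.b} is obtained by an Egorychev-style manipulation that converts the power-series coefficient $t_{k-1}(\cdot)$ of $f_{k-1}$ into a contour integral, and part~\ref{statement:1.c} follows by multiplying the $k$-recurrence by $w^k$ and summing.

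For~\ref{statement:1.a}, note that for $z \in \DD_{*}$ the $\lambda_r$ numbers $\phi_{r,l}(z) = z^{1/a_r} \zeta_r^l$ are nonzero and pairwise distinct, so $u^{\lambda_r} - z^m = \prod_{l=0}^{\lambda_r - 1}(u - \phi_{r,l}(z))$. The left-hand side of \eqref{eq:Partial_fraction_decomposition_wrt_u} is thus a proper rational function in $u$ with a pole of order at most $\mu_r - \lambda_r + 1$ at $u=0$ (absent when this integer is $\leq 0$, consistent with the empty-sum convention) together with simple poles at each $\phi_{r,l}(z)$; the classical partial-fraction theorem over $\CC$ supplies the decomposition with uniquely determined coefficients $\sigma_{r,l}(z),\tau_{r,l}(z)$.

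For~\ref{statement:1.b}, using $t_k(n) = t_{k-1}(t(n))$ from \eqref{eq:Generalized_Collatz_dynamics} and splitting the index $n$ by its residue modulo $m$ via $n = qm + r$ with $q \in \NN_0$ and $r \in \mathcal{R}(m)$, the identity $t(qm+r) = q\lambda_r + \mu_r$ from Definition~\ref{def:Generalized_Collatz_dynamics} rewrites
\begin{equation*}
f_k(z) = \sum_{r=0}^{m-1} z^r \sum_{q=0}^{\infty} t_{k-1}(q\lambda_r + \mu_r)\, z^{qm}.
\end{equation*}
Each inner coefficient is then replaced by the Cauchy representation $t_{k-1}(N) = \frac{1}{2\pi\iu}\cintpos_{\mathcal{C}(\rho)} \frac{f_{k-1}(u)}{u^{N+1}} \rd u$, valid for any $\rho \in (0,1)$. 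Interchanging the $q$-summation with the $u$-integration is legitimate precisely when the geometric series $\sum_q (z^m/u^{\lambda_r})^q$ converges uniformly on $\mathcal{C}(\rho)$ for every $r \in \mathcal{R}(m)$, which forces $\rho^{\lambda_r} > \abs{z}^m$ simultaneously in $r$, equivalently $\rho > \abs{z}^{\maxnorm{\mathbf{a}}^{-1}}$. Summing the geometric series and collecting powers of $u$ produces the integrand $f_{k-1}(u) / [u^{\mu_r - \lambda_r + 1}(u^{\lambda_r} - z^m)]$, which is \eqref{eq:Contour-integral_recursion_f_k}. Substituting the decomposition \eqref{eq:Partial_fraction_decomposition_wrt_u} under the integral and applying Cauchy's formulas termwise -- each $1/u^{l+1}$ contributing $f_{k-1}^{(l)}(0)/l!$ and each $1/(u - \phi_{r,l}(z))$ contributing $f_{k-1}(\phi_{r,l}(z))$ (since $\abs{\phi_{r,l}(z)} = \abs{z}^{1/a_r} < \rho$) -- yields \eqref{eq:Functional_recurrence_f_k}. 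The initial condition $f_0(z) = z/(1-z)^2$ is immediate from $t_0(n) = n$.

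For~\ref{statement:1.c}, I would multiply \eqref{eq:Functional_recurrence_f_k} by $w^k$ and sum over $k \geq 1$, so the left side becomes $F(z,w) - f_0(z)$. The exchange $\sum_{k \geq 1} f_{k-1}(\phi_{r,l}(z)) w^k = w F(\phi_{r,l}(z),w)$ is immediate from \eqref{eq:Generating_function_F} since $\phi_{r,l}(z) \in \DD$. The more delicate exchange is $\sum_{k \geq 1} f_{k-1}^{(l)}(0) w^k = w F_{*}^{(l)}(0,w)$, which requires commuting the $k$-summation with an $l$-fold $z$-derivative evaluated at $0$; this is the main technical hurdle, and I would discharge it using the uniform convergence of $F(z,w) = \sum_k f_k(z) w^k$ on closed subpolydisks of $\mathcal{D}(1,R_w)$ guaranteed by Proposition~\ref{prop:Unconditional_uniform_and_pointwise_convergence_of_GFs}, combined with a Cauchy integral representation of the $l$-th $z$-derivative at $0$ to certify that termwise differentiation is licit. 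After an index shift $k \mapsto k-1$, this produces \eqref{eq:Functional_equation_F}.
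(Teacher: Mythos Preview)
Your proposal is correct and follows essentially the same approach as the paper's proof. The only cosmetic difference is in part~\ref{statement:1.b}: you invoke the Cauchy coefficient formula $t_{k-1}(N)=\tfrac{1}{2\pi\iu}\cintpos_{\mathcal{C}(\rho)} f_{k-1}(u)\,u^{-N-1}\,\rd u$ directly, whereas the paper reaches the same integral by first reinserting an $n$-sum with an Iverson bracket $\iverson{n=q\lambda_r+\mu_r}$ and then using the Kronecker-delta integral representation $\iverson{n=N}=\tfrac{1}{2\pi\iu}\cintpos_{\mathcal{C}(\rho)} u^{n-N-1}\,\rd u$ before summing over $n$ to recover $f_{k-1}(u)$; this is the same computation unfolded one step further.
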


\begin{proof}
See Section~\ref{sec:Proof_Theorem_1}. Note that Sections~\ref{sec:Preliminaries_Convergence_of_Multiple_Infinite_Series}~and~\ref{sec:Useful_Lemmas} are prerequisites.  
\end{proof}

\begin{corollary} \label{cor:Reduction_f_k_and_F}
If $\mu_r - \lambda_r \leq -1$ for all $r \in \mathcal{R}(m)$, then \eqref{eq:Functional_recurrence_f_k}~and~\eqref{eq:Functional_equation_F} reduce to the following equations: 
\begin{equation*}
f_k(z) = \sum_{r=0}^{m-1} z^r \, {\sum_{l=0}^{\lambda_r-1} {{\tau_{r,l}(z)} f_{k-1}(\phi_{r,l}(z))}} ,
\end{equation*}
\begin{equation*}
F(z,w) = f_0(z) + w \sum_{r=0}^{m-1} z^r \, {\sum_{l=0}^{\lambda_r-1} {{\tau_{r,l}(z)} F(\phi_{r,l}(z),w)}} .
\end{equation*}
\end{corollary}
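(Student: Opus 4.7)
The proof will be essentially immediate once one unpacks the summation convention. The hypothesis $\mu_r - \lambda_r \leq -1$ says exactly that the upper index $\mu_r - \lambda_r$ of the first inner sum in \eqref{eq:Functional_recurrence_f_k} (and, similarly, in \eqref{eq:Functional_equation_F}) is strictly less than the lower index $0$. By the convention adopted in the Notation section, namely $\sum_{j=l}^{u} \alpha_j \coloneqq 0$ whenever $l > u$, each such inner sum is the empty sum and therefore vanishes.

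Concretely, the plan is as follows. First, I would invoke Theorem~\ref{thrm:Theorem_1}\ref{statement:1.b}--\ref{statement:1.c}, which provides \eqref{eq:Functional_recurrence_f_k} and \eqref{eq:Functional_equation_F} under no further restriction on the signs of the differences $\mu_r - \lambda_r$. Then, applying the blanket assumption $\mu_r - \lambda_r \leq -1$ for every $r \in \mathcal{R}(m)$, I would note that for each such $r$,
\begin{equation*}
\sum_{l=0}^{\mu_r - \lambda_r} \frac{\sigma_{r,l}(z)}{l!} f_{k-1}^{(l)}(0) = 0 \quad \text{and} \quad \sum_{l=0}^{\mu_r - \lambda_r} \frac{\sigma_{r,l}(z)}{l!} F_{*}^{(l)}(0,w) = 0,
\end{equation*}
by the empty-sum convention. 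Substituting these zeros into \eqref{eq:Functional_recurrence_f_k} and \eqref{eq:Functional_equation_F} respectively, the first double sum disappears entirely, leaving exactly the two reduced equations stated in the corollary.

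There is essentially no obstacle here: this corollary is a direct specialization of Theorem~\ref{thrm:Theorem_1}, and its content is the same observation already flagged in the footnote to equation~\eqref{eq:Partial_fraction_decomposition_wrt_u} (where it is noted that the partial fraction decomposition remains valid in the regime $\mu_r - \lambda_r \leq -1$ with the first sum interpreted as zero). The only thing worth remarking on is conceptual rather than technical: the condition $\mu_r - \lambda_r \leq -1$ is precisely the case in which the rational function $1/[u^{\mu_r - \lambda_r + 1}(u^{\lambda_r} - z^m)]$ has no pole at $u = 0$, so no contribution from derivatives of $f_{k-1}$ (or $F$) at the origin survives the residue computation underlying Theorem~\ref{thrm:Theorem_1}. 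Thus the proof is a one-line appeal to the empty-sum convention applied to \eqref{eq:Functional_recurrence_f_k} and \eqref{eq:Functional_equation_F}.
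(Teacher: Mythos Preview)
Your proposal is correct and matches the paper's treatment: the paper does not even supply a proof for this corollary, implicitly regarding it as an immediate consequence of Theorem~\ref{thrm:Theorem_1} together with the empty-sum convention, exactly as you argue.
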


Subsequently, we present applications of Theorem~\ref{thrm:Theorem_1} to well-known arithmetic dynamics, namely, the shortened $3n+1$ and $3n-1$ problems. 

\begin{example}
The \emph{shortened $3n+1$ map} defined in \eqref{eq:Shortened_3n+1_map}: $m=2$, $\mathbf{a}=\left[\frac{1}{2},\frac{3}{2}\right]^{\top}$, $\mathbf{b}=\left[0,\frac{1}{2}\right]^{\top}$ $\implies$ $\boldsymbol{\lambda} = [1,3]^{\top}$, $\boldsymbol{\mu} = [0,2]^{\top}$. Based on \eqref{eq:Partial_fraction_decomposition_wrt_u} and \eqref{eq:phi_rj}, the partial fraction decomposition for $r = 0$, with $\phi_{0,0}(z) = z^2$, is given by 
\begin{equation*}
\frac{1}{u^{\mu_0 - \lambda_0 + 1} (u^{\lambda_0}-z^2)} = \frac{1}{u - z^2} = {\frac{\tau_{0,0}(z)}{u - \phi_{0,0}(z)}}  ,
\end{equation*}
where $\tau_{0,0}(z) = 1$. In a similar way, the partial fraction decomposition for $r = 1$, with $\phi_{1,l}(z) = z^{2/3} \zeta_1^l$, $l \in \{0,1,2\}$, and $\zeta_1 = e^{\iu 2\pi/3}$, is expressed as 
\begin{equation*}
\frac{1}{u^{\mu_1 - \lambda_1 + 1} (u^{\lambda_1}-z^2)} = \frac{1}{u^3 - z^2} = \sum_{l=0}^{2} {\frac{\tau_{1,l}(z)}{u - \phi_{1,l}(z)}}   ,
\end{equation*}
where $\tau_{1,l}(z) = \frac{\zeta_1^l}{3 z^{4/3}}$, for all $l \in \{0,1,2\}$.

Therefore, from \eqref{eq:Contour-integral_recursion_f_k}, we have the contour-integral recursion
\begin{equation*}
f_k(z) =  \frac{1}{2\pi\iu} \cintpos_{\mathcal{C}(\rho)} \frac{f_{k-1}(u)}{u-z^2} \rd u  + \frac{z}{2\pi\iu} \cintpos_{\mathcal{C}(\rho)} \frac{f_{k-1}(u)}{u^3-z^2} \rd u  , 
\end{equation*}
for any radius $\rho \in \RR_{+}$ such that $\abs{z}^{2/3} = \abs{z}^{\maxnorm{\mathbf{a}}^{-1}} < \rho < 1$. From \eqref{eq:Functional_recurrence_f_k} and \eqref{eq:Functional_equation_F}, or Corollary~\ref{cor:Reduction_f_k_and_F}, we also get 
\begin{equation*}
f_k(z) = f_{k-1}(z^2) + \frac{1}{3 z^{1/3}} \sum_{l=0}^2 {\zeta_1^l f_{k-1}(z^{2/3} \zeta_1^l)}  ,
\end{equation*}	
\begin{equation*}
F(z,w) = \frac{z}{(1-z)^2} + w \left( F(z^2,w) + \frac{1}{3 z^{1/3}} \sum_{l=0}^2 {\zeta_1^l F(z^{2/3} \zeta_1^l,w)}  \right) .
\end{equation*}
The last three expressions are in agreement with \eqref{eq:Shortened_3n+1_problem_functional_recursion}, \eqref{eq:Shortened_3n+1_problem_contour-integral_recursion} and \eqref{eq:Shortened_3n+1_problem_functional_equation}, respectively, which have been proved by Berg and Meinardus 
\cite{Berg-Meinardus_1994,Berg-Meinardus_1995}.
\end{example}

\begin{example}
The \emph{shortened $3n-1$ map} defined in \eqref{eq:Shortened_3n-1_map}: $m=2$, $\mathbf{a}=\left[\frac{1}{2},\frac{3}{2}\right]^{\top}$, $\mathbf{b}=\left[0,-\frac{1}{2}\right]^{\top}$ $\implies$ $\boldsymbol{\lambda} = [1,3]^{\top}$, $\boldsymbol{\mu} = [0,1]^{\top}$. Based on \eqref{eq:Partial_fraction_decomposition_wrt_u} and \eqref{eq:phi_rj}, the partial fraction decomposition for $r = 0$ is exactly the same with that of the shortened $3n+1$ problem. However, the partial fraction decomposition for $r = 1$, with $\phi_{1,l}(z) = z^{2/3} \zeta_1^l$, $l \in \{0,1,2\}$, and $\zeta_1 = e^{\iu 2\pi/3}$, is given by 	
\begin{equation*}
\frac{1}{u^{\mu_1 - \lambda_1 + 1} (u^{\lambda_1}-z^2)} = \frac{u}{u^3 - z^2} = \sum_{l=0}^{2} {\frac{\tau_{1,l}(z)}{u - \phi_{1,l}(z)}}   ,
\end{equation*}
where $\tau_{1,l}(z) = \frac{\zeta_1^{2l}}{3 z^{2/3}} = \frac{\zeta_1^{-l}}{3 z^{2/3}}$ (since $\zeta_1^2 = \zeta_1^{-1}$), for all $l \in \{0,1,2\}$.	

Hence, from \eqref{eq:Contour-integral_recursion_f_k}, we have the contour-integral recursion
\begin{equation*}
f_k(z) =  \frac{1}{2\pi\iu} \cintpos_{\mathcal{C}(\rho)} \frac{f_{k-1}(u)}{u-z^2} \rd u  + \frac{z}{2\pi\iu} \cintpos_{\mathcal{C}(\rho)} \frac{u f_{k-1}(u)}{u^3-z^2} \rd u  , 
\end{equation*}
where $\rho \in \RR_{+}$ satisfies the inequalities $\abs{z}^{2/3} = \abs{z}^{\maxnorm{\mathbf{a}}^{-1}} < \rho < 1$. Moreover, from \eqref{eq:Functional_recurrence_f_k} and \eqref{eq:Functional_equation_F}, or Corollary~\ref{cor:Reduction_f_k_and_F}, we obtain  

\begin{equation*}
f_k(z) = f_{k-1}(z^2) + \frac{z^{1/3}}{3} \sum_{l=0}^2 {\zeta_1^{-l} f_{k-1}(z^{2/3} \zeta_1^l)}  ,
\end{equation*}
\begin{equation*}
F(z,w) = \frac{z}{(1-z)^2} + w \left( F(z^2,w) + \frac{z^{1/3}}{3} \sum_{l=0}^2 {\zeta_1^{-l} F(z^{2/3} \zeta_1^l,w)}  \right)  .
\end{equation*}	
The former equation is identical to \eqref{eq:Shortened_3n-1_problem_functional_recursion}, which has been established by Berg and Opfer \cite{Berg-Opfer_2013}. 
\end{example}

\subsection{Multi-Dimensional Arithmetic Dynamics}

In this subsection, we will present an extension of the methodology to higher dimensions, i.e., generalized Collatz dynamics on the lattice $\NN_0^d$ for any $d \in \NN$. In particular, when $d=1$ all the results of the multi-dimensional case reduce to those of the one-dimensional case. 

\begin{definition}[Multi-dimensional generalized Collatz dynamics] \label{def:Multi-dimensional_generalized_Collatz_dynamics}
Let $d \in \NN$, $\mathbf{m} \in \NN^d$, \linebreak $\mathbf{A}_{\circ} = [\mathbf{A}_{\mathbf{r}}]_{\mathbf{r}  \in  \mathcal{R}(\mathbf{m})}$ and $\mathbf{B} = [\mathbf{b}_{\mathbf{r}}]_{\mathbf{r}  \in  \mathcal{R}(\mathbf{m})}$, where the $\mathbf{A}_{\mathbf{r}} = [\mathbf{a}_{\mathbf{r},1},\dots,\mathbf{a}_{\mathbf{r},d}]^{\top}$ are $d \times d$ matrices, with the $\mathbf{a}_{\mathbf{r},j}$ and $\mathbf{b}_{\mathbf{r}}$ being $d$-dimensional vectors. The \emph{$(\mathbf{m},\mathbf{A}_{\circ},\mathbf{B})$-Collatz map} $\mathbf{t} \colon \NN_0^d \to \NN_0^d$ is defined by 
\begin{equation} \label{eq:Multi-dimensional_generalized_Collatz_map}
\mathbf{t}(\mathbf{n}) \coloneqq 
      \mathbf{A}_{\mathbf{r}} \mathbf{n} + \mathbf{b}_{\mathbf{r}}, \quad \mathrm{if}\ \mathbf{r} \in {\mathcal{R}(\mathbf{m})} \ \mathrm{and} \ \mathbf{n} \equiv \mathbf{r} \pmod{\mathbf{m}}     ,
\end{equation}
whenever the matrices $\mathbf{A}_{\circ}$ and $\mathbf{B}$ satisfy the following conditions:
\begin{enumerate}[start=1,label={(C2.\alph*)},leftmargin=1.15cm] 
	\item \label{cond:2.a} $\boldsymbol{\lambda}_{\mathbf{r}} = [\lambda_{\mathbf{r},1},\dots,\lambda_{\mathbf{r},d}]^{\top} \coloneqq {\mathbf{A}_{\mathbf{r}} \mathbf{m}} \in \NN^d$, and   

	\item \label{cond:2.b} $\boldsymbol{\mu}_{\mathbf{r}} = [\mu_{\mathbf{r},1},\dots,\mu_{\mathbf{r},d}]^{\top} \coloneqq {\mathbf{A}_{\mathbf{r}} \mathbf{r} + \mathbf{b}_{\mathbf{r}}} \in \NN_0^d$,   
\end{enumerate}
for all $\mathbf{r} \in {\mathcal{R}(\mathbf{m})}$. The induced \emph{$(\mathbf{m},\mathbf{A}_{\circ},\mathbf{B})$-Collatz dynamics} is defined recursively by the iterates of this map, i.e., 
\begin{equation} \label{eq:Multi-dimensional_generalized_Collatz_dynamics}
\mathbf{t}_k(\mathbf{n}) \coloneqq \mathbf{t}(\mathbf{t}_{k-1}(\mathbf{n})) = \mathbf{t}_{k-1}(\mathbf{t}(\mathbf{n})) ,  \quad  \forall k \in \mathbb{N} ,
\end{equation}
with initial condition $\mathbf{t}_0(\mathbf{n}) \coloneqq \mathbf{n}$. 
\end{definition}

\begin{remark}
By leveraging the Iverson bracket, Eq.~\eqref{eq:Multi-dimensional_generalized_Collatz_map} can be expressed as 
\begin{equation*}
\mathbf{t}(\mathbf{n}) =  \sum_{\mathbf{r}  \in  {\mathcal{R}(\mathbf{m})}}  (\mathbf{A}_{\mathbf{r}} \mathbf{n} + \mathbf{b}_{\mathbf{r}}) \iverson{\mathbf{n} \equiv \mathbf{r} \pmod{\mathbf{m}}}  .
\end{equation*}
\end{remark}

\begin{remark}
If $\mathbf{n} \in \NN_0^d$, then $\mathbf{t}(\mathbf{n}) \in \NN_0^d$. Specifically, given an $\mathbf{n} \in \NN_0^d$ and a vector $\mathbf{r} \in {\mathcal{R}(\mathbf{m})}$, we have $\mathbf{n} \equiv \mathbf{r} \pmod{\mathbf{m}} \iff \mathbf{n} = {\mathbf{q} \odot \mathbf{m} + \mathbf{r}} \: \left( = \mathbf{m} \odot \mathbf{q} + \mathbf{r} \right)$, where $\mathbf{q} = [{\lfloor n_1/m_1 \rfloor},\dots,{\lfloor n_d/m_d \rfloor}]^{\top} \in \NN_0^d$. As a result, if $\mathbf{n} \in \NN_0^d$ and $\mathbf{n} \equiv \mathbf{r} \pmod{\mathbf{m}}$, then $\mathbf{t}(\mathbf{n}) = \mathbf{A}_{\mathbf{r}} \mathbf{n} + \mathbf{b}_{\mathbf{r}} = (\mathbf{A}_{\mathbf{r}} \mathbf{m}) \odot \mathbf{q} + (\mathbf{A}_{\mathbf{r}} \mathbf{r} + \mathbf{b}_{\mathbf{r}}) = \mathbf{q} \odot \boldsymbol{\lambda}_{\mathbf{r}} + \boldsymbol{\mu}_{\mathbf{r}}   \in \NN_0^d$, because of the conditions \ref{cond:2.a} and \ref{cond:2.b}.   
\end{remark}

Afterwards, we can prove that the vector power series $\{\sum_{\mathbf{n} \in \NN_0^d} {{\mathbf{t}_k(\mathbf{n})} \, \mathbf{z}^{\mathbf{n}}}\}_{k \in \NN_0}$ and \linebreak $\sum_{k \in \NN_0} \sum_{\mathbf{n} \in \NN_0^d} {\mathbf{t}_k(\mathbf{n}) \, \mathbf{z}^{\mathbf{n}} w^k}$ are convergent for sufficiently small $\maxnorm{\mathbf{z}}$ and $\abs{w}$, thus having nonzero radii of convergence.   

\begin{proposition} \label{prop:Unconditional_uniform_and_pointwise_convergence_of_vector_GFs}
Let $\rho_{\mathbf{z}}, \rho_w \in \RR_+$ such that $\rho_{\mathbf{z}} < 1$ and $\rho_w < R_w$, where $R_w \coloneqq \linebreak \min \{ (d \maxnorm{\mathbf{A}_{\circ}})^{-1},1 \} > 0$.\footnote{Note that $\maxnorm{\mathbf{A}_{\circ}} \neq 0$ (equivalently, $\maxnorm{\mathbf{A}_{\circ}} > 0$) because of condition~\ref{cond:2.a}.} Then, the vector power series $\sum_{\mathbf{n} \in \NN_0^d} {{\mathbf{t}_k(\mathbf{n})} \, \mathbf{z}^{\mathbf{n}}}$ is unconditionally uniformly convergent on the poly-disk $\overline{\mathcal{D}}(\rho_{\mathbf{z}} \mathbf{1}) = \{\mathbf{z} \in \CC^d : \maxnorm{\mathbf{z}} \leq \rho_{\mathbf{z}}\}$ and unconditionally pointwise convergent on the open unit poly-disk $\DD^d$, for all $k \in \NN_0$. Furthermore, the vector power series $\sum_{k \in \NN_0} \sum_{\mathbf{n} \in \NN_0^d} {\mathbf{t}_k(\mathbf{n}) \, \mathbf{z}^{\mathbf{n}} w^k}$ is unconditionally uniformly convergent on the poly-disk $\overline{\mathcal{D}}(\rho_{\mathbf{z}} \mathbf{1},\rho_w) = \{(\mathbf{z},w) \in \CC^{d+1} : \maxnorm{\mathbf{z}} \leq \rho_{\mathbf{z}}, \, \abs{w} \leq \rho_w\}$ and unconditionally pointwise convergent on the open poly-disk $\mathcal{D}(\mathbf{1},R_w) = {\DD^d} \times {\mathcal{D}(R_w)}$.\footnote{When we say that a property holds for a vector power series, we mean that the property is true for all its component power series.}  
\end{proposition}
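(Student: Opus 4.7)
The plan is to mirror the proof of Proposition~\ref{prop:Unconditional_uniform_and_pointwise_convergence_of_GFs}, upgraded to several complex variables via a uniform bound on $\maxnorm{\mathbf{t}_k(\mathbf{n})}$ that is linear in $\maxnorm{\mathbf{n}}$ and at most geometric in $k$ with ratio $\alpha \coloneqq d \maxnorm{\mathbf{A}_{\circ}}$. The factor $d$ arises from the elementary componentwise estimate
\begin{equation*}
\maxnorm{\mathbf{A} \mathbf{x}} \leq d \, \maxnorm{\mathbf{A}} \, \maxnorm{\mathbf{x}}, \qquad \mathbf{A} \in \CC^{d \times d}, \ \mathbf{x} \in \CC^d,
\end{equation*}
and is precisely what forces the $d$ appearing in the definition of $R_w$.

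First I would establish the one-step inequality $\maxnorm{\mathbf{t}(\mathbf{n})} \leq \alpha \maxnorm{\mathbf{n}} + \beta$ with $\beta \coloneqq \maxnorm{\mathbf{B}}$, by applying the above matrix estimate and the triangle inequality to~\eqref{eq:Multi-dimensional_generalized_Collatz_map}; because $\alpha$ and $\beta$ are uniform in $\mathbf{r} \in \mathcal{R}(\mathbf{m})$, the index $\mathbf{r}$ drops out. An induction on $k$ then gives
\begin{equation*}
\maxnorm{\mathbf{t}_k(\mathbf{n})} \leq \alpha^k \maxnorm{\mathbf{n}} + \beta \sum_{i=0}^{k-1} \alpha^i,
\end{equation*}
which degenerates to $\maxnorm{\mathbf{n}} + \beta k$ when $\alpha = 1$. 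For fixed $k$ and $\maxnorm{\mathbf{z}} \leq \rho_{\mathbf{z}} < 1$, this bound combined with $|\mathbf{z}^{\mathbf{n}}| \leq \rho_{\mathbf{z}}^{\lVert \mathbf{n} \rVert}$ produces a Weierstrass majorant of the form $(\alpha^k \lVert \mathbf{n} \rVert + C_k) \rho_{\mathbf{z}}^{\lVert \mathbf{n} \rVert}$. Since
\begin{equation*}
\sum_{\mathbf{n} \in \NN_0^d} \rho_{\mathbf{z}}^{\lVert \mathbf{n} \rVert} = (1-\rho_{\mathbf{z}})^{-d} \quad \text{and} \quad \sum_{\mathbf{n} \in \NN_0^d} \lVert \mathbf{n} \rVert \, \rho_{\mathbf{z}}^{\lVert \mathbf{n} \rVert} = \frac{d \, \rho_{\mathbf{z}}}{(1-\rho_{\mathbf{z}})^{d+1}}
\end{equation*}
(by factorization into one-dimensional geometric-type series), the componentwise Weierstrass M-test combined with the multi-series results of Section~\ref{sec:Preliminaries_Convergence_of_Multiple_Infinite_Series} yields unconditional uniform convergence on $\overline{\mathcal{D}}(\rho_{\mathbf{z}} \mathbf{1})$. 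Unconditional pointwise convergence on $\DD^d$ follows by choosing any $\rho_{\mathbf{z}} \in (\max_j |z_j|, 1)$ for each $\mathbf{z} \in \DD^d$.

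For the two-variable series I would repeat the argument with the additional factor $|w|^k$ in the majorant. Summing over $k$ reduces to geometric-type series in $|w|\alpha$ and $|w|$, both of which converge precisely when $|w| < \min\{\alpha^{-1},1\} = R_w$; the degenerate case $\alpha = 1$, in which the inner sum grows linearly in $k$, is handled by the convergence of $\sum_k k\, |w|^k$ for $|w| < 1$. This delivers unconditional uniform convergence on $\overline{\mathcal{D}}(\rho_{\mathbf{z}} \mathbf{1}, \rho_w)$, and unconditional pointwise convergence on $\mathcal{D}(\mathbf{1}, R_w)$ by the same choice-of-radii argument.

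The main obstacle, and what will occupy most of the writing, is the passage from absolute convergence of these Weierstrass majorants to genuine \emph{unconditional} (enumeration-free) convergence of series indexed by $\NN_0^d$ and $\NN_0^{d+1}$: the algebraic bounds themselves are short, but their rigorous exploitation requires the preparatory material of Section~\ref{sec:Preliminaries_Convergence_of_Multiple_Infinite_Series} on multiple infinite series, including a Fubini/Tonelli-type rearrangement to justify the componentwise M-test in several variables. Everything else is bookkeeping analogous to the one-dimensional case.
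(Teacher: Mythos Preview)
Your proposal is correct and follows essentially the same route as the paper: both establish the inductive bound $\maxnorm{\mathbf{t}_k(\mathbf{n})} \leq \alpha^k \maxnorm{\mathbf{n}} + \beta \sum_{i=0}^{k-1}\alpha^i$ with $\alpha = d\maxnorm{\mathbf{A}_\circ}$, $\beta = \maxnorm{\mathbf{B}}$ (this is the paper's Lemma~\ref{lem:Rough_upper_bound_on_t_k_multi-dimensional}), and then feed it into the Weierstrass-type machinery of Section~\ref{sec:Preliminaries_Convergence_of_Multiple_Infinite_Series}. The only cosmetic difference is that the paper replaces $\maxnorm{\mathbf{n}}+1$ by the product $\prod_{j=1}^d(n_j+1)$ so as to invoke Proposition~\ref{prop:Sufficient_condition_for_unconditional_uniform_and_pointwise_convergence_of_multiple_power_series} directly, whereas you bound $\maxnorm{\mathbf{n}}$ by $\lVert\mathbf{n}\rVert$ and evaluate the resulting geometric-type sums by hand; both packagings are equivalent.
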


\begin{proof}
See Section~\ref{sec:Proof_Proposition_2}. Note that Section~\ref{sec:Preliminaries_Convergence_of_Multiple_Infinite_Series} is a prerequisite, since it contains the necessary definitions and results on the convergence of multiple infinite series.
\end{proof}

In view of Proposition~\ref{prop:Unconditional_uniform_and_pointwise_convergence_of_vector_GFs}, we can define the vector generating functions of several complex variables $\mathbf{f}_k \colon \DD^d \to \CC^d$, for all $k \in \NN_0$, and $\mathbf{F} \colon {\DD^d \times \mathcal{D}(R_w)} \to \CC^d$ as follows 
\begin{equation} \label{eq:Vector_generating_function_f_k}
\mathbf{f}_k(\mathbf{z}) \coloneqq \sum_{\mathbf{n} \in \NN_0^d} {{\mathbf{t}_k(\mathbf{n})} \, \mathbf{z}^{\mathbf{n}}}    ,  
\end{equation} 
\begin{equation} \label{eq:Vector_generating_function_F}
\mathbf{F}(\mathbf{z},w) \coloneqq \sum_{k \in \NN_0} \sum_{\mathbf{n} \in \NN_0^d} {\mathbf{t}_k(\mathbf{n}) \, \mathbf{z}^{\mathbf{n}} w^k} = \sum_{k \in \NN_0} {\mathbf{f}_k(\mathbf{z}) \, w^k}    .
\end{equation}
In particular, $\mathbf{t}_k(\mathbf{n}) = [t_{1,k}(\mathbf{n}),\dots,t_{d,k}(\mathbf{n})]^{\top}$, $\mathbf{f}_k(\mathbf{z}) = [f_{1,k}(\mathbf{z}),\dots,f_{d,k}(\mathbf{z})]^{\top}$ and \linebreak $\mathbf{F}(\mathbf{z},w) = [F_1(\mathbf{z},w),\dots,F_d(\mathbf{z},w)]^{\top}$, where $f_{j,k}(\mathbf{z}) \coloneqq \sum_{\mathbf{n} \in \NN_0^d} {{t_{j,k}(\mathbf{n})} \, \mathbf{z}^{\mathbf{n}}}$ and \linebreak $F_j(\mathbf{z},w) \coloneqq \sum_{k \in \NN_0} \sum_{\mathbf{n} \in \NN_0^d} {t_{j,k}(\mathbf{n}) \, \mathbf{z}^{\mathbf{n}} w^k} = \sum_{k \in \NN_0} {f_{j,k}(\mathbf{z}) \, w^k}$, for all $j \in \{1,\dots,d\}$. Note that all these vector generating functions are \emph{continuous} and \emph{holomorphic} on their respective (open) domains, because of Proposition~\ref{prop:Unconditional_uniform_and_pointwise_convergence_of_vector_GFs} and Lemma~\ref{lem:Cauchy_integral_formulas}.

\begin{theorem} \label{thrm:Theorem_2}
Given an $(\mathbf{m},\mathbf{A}_{\circ},\mathbf{B})$-Collatz dynamics, according to Definition~\ref{def:Multi-dimensional_generalized_Collatz_dynamics}, we have the following statements:
\begin{enumerate}[start=1,label={(S2.\alph*)},leftmargin=1.15cm] 
	\item \label{statement:2.a} Let $\textbf{z} \in \DD_{*}^d \coloneqq (\DD \setminus\{0\})^d$. Then, there exist unique complex numbers $\{\sigma_{\mathbf{r},j,l}(z_j)\}$ and $\{\tau_{\mathbf{r},j,l}(z_j)\}$, a finite nonempty set $\mathcal{L}(\mathbf{r}) \subseteq \NN_0^{d+1}$, with $(\mathbf{0},0) \in \mathcal{L}(\mathbf{r})$, complex numbers $\{\eta_{\mathbf{r},\boldsymbol{\ell},\nu}(\mathbf{z})\}$, and $d$-dimensional complex vectors $\{\boldsymbol{\varphi}_{\mathbf{r},\boldsymbol{\ell},\nu}(\mathbf{z})\}$ such that 
\begin{equation} \label{eq:Partial_fraction_decomposition_wrt_u_multi-dimensional}
\begin{split}
\prod_{j=1}^d {\frac{1}{u_j^{\mu_{\mathbf{r},j} - \lambda_{\mathbf{r},j} + 1} (u_j^{\lambda_{\mathbf{r},j}}-z_j^{m_j})}} & = \prod_{j=1}^d {\frac{1}{u_j^{\mu_{\mathbf{r},j} - \lambda_{\mathbf{r},j} + 1} \prod_{l=0}^{\lambda_{\mathbf{r},j}-1} (u_j - \phi_{\mathbf{r},j,l}(z_j))}}    \\
& = \prod_{j=1}^d {\left(  \sum_{l=0}^{\mu_{\mathbf{r},j} - \lambda_{\mathbf{r},j}} {\frac{\sigma_{\mathbf{r},j,l}(z_j)}{u_j^{l+1}}} + \sum_{l=0}^{\lambda_{\mathbf{r},j}-1} {\frac{\tau_{\mathbf{r},j,l}(z_j)}{u_j - \phi_{\mathbf{r},j,l}(z_j)}}  \right)}   \\
& = \sum_{(\boldsymbol{\ell},\nu) \in \mathcal{L}(\mathbf{r})} {\frac{\eta_{\mathbf{r},\boldsymbol{\ell},\nu}(\mathbf{z})}{\left( \mathbf{u} - \boldsymbol{\varphi}_{\mathbf{r},\boldsymbol{\ell},\nu}(\mathbf{z}) \right)^{\boldsymbol{\ell}+\mathbf{1}}}}  , 
\end{split}   
\end{equation}
where 
\begin{equation} \label{eq:phi_rjl}
\phi_{\mathbf{r},j,l}(z_j) \coloneqq z_j^{m_j/\lambda_{\mathbf{r},j}} e^{\iu 2\pi l / \lambda_{\mathbf{r},j}} = z_j^{m_j/\lambda_{\mathbf{r},j}} \zeta_{\mathbf{r},j}^l   
\end{equation}
and $\zeta_{\mathbf{r},j} \coloneqq e^{\iu 2\pi / \lambda_{\mathbf{r},j}}$, for all $\mathbf{r} \in {\mathcal{R}(\mathbf{m})}$, $j \in \{1,\dots,d\}$, $l \in \{0,\dots,\lambda_{\mathbf{r},j}-1\}$. Note that the first and second equality in \eqref{eq:Partial_fraction_decomposition_wrt_u_multi-dimensional} follow from \eqref{eq:Partial_fraction_decomposition_wrt_u}, and the $\{\phi_{\mathbf{r},j,l}(z_j)\}$ in \eqref{eq:phi_rjl} are well defined because $\lambda_{\mathbf{r},j} \coloneqq \mathbf{a}_{\mathbf{r},j}^{\top} \mathbf{m} \neq 0$ based on condition \ref{cond:2.a}.\footnote{Given $\{\sigma_{\mathbf{r},j,l}(z_j)\}$, $\{\tau_{\mathbf{r},j,l}(z_j)\}$ and $\{\phi_{\mathbf{r},j,l}(z_j)\}$, we can compute recursively the $\mathcal{L}(\mathbf{r})$, $\{\eta_{\mathbf{r},\boldsymbol{\ell},\nu}(\mathbf{z})\}$ and $\{\boldsymbol{\varphi}_{\mathbf{r},\boldsymbol{\ell},\nu}(\mathbf{z})\}$ based on the inductive proof given in Section~\ref{sec:Proof_Statement_2.a}.}

	\item \label{statement:2.b} Moreover, the vector generating functions $\mathbf{f}_k(\mathbf{z})$ defined in \eqref{eq:Vector_generating_function_f_k} satisfy the contour-integral recursion 
\begin{equation} \label{eq:Contour-integral_recursion_vector_f_k}
\mathbf{f}_k(\mathbf{z}) = \sum_{\mathbf{r} \in {\mathcal{R}(\mathbf{m})}} \frac{\mathbf{z}^\mathbf{r}}{(2\pi\iu)^d} \cintpos_{\mathcal{C}(\boldsymbol{\rho})}  \left( \prod_{j=1}^d {\frac{1}{u_j^{\mu_{\mathbf{r},j} - \lambda_{\mathbf{r},j} + 1} (u_j^{\lambda_{\mathbf{r},j}}-z_j^{m_j})}} \right)  \mathbf{f}_{k-1}(\mathbf{u})  \rd \mathbf{u}    ,
\end{equation} 
for any poly-radius $\boldsymbol{\rho} \in \RR_{+}^d$ such that $\abs{z_j}^{m_j/\lambda_j^{\max}} < \rho_j < 1$, where $\lambda_j^{\max} \coloneqq \linebreak \max\{ \lambda_{\mathbf{r},j} : \mathbf{r} \in \mathcal{R}(\mathbf{m}) \}$, for all $j \in \{1,\dots,d\}$, as well as the functional recurrence 
\begin{equation} \label{eq:Functional_recurrence_vector_f_k}
\mathbf{f}_k(\mathbf{z}) = \sum_{\mathbf{r} \in {\mathcal{R}(\mathbf{m})}} \mathbf{z}^\mathbf{r} \sum_{(\boldsymbol{\ell},\nu) \in \mathcal{L}(\mathbf{r})}  \frac{\eta_{\mathbf{r},\boldsymbol{\ell},\nu}(\mathbf{z})}{\boldsymbol{\ell}!}  {\mathbf{f}_{k-1}^{(\boldsymbol{\ell})}(\boldsymbol{\varphi}_{\mathbf{r},\boldsymbol{\ell},\nu}(\mathbf{z}))}     ,
\end{equation}
for all $\textbf{z} \in \DD_{*}^d$ and $k \in \NN$. The initial condition is $\mathbf{f}_0(\mathbf{z}) = [f_{1,0}(\mathbf{z}),\dots,f_{d,0}(\mathbf{z})]^{\top}$, where $f_{j,0}(\textbf{z}) = \frac{z_j}{(1-z_j)^2 \prod_{l \neq j} {(1-z_l)}}$ for all $j \in \{1,\dots,d\}$. 

	
	\item \label{statement:2.c} Finally, the vector generating function $\mathbf{F}(\mathbf{z},w)$ defined in \eqref{eq:Vector_generating_function_F} satisfies the functional equation 
\begin{equation} \label{eq:Functional_equation_vector_F}
\mathbf{F}(\mathbf{z},w) = \mathbf{f}_0(\mathbf{z}) + w \sum_{\mathbf{r} \in {\mathcal{R}(\mathbf{m})}} \mathbf{z}^\mathbf{r} \sum_{(\boldsymbol{\ell},\nu) \in \mathcal{L}(\mathbf{r})}  \frac{\eta_{\mathbf{r},\boldsymbol{\ell},\nu}(\mathbf{z})}{\boldsymbol{\ell}!} \mathbf{F}_{*}^{(\boldsymbol{\ell})}(\boldsymbol{\varphi}_{\mathbf{r},\boldsymbol{\ell},\nu}(\mathbf{z}),w)     ,
\end{equation}
for all $(\mathbf{z},w) \in {\DD_{*}^d \times \mathcal{D}(R_w)}$, where $R_w$ is defined in Proposition~\ref{prop:Unconditional_uniform_and_pointwise_convergence_of_vector_GFs} and $\mathbf{F}_{*}^{(\boldsymbol{\ell})}(\mathbf{v},w) \coloneqq \left. \frac{\partial^{\lVert \boldsymbol{\ell} \rVert} \mathbf{F}(\mathbf{z},w)}{\partial z_1^{\ell_1} \cdots \partial z_d^{\ell_d}} \right\rvert_{\mathbf{z}=\mathbf{v}}$.  
\end{enumerate}
\end{theorem}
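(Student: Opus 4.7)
The plan is to adapt the one-dimensional argument of Theorem~\ref{thrm:Theorem_1} to $d$ complex variables, exploiting the product structure of the multi-dimensional Cauchy kernel. For statement~\ref{statement:2.a}, the first equality is the factorization $u_j^{\lambda_{\mathbf{r},j}} - z_j^{m_j} = \prod_{l=0}^{\lambda_{\mathbf{r},j}-1}(u_j - \phi_{\mathbf{r},j,l}(z_j))$ implied by~\eqref{eq:phi_rjl}, and the second equality follows by applying the one-dimensional partial fraction decomposition~\eqref{eq:Partial_fraction_decomposition_wrt_u} from~\ref{statement:1.a} separately in each variable $u_j$. Expanding the resulting product of sums by distributivity yields a finite collection of rank-one summands of the form $\prod_{j=1}^{d} \frac{c_j(z_j)}{(u_j - v_j(z_j))^{e_j+1}}$ with $e_j \in \NN_0$; packaging each such summand as $\frac{\eta}{(\mathbf{u}-\boldsymbol{\varphi})^{\boldsymbol{\ell}+\mathbf{1}}}$, with $\boldsymbol{\ell} = [e_1,\dots,e_d]^{\top}$ and an auxiliary enumeration index $\nu$ to disambiguate summands sharing the same $\boldsymbol{\ell}$, gives the claimed decomposition and defines $\mathcal{L}(\mathbf{r})$.

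For statement~\ref{statement:2.b}, I would mirror the proof of~\ref{statement:1.b}. Writing each $\mathbf{n} \in \NN_0^d$ uniquely as $\mathbf{n} = \mathbf{q} \odot \mathbf{m} + \mathbf{r}$ with $\mathbf{q} \in \NN_0^d$ and $\mathbf{r} \in \mathcal{R}(\mathbf{m})$, and using $\mathbf{t}_k(\mathbf{n}) = \mathbf{t}_{k-1}(\mathbf{t}(\mathbf{n}))$ together with $\mathbf{t}(\mathbf{n}) = \mathbf{q} \odot \boldsymbol{\lambda}_{\mathbf{r}} + \boldsymbol{\mu}_{\mathbf{r}}$ on this class, one rearranges
\[
\mathbf{f}_k(\mathbf{z}) = \sum_{\mathbf{r} \in \mathcal{R}(\mathbf{m})} \mathbf{z}^{\mathbf{r}} \sum_{\mathbf{q} \in \NN_0^d} \mathbf{t}_{k-1}(\mathbf{q} \odot \boldsymbol{\lambda}_{\mathbf{r}} + \boldsymbol{\mu}_{\mathbf{r}}) \, \mathbf{z}^{\mathbf{q} \odot \mathbf{m}}.
\]
Substituting the multi-dimensional Cauchy coefficient formula $\mathbf{t}_{k-1}(\mathbf{p}) = \frac{1}{(2\pi\iu)^d} \cintpos_{\mathcal{C}(\boldsymbol{\rho})} \mathbf{f}_{k-1}(\mathbf{u}) \, \mathbf{u}^{-\mathbf{p}-\mathbf{1}} \rd \mathbf{u}$ from Lemma~\ref{lem:Cauchy_integral_formulas} and exchanging the $\mathbf{q}$-sum with the integral produce the componentwise geometric series $\prod_{j=1}^{d} \sum_{q_j \geq 0} (z_j^{m_j}/u_j^{\lambda_{\mathbf{r},j}})^{q_j}$, which converges uniformly on $\mathcal{C}(\boldsymbol{\rho})$ whenever $\rho_j > \abs{z_j}^{m_j/\lambda_j^{\max}}$ for every $j$. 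Summing them in closed form yields~\eqref{eq:Contour-integral_recursion_vector_f_k}. Inserting the decomposition~\eqref{eq:Partial_fraction_decomposition_wrt_u_multi-dimensional} and applying the multi-dimensional Cauchy formula for higher-order partial derivatives (again Lemma~\ref{lem:Cauchy_integral_formulas}) then delivers~\eqref{eq:Functional_recurrence_vector_f_k}, since each $\boldsymbol{\varphi}_{\mathbf{r},\boldsymbol{\ell},\nu}(\mathbf{z})$ has components with modulus at most $\abs{z_j}^{m_j/\lambda_j^{\max}} < \rho_j$ and therefore lies strictly inside $\mathcal{C}(\boldsymbol{\rho})$. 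The initial condition $\mathbf{f}_0(\mathbf{z})$ arises directly from the elementary identity $\sum_{\mathbf{n} \in \NN_0^d} n_j \, \mathbf{z}^{\mathbf{n}} = \frac{z_j}{(1-z_j)^2} \prod_{l \neq j} \frac{1}{1-z_l}$ applied componentwise.

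For statement~\ref{statement:2.c}, I would multiply~\eqref{eq:Functional_recurrence_vector_f_k} by $w^k$ and sum over $k \in \NN$. The $k$-independent factors $\eta_{\mathbf{r},\boldsymbol{\ell},\nu}(\mathbf{z})/\boldsymbol{\ell}!$ factor out, the identity $\sum_{k \geq 1} \mathbf{f}_{k-1}^{(\boldsymbol{\ell})}(\boldsymbol{\varphi}) \, w^k = w \, \mathbf{F}_{*}^{(\boldsymbol{\ell})}(\boldsymbol{\varphi}, w)$ follows after exchanging the partial derivative $\partial_{\mathbf{z}}^{\boldsymbol{\ell}}$ with the $k$-summation, and adding the $k=0$ contribution $\mathbf{f}_0(\mathbf{z})$ to both sides produces~\eqref{eq:Functional_equation_vector_F}. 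This final exchange is licensed by Proposition~\ref{prop:Unconditional_uniform_and_pointwise_convergence_of_vector_GFs} combined with term-by-term differentiability of power series on the interior of their domain of convergence.

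The principal obstacle I anticipate is the rigorous justification of the two interchanges of multi-indexed infinite sums with the $d$-fold contour integral (the one over $\mathbf{q}$ producing the geometric series, and the one performed implicitly when invoking partial fractions term-by-term), together with the multi-index bookkeeping underlying the decomposition in~\ref{statement:2.a}. All three issues reduce to a careful choice of the poly-radius $\boldsymbol{\rho}$ satisfying $\abs{z_j}^{m_j/\lambda_j^{\max}} < \rho_j < 1$ for each $j$, which simultaneously guarantees uniform convergence of the geometric series on $\mathcal{C}(\boldsymbol{\rho})$, holomorphy of $\mathbf{f}_{k-1}$ on an open neighborhood of $\overline{\mathcal{D}}(\boldsymbol{\rho})$ via Proposition~\ref{prop:Unconditional_uniform_and_pointwise_convergence_of_vector_GFs}, and strict interiority of the poles $\boldsymbol{\varphi}_{\mathbf{r},\boldsymbol{\ell},\nu}(\mathbf{z})$; the unconditional convergence results of Section~\ref{sec:Preliminaries_Convergence_of_Multiple_Infinite_Series} then legitimize the termwise manipulations needed to close the argument.
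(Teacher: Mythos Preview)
Your proposal is correct and follows essentially the same route as the paper's proof in Section~\ref{sec:Proof_Theorem_2}: the product-of-one-dimensional partial fraction decompositions for~\ref{statement:2.a}, the residue-class splitting plus geometric-series summation under the poly-contour integral for~\ref{statement:2.b}, and the index shift in $w$ for~\ref{statement:2.c}. The only cosmetic difference is that where you substitute the Cauchy coefficient formula $\mathbf{t}_{k-1}(\mathbf{p}) = \frac{1}{(2\pi\iu)^d}\cintpos_{\mathcal{C}(\boldsymbol{\rho})} \mathbf{f}_{k-1}(\mathbf{u})\,\mathbf{u}^{-\mathbf{p}-\mathbf{1}}\rd\mathbf{u}$ directly, the paper instead inserts an auxiliary sum $\sum_{\mathbf{n}}\iverson{\mathbf{n}=\mathbf{q}\odot\boldsymbol{\lambda}_{\mathbf{r}}+\boldsymbol{\mu}_{\mathbf{r}}}\,\mathbf{t}_{k-1}(\mathbf{n})$ and then replaces the Iverson bracket by its contour-integral representation (Lemma~\ref{lem:Integral_representation_multi-dimensional_Kronecker_delta}), in keeping with the Egorychev-method framing; both paths produce the identical integrand.
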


\begin{proof}
See Section~\ref{sec:Proof_Theorem_2}. Note that Sections~\ref{sec:Preliminaries_Convergence_of_Multiple_Infinite_Series}~and~\ref{sec:Useful_Lemmas} are prerequisites.
\end{proof}

\begin{corollary}
If $\mu_{\mathbf{r},j} \leq \lambda_{\mathbf{r},j}$ (including the case with $\mu_{\mathbf{r},j} = \lambda_{\mathbf{r},j}$) for all $\mathbf{r} \in {\mathcal{R}(\mathbf{m})}$ and $j \in \{1,\dots,d\}$, then \eqref{eq:Functional_recurrence_vector_f_k}~and~\eqref{eq:Functional_equation_vector_F} reduce to the following equations: 
\begin{equation*}
\mathbf{f}_k(\mathbf{z}) = \sum_{\mathbf{r} \in {\mathcal{R}(\mathbf{m})}} \mathbf{z}^\mathbf{r} \sum_{\nu \in \mathcal{N}(\mathbf{r})}  {\widetilde{\eta}_{\mathbf{r},\nu}(\mathbf{z})}  {\mathbf{f}_{k-1}(\widetilde{\boldsymbol{\varphi}}_{\mathbf{r},\nu}(\mathbf{z}))}    ,
\end{equation*}
\begin{equation*}
\mathbf{F}(\mathbf{z},w) =  \mathbf{f}_0(\mathbf{z}) + w \sum_{\mathbf{r} \in {\mathcal{R}(\mathbf{m})}} \mathbf{z}^\mathbf{r} \sum_{\nu \in \mathcal{N}(\mathbf{r})}  {\widetilde{\eta}_{\mathbf{r},\nu}(\mathbf{z})} \mathbf{F}(\widetilde{\boldsymbol{\varphi}}_{\mathbf{r},\nu}(\mathbf{z}),w)   ,
\end{equation*}
where $\mathcal{N}(\mathbf{r}) \coloneqq \{ \nu \in \NN_0 : (\mathbf{0},\nu) \in \mathcal{L}(\mathbf{r}) \}$, $\widetilde{\eta}_{\mathbf{r},\nu}(\mathbf{z}) \coloneqq \eta_{\mathbf{r},\mathbf{0},\nu}(\mathbf{z})$, $\widetilde{\boldsymbol{\varphi}}_{\mathbf{r},\nu}(\mathbf{z}) \coloneqq \boldsymbol{\varphi}_{\mathbf{r},\mathbf{0},\nu}(\mathbf{z})$.
\end{corollary}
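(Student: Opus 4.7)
The plan is to show that the hypothesis $\mu_{\mathbf{r},j} \leq \lambda_{\mathbf{r},j}$ forces every term in the multivariate partial fraction expansion \eqref{eq:Partial_fraction_decomposition_wrt_u_multi-dimensional} to carry exponent vector $\boldsymbol{\ell}+\mathbf{1} = \mathbf{1}$, after which the two reduced identities fall out by specializing $\boldsymbol{\ell} = \mathbf{0}$ in \eqref{eq:Functional_recurrence_vector_f_k} and \eqref{eq:Functional_equation_vector_F}.

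First, I would rewrite the hypothesis as $\mu_{\mathbf{r},j} - \lambda_{\mathbf{r},j} + 1 \leq 1$ for every $\mathbf{r} \in \mathcal{R}(\mathbf{m})$ and every $j \in \{1,\dots,d\}$, so that the factor $u_j^{\mu_{\mathbf{r},j} - \lambda_{\mathbf{r},j} + 1}$ in the denominator of the first line of \eqref{eq:Partial_fraction_decomposition_wrt_u_multi-dimensional} contributes at most a simple pole at $u_j = 0$ (and no pole at all when the exponent is non-positive). Combined with the simple poles at $u_j = \phi_{\mathbf{r},j,l}(z_j)$ arising from the factorization of $u_j^{\lambda_{\mathbf{r},j}} - z_j^{m_j}$, this means that the univariate factor decomposes as a sum of simple-pole terms of the form $c/(u_j - p)$ only. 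Observe that, in the middle line of \eqref{eq:Partial_fraction_decomposition_wrt_u_multi-dimensional}, the inner sum $\sum_{l=0}^{\mu_{\mathbf{r},j} - \lambda_{\mathbf{r},j}} \sigma_{\mathbf{r},j,l}(z_j)/u_j^{l+1}$ either vanishes (when $\mu_{\mathbf{r},j} < \lambda_{\mathbf{r},j}$) or reduces to the single simple-pole term $\sigma_{\mathbf{r},j,0}(z_j)/u_j$ (when $\mu_{\mathbf{r},j} = \lambda_{\mathbf{r},j}$).

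Next I would take the product over $j \in \{1,\dots,d\}$ of these univariate expansions and fully distribute it. Every resulting monomial is of the form $\prod_{j=1}^d c_j/(u_j - p_j) = \eta/(\mathbf{u} - \boldsymbol{\varphi})^{\boldsymbol{\ell}+\mathbf{1}}$ with $\boldsymbol{\ell} = \mathbf{0}$, because each univariate denominator has multiplicity one. Collecting identical terms yields a representation of the type prescribed in the third line of \eqref{eq:Partial_fraction_decomposition_wrt_u_multi-dimensional} whose summation index set is contained in $\{\mathbf{0}\} \times \NN_0$. Hence we may take $\mathcal{L}(\mathbf{r}) = \{\mathbf{0}\} \times \mathcal{N}(\mathbf{r})$ for some finite $\mathcal{N}(\mathbf{r}) \subseteq \NN_0$ with $0 \in \mathcal{N}(\mathbf{r})$, consistently with the requirement $(\mathbf{0},0) \in \mathcal{L}(\mathbf{r})$, and set $\widetilde{\eta}_{\mathbf{r},\nu}(\mathbf{z}) = \eta_{\mathbf{r},\mathbf{0},\nu}(\mathbf{z})$ and $\widetilde{\boldsymbol{\varphi}}_{\mathbf{r},\nu}(\mathbf{z}) = \boldsymbol{\varphi}_{\mathbf{r},\mathbf{0},\nu}(\mathbf{z})$.

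Substituting $\boldsymbol{\ell} = \mathbf{0}$ into \eqref{eq:Functional_recurrence_vector_f_k} and \eqref{eq:Functional_equation_vector_F} now yields $\boldsymbol{\ell}! = 1$, $\mathbf{f}_{k-1}^{(\mathbf{0})}(\cdot) = \mathbf{f}_{k-1}(\cdot)$, and $\mathbf{F}_{*}^{(\mathbf{0})}(\cdot,w) = \mathbf{F}(\cdot,w)$, whereupon both claimed reductions are immediate. There is no real analytic obstacle — the whole argument is a book-keeping exercise in partial fractions — but the one subtlety worth flagging is that statement~\ref{statement:2.a} asserts only the existence (not uniqueness) of a representation with index set $\mathcal{L}(\mathbf{r})$, so rather than invoking uniqueness one should simply construct the simple-pole representation directly from the product of univariate decompositions and then apply Theorem~\ref{thrm:Theorem_2} to that particular choice.
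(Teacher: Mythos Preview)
Your proposal is correct and follows essentially the same route as the paper: the paper's proof simply states that under the hypothesis the multi-dimensional partial fraction decomposition in \eqref{eq:Partial_fraction_decomposition_wrt_u_multi-dimensional} collapses to a sum indexed by $(\mathbf{0},\nu)$ only, and then invokes Theorem~\ref{thrm:Theorem_2}. Your write-up supplies more detail on \emph{why} the univariate factors contribute only simple poles (separating the cases $\mu_{\mathbf{r},j} < \lambda_{\mathbf{r},j}$ and $\mu_{\mathbf{r},j} = \lambda_{\mathbf{r},j}$), and your closing remark about constructing the simple-pole representation directly rather than appealing to uniqueness is a sensible precaution, though in practice the paper's inductive construction of $\mathcal{L}(\mathbf{r})$ in Section~\ref{sec:Proof_Statement_2.a} already yields exactly this representation.
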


\begin{proof}
Immediate consequence of Theorem~\ref{thrm:Theorem_2}, since in this case the multi-dimensional partial fraction decomposition in \eqref{eq:Partial_fraction_decomposition_wrt_u_multi-dimensional} becomes  
\begin{equation*} 
\begin{split}
\prod_{j=1}^d {\frac{1}{u_j^{\mu_{\mathbf{r},j} - \lambda_{\mathbf{r},j} + 1} (u_j^{\lambda_{\mathbf{r},j}}-z_j^{m_j})}} & = \sum_{(\mathbf{0},\nu) \in \mathcal{L}(\mathbf{r})} {\frac{\eta_{\mathbf{r},\mathbf{0},\nu}(\mathbf{z})}{\left( \mathbf{u} - \boldsymbol{\varphi}_{\mathbf{r},\mathbf{0},\nu}(\mathbf{z}) \right)^{\mathbf{1}}}}   \\
& = \sum_{\nu \in \mathcal{N}(\mathbf{r})} {\frac{\widetilde{\eta}_{\mathbf{r},\nu}(\mathbf{z})}{\left( \mathbf{u} - \widetilde{\boldsymbol{\varphi}}_{\mathbf{r},\nu}(\mathbf{z}) \right)^{\mathbf{1}}}}   .  
\end{split}
\end{equation*}
\end{proof}

\section{Preliminaries: Convergence of Multiple Infinite Series} \label{sec:Preliminaries_Convergence_of_Multiple_Infinite_Series}

In contrast to single infinite series, when dealing with multiple infinite series there is no standard way to define its (finite) partial sums based on the natural ordering of its terms. As a result, appropriate definitions of convergence are needed.

\subsection{Multiple Series of Complex Numbers}

\begin{definition}[Absolute convergence of multiple infinite series {\cite[p.\,6]{Laurent-Thiebaut_2011}}, {\cite[p.\,13]{Range_1986}}]
Let $d \in \NN$ and $(\alpha_{\mathbf{n}})_{\mathbf{n} \in \NN_0^d}$ be a sequence of complex numbers. The infinite series $\sum_{\mathbf{n} \in \NN_0^d} {\alpha_{\mathbf{n}}}$ is called \emph{absolutely convergent} if, and only if,
\begin{equation*}
\sum_{\mathbf{n} \in \NN_0^d} \abs{\alpha_{\mathbf{n}}} \coloneqq  \sup \left\{ \sum_{\mathbf{n} \in \mathcal{F}} \abs{\alpha_{\mathbf{n}}} : \mathcal{F} \subseteq \NN_0^d,\, \mathcal{F}\ \mathrm{is\ finite} \right\} < \infty  .
\end{equation*} 
\end{definition}

Note that this definition is a generalization of the (standard) absolute convergence for single ($d=1$) infinite series: $\sum_{n \in \NN_0} \abs{\alpha_n} < \infty$ or, equivalently, the series $\sum_{n \in \NN_0} \abs{\alpha_n}$ converges. 

\begin{definition}[Unconditional convergence of multiple infinite series]
An infinite series of complex numbers $\sum_{\mathbf{n} \in \NN_0^d} {\alpha_{\mathbf{n}}}$ is said to be \emph{unconditionally convergent} if, and only if, there exists an $\ell \in \CC$ such that, for any bijection $\boldsymbol{\sigma} \colon \NN_0 \to \NN_0^d$, the ordinary series $\sum_{j=0}^{\infty} {\alpha_{\boldsymbol{\sigma}(j)}}$ converges, in the usual sense, to $\ell$. The unique complex number $\ell$, which is independent of (or, invariant under) any rearrangement of the series' terms, is called the limit/sum of the series and we write $\sum_{\mathbf{n} \in \NN_0^d} {\alpha_{\mathbf{n}}} = \ell$. 
\end{definition}

The simplest bijection $\boldsymbol{\sigma}$ is the lexicographic order on $\NN_0^d$. In particular, we have $n_1 < n_2$ if and only if $\boldsymbol{\sigma}(n_1) {\prec}_{\mathrm{lex}} \boldsymbol{\sigma}(n_2)$, i.e., $\boldsymbol{\sigma}(n_1)$ precedes $\boldsymbol{\sigma}(n_2)$ in the lexicographic (total) order ${\prec}_{\mathrm{lex}}$. In general, every bijection $\boldsymbol{\sigma}'$ is a permutation of the lexicographic order on $\NN_0^d$.

\begin{proposition}[Absolute convergence implies unconditional convergence] \label{prop:Absolute_convergence_implies_unconditional_convergence}
If a multiple series of complex numbers $\sum_{\mathbf{n} \in \NN_0^d} {\alpha_{\mathbf{n}}}$ is absolutely convergent, then it is unconditionally convergent. 
\end{proposition}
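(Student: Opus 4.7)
The plan is to reduce the claim to the classical Dirichlet theorem on rearrangements of absolutely convergent ordinary series by a two-step argument. Set $M \coloneqq \sum_{\mathbf{n} \in \NN_0^d} \abs{\alpha_{\mathbf{n}}} < \infty$, which is finite by hypothesis. First I would show that for every bijection $\boldsymbol{\sigma} \colon \NN_0 \to \NN_0^d$ the ordinary complex series $\sum_{j=0}^{\infty} \alpha_{\boldsymbol{\sigma}(j)}$ converges absolutely in the usual sense: its $N$-th partial sum of absolute values equals $\sum_{\mathbf{n} \in \mathcal{F}_N} \abs{\alpha_{\mathbf{n}}}$, where $\mathcal{F}_N \coloneqq \{\boldsymbol{\sigma}(0),\dots,\boldsymbol{\sigma}(N)\}$ is a finite subset of $\NN_0^d$, and this is therefore $\leq M$. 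Monotone boundedness then yields convergence of $\sum_j \abs{\alpha_{\boldsymbol{\sigma}(j)}}$ and, in turn, of $\sum_j \alpha_{\boldsymbol{\sigma}(j)}$ to some $\ell_{\boldsymbol{\sigma}} \in \CC$.

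Next I would establish the invariance $\ell_{\boldsymbol{\sigma}} = \ell$ for every $\boldsymbol{\sigma}$, with $\ell \coloneqq \ell_{\boldsymbol{\sigma}_{0}}$ for a reference (e.g.\ lexicographic) bijection $\boldsymbol{\sigma}_0$. For any other bijection $\boldsymbol{\sigma}'$, the composition $\pi \coloneqq \boldsymbol{\sigma}_0^{-1} \circ \boldsymbol{\sigma}' \colon \NN_0 \to \NN_0$ is a permutation, and the identity $\alpha_{\boldsymbol{\sigma}'(j)} = \alpha_{\boldsymbol{\sigma}_0(\pi(j))}$ exhibits $\sum_j \alpha_{\boldsymbol{\sigma}'(j)}$ as a $\pi$-rearrangement of the absolutely convergent ordinary series $\sum_j \alpha_{\boldsymbol{\sigma}_0(j)}$. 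Dirichlet's rearrangement theorem then yields $\ell_{\boldsymbol{\sigma}'} = \ell_{\boldsymbol{\sigma}_0} = \ell$.

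The only nontrivial ingredient is Dirichlet's theorem; since the paper is otherwise self-contained, the likeliest presentation is an inline $\varepsilon$-argument. Given $\varepsilon > 0$, I would pick $N_0$ so large that both the tail $\sum_{j > N_0} \abs{\alpha_{\boldsymbol{\sigma}_0(j)}}$ and the deviation $\lvert \ell - \sum_{j=0}^{N_0} \alpha_{\boldsymbol{\sigma}_0(j)} \rvert$ are less than $\varepsilon/2$, and then pick $N_1$ so that the initial block $\{\boldsymbol{\sigma}_0(0),\dots,\boldsymbol{\sigma}_0(N_0)\}$ is contained in $\{\boldsymbol{\sigma}'(0),\dots,\boldsymbol{\sigma}'(N_1)\}$, which is possible because $\boldsymbol{\sigma}'$ is a bijection onto $\NN_0^d$. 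For $N \geq N_1$, the difference $\sum_{j=0}^N \alpha_{\boldsymbol{\sigma}'(j)} - \sum_{j=0}^{N_0} \alpha_{\boldsymbol{\sigma}_0(j)}$ is then a finite sum of terms $\alpha_{\mathbf{n}}$ whose $\boldsymbol{\sigma}_0$-preimages all exceed $N_0$, so its modulus is dominated by the tail $\varepsilon/2$, and the triangle inequality closes the estimate. I expect no genuine obstacle beyond careful index bookkeeping: the main step is identifying the residual terms in the invariance argument as tail terms in the $\boldsymbol{\sigma}_0$-enumeration, which is exactly what the containment $\mathcal{F}_{N_0} \subseteq \{\boldsymbol{\sigma}'(0),\dots,\boldsymbol{\sigma}'(N_1)\}$ is designed to guarantee.
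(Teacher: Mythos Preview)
Your proposal is correct and follows essentially the same route as the paper: establish absolute (hence ordinary) convergence of $\sum_j \alpha_{\boldsymbol{\sigma}(j)}$ for every bijection via the bound by $M$, then prove invariance of the limit by a containment-plus-tail $\varepsilon$-argument. The only cosmetic difference is that you factor through the permutation $\pi = \boldsymbol{\sigma}_0^{-1}\circ\boldsymbol{\sigma}'$ of $\NN_0$ and name the classical Dirichlet rearrangement theorem, whereas the paper runs the same $\varepsilon$-estimate directly for two bijections $\boldsymbol{\sigma},\boldsymbol{\sigma}'\colon \NN_0\to\NN_0^d$ without that reduction.
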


\begin{proof}
For any bijection $\boldsymbol{\sigma} \colon \NN_0 \to \NN_0^d$, we have $\sum_{j=0}^{\infty} \abs{\alpha_{\boldsymbol{\sigma}(j)}} \leq \sum_{\mathbf{n} \in \NN_0^d} \abs{\alpha_{\mathbf{n}}} < \infty$. Since absolute convergence implies convergence (in the standard sense), the series $\sum_{j=0}^{\infty} {\alpha_{\boldsymbol{\sigma}(j)}}$ converges to some ${\ell}_{\boldsymbol{\sigma}} \in \CC$, i.e., $\sum_{j=0}^{\infty} {\alpha_{\boldsymbol{\sigma}(j)}} = {\ell}_{\boldsymbol{\sigma}}$. Now, it remains to show that the limit ${\ell}_{\boldsymbol{\sigma}}$ is independent of $\boldsymbol{\sigma}$. For this purpose, let $\boldsymbol{\sigma}$ and $\boldsymbol{\sigma}'$ be two such bijections (arbitrarily chosen) with $\sum_{j=0}^{\infty} {\alpha_{\boldsymbol{\sigma}(j)}}  =  {\ell}_{\boldsymbol{\sigma}}$ and $\sum_{j=0}^{\infty} {\alpha_{\boldsymbol{\sigma}'(j)}} = {\ell}_{\boldsymbol{\sigma}'}$; we will prove that ${\ell}_{\boldsymbol{\sigma}} = {\ell}_{\boldsymbol{\sigma}'}$. Let $\epsilon > 0$ be fixed, but arbitrary. The series $\sum_{j=0}^{\infty} \abs{\alpha_{\boldsymbol{\sigma}(j)}}$ converges and so, by the Cauchy convergence criterion \cite[Theorem~3.22]{Rudin_1976}, there is an $m_0 = m_0(\epsilon) \in \NN_0$ such that $\abs{\sum_{j = m_0+1}^m \abs{\alpha_{\boldsymbol{\sigma}(j)}}} = \sum_{j = m_0+1}^m \abs{\alpha_{\boldsymbol{\sigma}(j)}} \leq \epsilon$ for every integer $m \geq m_0+1$ (this is also true when $m < m_0+1$, because the latter sum equals $0 \leq \epsilon$). Let $n_0 = n_0(m_0) = n_0(\epsilon) \in \NN_0$ be sufficiently large so that $\{{\boldsymbol{\sigma}(0)},\dots,{\boldsymbol{\sigma}(m_0)}\} \subseteq \{{\boldsymbol{\sigma}'(0)},\dots,{\boldsymbol{\sigma}'(n_0)}\}$; such an $n_0$ always exists since $\boldsymbol{\sigma}$ and $\boldsymbol{\sigma}'$ are bijections. Then, given any integer $n \geq n_0$ and any sufficiently large $m \in \NN_0$ such that $\{{\boldsymbol{\sigma}'(0)},\dots,{\boldsymbol{\sigma}'(n)}\} \subseteq \{{\boldsymbol{\sigma}(0)},\dots,{\boldsymbol{\sigma}(m)}\}$, we have 
\begin{equation*}
\begin{split}
\abs{\sum_{j=0}^n {\alpha_{\boldsymbol{\sigma}'(j)}} - \sum_{j=0}^m {\alpha_{\boldsymbol{\sigma}(j)}}} & = \abs{\sum_{j=0}^m {\alpha_{\boldsymbol{\sigma}(j)}} - \sum_{j=0}^n {\alpha_{\boldsymbol{\sigma}'(j)}}} = \abs{\sum_{j=0}^m {\alpha_{\boldsymbol{\sigma}(j)}} {x_{j,n}}}  \\
& = \abs{\sum_{j = m_0+1}^m {\alpha_{\boldsymbol{\sigma}(j)}} {x_{j,n}}}  \leq \sum_{j = m_0+1}^m {\abs{\alpha_{\boldsymbol{\sigma}(j)} {x_{j,n}}}}  \\
& = \sum_{j = m_0+1}^m {\abs{\alpha_{\boldsymbol{\sigma}(j)}} {x_{j,n}}}  \leq  \sum_{j = m_0+1}^m \abs{\alpha_{\boldsymbol{\sigma}(j)}} \leq \epsilon ,
\end{split}
\end{equation*}
where $(x_{j,n})_{j=0}^m$ is a binary sequence, i.e., $x_{j,n} \in \{0,1\}$, such that $x_{j,n} = 0$ if and only if ${\boldsymbol{\sigma}(j)} \in \{{\boldsymbol{\sigma}'(0)},\dots,{\boldsymbol{\sigma}'(n)}\}$. For every $j \in \{0,\dots,m_0\}$, we have $x_{j,n}=0$ since ${\boldsymbol{\sigma}(j)} \in \{{\boldsymbol{\sigma}'(0)},\dots,{\boldsymbol{\sigma}'(n_0)}\} \subseteq \{{\boldsymbol{\sigma}'(0)},\dots,{\boldsymbol{\sigma}'(n)}\}$ due to the selection of $n_0$. Note that the first inequality follows from the triangle inequality. Given the integer $n$ and by taking the limit as $m \to \infty$, we obtain 
\begin{equation*}
\abs{\sum_{j=0}^n {\alpha_{\boldsymbol{\sigma}'(j)}} - {\ell}_{\boldsymbol{\sigma}}} \leq \epsilon   ,
\end{equation*}
because $\abs{x}$ is a continuous function and $\lim_{m \to \infty} {\sum_{j=0}^m {\alpha_{\boldsymbol{\sigma}(j)}}} = {\ell}_{\boldsymbol{\sigma}}$. Consequently, we have shown that for any $\epsilon > 0$, there is an $n_0 = n_0(\epsilon) \in \NN_0$ such that: if $n_0 \leq n \in \NN_0$ then $\abs{\sum_{j=0}^n {\alpha_{\boldsymbol{\sigma}'(j)}} - {\ell}_{\boldsymbol{\sigma}}} \leq \epsilon$. In other words, $\sum_{j=0}^{\infty} {\alpha_{\boldsymbol{\sigma}'(j)}} = \lim_{n \to \infty} {\sum_{j=0}^n {\alpha_{\boldsymbol{\sigma}'(j)}}} = {\ell}_{\boldsymbol{\sigma}}$, and therefore we have ${\ell}_{\boldsymbol{\sigma}} = {\ell}_{\boldsymbol{\sigma}'}$.  
\end{proof}

\subsection{Multiple Series of Complex-Valued Functions}

\begin{definition}[Unconditional pointwise and uniform convergence of multiple infinite series of functions]
Let $d \in \NN$ and $(f_{\mathbf{n}})_{\mathbf{n} \in \NN_0^d}$ be a sequence of complex-valued (multivariate) functions defined on a set $\mathcal{S}$. We say that the infinite series $\sum_{\mathbf{n} \in \NN_0^d} {f_{\mathbf{n}}(\mathbf{z})}$ is \emph{unconditionally pointwise (resp. uniformly) convergent} on $\mathcal{S}$ if, and only if, there exists a function $\overline{f} \colon \mathcal{S} \to \CC$ such that, for every bijection $\boldsymbol{\sigma} \colon \NN_0 \to \NN_0^d$, the ordinary series $\sum_{j=0}^{\infty} {f_{\boldsymbol{\sigma}(j)}(\mathbf{z})}$ converges pointwise (resp. uniformly) on $\mathcal{S}$, in the usual sense, to $\overline{f}(\mathbf{z})$. The unique function $\overline{f}$, which is independent of rearrangements of the sequence $(f_{\mathbf{n}})_{\mathbf{n} \in \NN_0^d}$, is called the pointwise (resp. uniform) limiting function/sum of the series and we write $\sum_{\mathbf{n} \in \NN_0^d} {f_{\mathbf{n}}(\mathbf{z})} = \overline{f}(\mathbf{z})$ (resp. $\sum_{\mathbf{n} \in \NN_0^d} {f_{\mathbf{n}}(\mathbf{z})} \; \stackrel{\mathclap{\mathrm{unif.}}}{=} \; \overline{f}(\mathbf{z})$) on $\mathcal{S}$. 
\end{definition}

Note that for a single ($d=1$) infinite series $\sum_{n \in \NN_0} {f_n(\mathbf{z})}$, unconditional pointwise (resp. uniform) convergence implies standard pointwise (resp. uniform) convergence on the same set and to the same limiting function.

\begin{proposition}[Unconditional uniform convergence implies unconditional pointwise convergence] \label{prop:Unconditional_uniform_convergence_implies_unconditional_pointwise_convergence}
If a multiple series of complex-valued functions $\sum_{\mathbf{n} \in \NN_0^d} {f_{\mathbf{n}}(\mathbf{z})}$ is unconditionally uniformly convergent on a set $\mathcal{S}$ to a function $\overline{f}(\mathbf{z})$, then the series is unconditionally pointwise convergent on $\mathcal{S}$ to $\overline{f}(\mathbf{z})$. Symbolically, we can write $\sum_{\mathbf{n} \in \NN_0^d} {f_{\mathbf{n}}(\mathbf{z})} \; \stackrel{\mathclap{\mathrm{unif.}}}{=} \; \overline{f}(\mathbf{z})$ on $\mathcal{S}$ $\implies$ $\sum_{\mathbf{n} \in \NN_0^d} {f_{\mathbf{n}}(\mathbf{z})} = \overline{f}(\mathbf{z})$ on $\mathcal{S}$.
\end{proposition}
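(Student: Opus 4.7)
The plan is to prove this essentially directly from the definitions, using only the standard one-variable fact that uniform convergence of an ordinary series implies its pointwise convergence to the same limit.

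First, I would unpack the hypothesis. By the definition of unconditional uniform convergence, the existence of the function $\overline{f} \colon \mathcal{S} \to \CC$ is given, and for \emph{every} bijection $\boldsymbol{\sigma} \colon \NN_0 \to \NN_0^d$ the ordinary (single-indexed) series $\sum_{j=0}^{\infty} f_{\boldsymbol{\sigma}(j)}(\mathbf{z})$ converges uniformly on $\mathcal{S}$ to $\overline{f}(\mathbf{z})$. The goal is then to verify the definition of unconditional pointwise convergence, which requires the existence of \emph{some} function $\widetilde{f} \colon \mathcal{S} \to \CC$ such that for every bijection $\boldsymbol{\sigma}$ the rearranged series converges pointwise on $\mathcal{S}$ to $\widetilde{f}(\mathbf{z})$.

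Second, I would take $\widetilde{f} \coloneqq \overline{f}$ and fix an arbitrary bijection $\boldsymbol{\sigma} \colon \NN_0 \to \NN_0^d$ together with an arbitrary point $\mathbf{z}_0 \in \mathcal{S}$. The single-variable series $\sum_{j=0}^{\infty} f_{\boldsymbol{\sigma}(j)}(\mathbf{z})$ converges uniformly on $\mathcal{S}$ to $\overline{f}(\mathbf{z})$ by hypothesis, and uniform convergence of a sequence of complex-valued functions on $\mathcal{S}$ trivially implies pointwise convergence at every point of $\mathcal{S}$ to the same limit (this is immediate from the Cauchy-style $\epsilon$ definitions: the uniform $N = N(\epsilon)$ works as a pointwise $N = N(\epsilon,\mathbf{z}_0)$). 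Applying this to the sequence of partial sums $\bigl(\sum_{j=0}^n f_{\boldsymbol{\sigma}(j)}(\mathbf{z})\bigr)_{n \in \NN_0}$ yields $\sum_{j=0}^{\infty} f_{\boldsymbol{\sigma}(j)}(\mathbf{z}_0) = \overline{f}(\mathbf{z}_0)$.

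Since both $\boldsymbol{\sigma}$ and $\mathbf{z}_0$ were arbitrary, every rearrangement converges pointwise on all of $\mathcal{S}$ to $\overline{f}$, which is precisely the definition of unconditional pointwise convergence to $\overline{f}$. Uniqueness of the limiting function $\widetilde{f}$ needs no further work because $\widetilde{f} = \overline{f}$ was selected at the outset.

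There is no real obstacle here: the statement is a bookkeeping consequence of the definitions layered onto the classical implication ``uniform $\Rightarrow$ pointwise.'' The only minor care point is keeping clear that the bijection $\boldsymbol{\sigma}$ in the conclusion must range over \emph{all} bijections, which is automatic because we used an arbitrary $\boldsymbol{\sigma}$ drawn from the universal quantifier in the hypothesis.
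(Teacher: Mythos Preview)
Your proof is correct and takes essentially the same approach as the paper, which simply notes that the statement is a direct consequence of the standard fact that uniform convergence implies pointwise convergence to the same limit. Your version is just a more detailed unpacking of the definitions around that same core observation.
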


\begin{proof}
Direct consequence of the fact that (standard) uniform convergence implies (standard) pointwise convergence on the same set and to the same limiting function. 
\end{proof}

\begin{proposition}[Equivalent definition of unconditional pointwise convergence] \label{prop:Equivalent_definition_of_unconditional_pointwise_convergence}
The multiple series of complex-valued functions $\sum_{\mathbf{n} \in \NN_0^d} {f_{\mathbf{n}}(\mathbf{z})}$ is unconditionally pointwise convergent on $\mathcal{S}$ if and only if for every $\mathbf{z}_0 \in \mathcal{S}$ the multiple series of complex numbers $\sum_{\mathbf{n} \in \NN_0^d} {f_{\mathbf{n}}(\mathbf{z}_0)}$ is unconditionally convergent. 
\end{proposition}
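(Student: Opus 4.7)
The plan is to prove this equivalence by a direct unwrapping of the two definitions, verifying that ``unconditional pointwise convergence'' of a series of functions on $\mathcal{S}$ is nothing more than the conjunction, over all $\mathbf{z}_0 \in \mathcal{S}$, of ``unconditional convergence'' of the corresponding numerical series. Both implications should follow from essentially tautological reasoning, but the bookkeeping around the ``independence of the bijection'' clause requires a little care, and that is where I would spend most of the attention.

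For the forward direction, I would start by assuming that $\sum_{\mathbf{n} \in \NN_0^d} f_{\mathbf{n}}(\mathbf{z})$ is unconditionally pointwise convergent on $\mathcal{S}$, so there exists a function $\overline{f} \colon \mathcal{S} \to \CC$ such that, for every bijection $\boldsymbol{\sigma} \colon \NN_0 \to \NN_0^d$, the ordinary series $\sum_{j=0}^{\infty} f_{\boldsymbol{\sigma}(j)}(\mathbf{z})$ converges pointwise on $\mathcal{S}$ to $\overline{f}(\mathbf{z})$. Then I would fix an arbitrary $\mathbf{z}_0 \in \mathcal{S}$ and observe that, by pointwise evaluation at $\mathbf{z}_0$, the numerical series $\sum_{j=0}^{\infty} f_{\boldsymbol{\sigma}(j)}(\mathbf{z}_0)$ converges in $\CC$ to $\overline{f}(\mathbf{z}_0)$ for every bijection $\boldsymbol{\sigma}$. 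Setting $\ell := \overline{f}(\mathbf{z}_0)$, which is manifestly independent of $\boldsymbol{\sigma}$, this matches exactly the definition of unconditional convergence of $\sum_{\mathbf{n} \in \NN_0^d} f_{\mathbf{n}}(\mathbf{z}_0)$.

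For the reverse direction, I would assume that for each $\mathbf{z}_0 \in \mathcal{S}$ the series $\sum_{\mathbf{n} \in \NN_0^d} f_{\mathbf{n}}(\mathbf{z}_0)$ is unconditionally convergent, so there exists a (unique) $\ell(\mathbf{z}_0) \in \CC$ such that $\sum_{j=0}^{\infty} f_{\boldsymbol{\sigma}(j)}(\mathbf{z}_0) = \ell(\mathbf{z}_0)$ for every bijection $\boldsymbol{\sigma} \colon \NN_0 \to \NN_0^d$. Define the candidate limit function $\overline{f} \colon \mathcal{S} \to \CC$ by $\overline{f}(\mathbf{z}_0) := \ell(\mathbf{z}_0)$. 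The crucial point to verify is that the same $\overline{f}$ works for every bijection simultaneously, and this is precisely the content of the independence statement: for any fixed $\boldsymbol{\sigma}$ and for every $\mathbf{z}_0 \in \mathcal{S}$, the partial sums $\sum_{j=0}^{n} f_{\boldsymbol{\sigma}(j)}(\mathbf{z}_0)$ converge in $\CC$ to $\overline{f}(\mathbf{z}_0)$, which is exactly pointwise convergence of the ordinary series $\sum_{j=0}^{\infty} f_{\boldsymbol{\sigma}(j)}(\mathbf{z})$ on $\mathcal{S}$ to $\overline{f}(\mathbf{z})$. Since this holds for every $\boldsymbol{\sigma}$ with the same $\overline{f}$, the definition of unconditional pointwise convergence is satisfied.

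The only ``hard part'' worth flagging is conceptual rather than technical: in both directions one must carefully distinguish between the pointwise limit (which lives in $\CC$ for each fixed $\mathbf{z}_0$) and the limiting function (which lives in $\CC^{\mathcal{S}}$), and confirm that the ``independence of $\boldsymbol{\sigma}$'' clause transfers correctly between these two levels. Once that point is stated cleanly, the proof reduces to a two-line evaluation in each direction, and no tools beyond the definitions of Section~\ref{sec:Preliminaries_Convergence_of_Multiple_Infinite_Series} are needed.
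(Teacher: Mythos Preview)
Your proposal is correct and follows essentially the same approach as the paper's own proof: both directions are handled by directly unwrapping the definitions, setting $\ell \coloneqq \overline{f}(\mathbf{z}_0)$ in the forward direction and $\overline{f}(\mathbf{z}_0) \coloneqq \ell(\mathbf{z}_0)$ in the reverse direction, with the independence-of-$\boldsymbol{\sigma}$ clause carried across by tautology. The paper's argument is slightly terser but identical in content and structure.
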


\begin{proof}
If $\sum_{\mathbf{n} \in \NN_0^d} {f_{\mathbf{n}}(\mathbf{z})}$ is unconditionally pointwise convergent, then there is a function $\overline{f} \colon \mathcal{S} \to \CC$ such that, for every bijection $\boldsymbol{\sigma} \colon \NN_0 \to \NN_0^d$ and every $\mathbf{z}_0 \in \mathcal{S}$, we have $\sum_{j=0}^{\infty} {f_{\boldsymbol{\sigma}(j)}(\mathbf{z}_0)} = \overline{f}(\mathbf{z}_0)$. Hence, for every $\mathbf{z}_0 \in \mathcal{S}$ there is an $\ell \coloneqq \overline{f}(\mathbf{z}_0) \in \CC$ such that for every bijection $\boldsymbol{\sigma} \colon \NN_0 \to \NN_0^d$ it holds that $\sum_{j=0}^{\infty} {f_{\boldsymbol{\sigma}(j)}(\mathbf{z}_0)} = \ell$, i.e., the series $\sum_{\mathbf{n} \in \NN_0^d} {f_{\mathbf{n}}(\mathbf{z}_0)}$ is unconditionally convergent.

Conversely, suppose that for every $\mathbf{z}_0 \in \mathcal{S}$ there is an $\ell = \ell(\mathbf{z}_0) \in \CC$ such that for every bijection $\boldsymbol{\sigma} \colon \NN_0 \to \NN_0^d$ we have $\sum_{j=0}^{\infty} {f_{\boldsymbol{\sigma}(j)}(\mathbf{z}_0)} = \ell$. Let us choose the function $\overline{f}(\mathbf{z}) \coloneqq \ell(\mathbf{z})$, for all $\mathbf{z} \in \mathcal{S}$. Then, for every bijection $\boldsymbol{\sigma} \colon \NN_0 \to \NN_0^d$ and every $\mathbf{z}_0 \in \mathcal{S}$ it holds that $\sum_{j=0}^{\infty} {f_{\boldsymbol{\sigma}(j)}(\mathbf{z}_0)} = \overline{f}(\mathbf{z}_0)$, i.e., the series $\sum_{\mathbf{n} \in \NN_0^d} {f_{\mathbf{n}}(\mathbf{z})}$ is unconditionally pointwise convergent on $\mathcal{S}$. 
\end{proof}

A uniformly convergent series of functions can potentially be reordered into a non-uniformly convergent series. Therefore, we need sufficient conditions to rule out this possibility. In particular, unconditional uniform convergence can be achieved by generalizing the Weierstrass M-test.

\begin{lemma}[Classical Weierstrass M-test for single infinite series of functions {\cite[Theorem~7.10]{Rudin_1976}}] \label{lem:Classical_Weierstrass_M-test}
Let $(f_n)_{n \in \NN_0}$ be a sequence of complex-valued (multivariate) functions defined on a set $\mathcal{S}$. Assume that there exists a sequence of nonnegative numbers $(M_n)_{n \in \NN_0}$ such that: i) $\abs{f_n(\mathbf{z})} \leq M_n$, for all $n \in \NN_0$ and $\mathbf{z} \in \mathcal{S}$, and ii) the series $\sum_{n \in \NN_0} {M_n}$ is (absolutely) convergent. Then, the infinite series $\sum_{n \in \NN_0} {f_n(\mathbf{z})}$ is uniformly convergent on $\mathcal{S}$. 
\end{lemma}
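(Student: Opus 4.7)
The plan is to apply the standard Cauchy criterion for uniform convergence, which is available because the codomain $\CC$ is complete. I would define the partial sums $S_n(\mathbf{z}) \coloneqq \sum_{k=0}^n f_k(\mathbf{z})$ for all $n \in \NN_0$ and $\mathbf{z} \in \mathcal{S}$, and it then suffices to show that the sequence $(S_n)_{n \in \NN_0}$ is \emph{uniformly Cauchy} on $\mathcal{S}$, i.e., that for every $\epsilon > 0$ there exists $N = N(\epsilon) \in \NN_0$ such that $\abs{S_m(\mathbf{z}) - S_k(\mathbf{z})} < \epsilon$ for all integers $m > k \geq N$ and all $\mathbf{z} \in \mathcal{S}$ simultaneously.

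Fix $\epsilon > 0$. The series $\sum_{n \in \NN_0} M_n$ of nonnegative real numbers converges by hypothesis, so by the Cauchy convergence criterion for ordinary real series (e.g., \cite[Theorem~3.22]{Rudin_1976}, already invoked in the paper) there exists an $N \in \NN_0$ such that $\sum_{n=k+1}^m M_n < \epsilon$ for all $m > k \geq N$. For any such $m, k$ and any $\mathbf{z} \in \mathcal{S}$, the triangle inequality combined with the majorization $\abs{f_n(\mathbf{z})} \leq M_n$ yields
\begin{equation*}
\abs{S_m(\mathbf{z}) - S_k(\mathbf{z})} = \abs{\sum_{n=k+1}^m f_n(\mathbf{z})} \leq \sum_{n=k+1}^m \abs{f_n(\mathbf{z})} \leq \sum_{n=k+1}^m M_n < \epsilon ,
\end{equation*}
and crucially this bound is independent of $\mathbf{z}$, which establishes the uniform Cauchy property.

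Since $(\CC,\abs{\cdot})$ is complete, for each fixed $\mathbf{z} \in \mathcal{S}$ the ordinary Cauchy sequence $(S_n(\mathbf{z}))_{n \in \NN_0}$ converges to a complex limit, and I would denote it by $\overline{f}(\mathbf{z})$, thereby defining a function $\overline{f} \colon \mathcal{S} \to \CC$ as the pointwise limit of the partial sums. Letting $m \to \infty$ in the preceding inequality while keeping $k \geq N$ fixed, and invoking continuity of $\abs{\cdot}$, produces $\abs{\overline{f}(\mathbf{z}) - S_k(\mathbf{z})} \leq \epsilon$ for all $k \geq N$ and all $\mathbf{z} \in \mathcal{S}$, which is precisely the definition of uniform convergence of $(S_n)$ to $\overline{f}$ on $\mathcal{S}$; equivalently, $\sum_{n \in \NN_0} f_n(\mathbf{z})$ converges uniformly to $\overline{f}(\mathbf{z})$ on $\mathcal{S}$, as claimed.

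There is essentially no serious obstacle here: this is a classical textbook result, as evidenced by the citation to \cite[Theorem~7.10]{Rudin_1976} in the lemma statement itself. The only points requiring minor care are (i) invoking completeness of $\CC$ to upgrade the uniform Cauchy property to the actual existence of a limit function $\overline{f}$, and (ii) verifying that the tail estimate $\sum_{n=k+1}^m M_n$ truly decouples from $\mathbf{z}$, which is automatic because the majorants $M_n$ depend only on $n$. Consequently, the proof can be written very compactly, and no additional machinery beyond the real-variable Cauchy criterion is required.
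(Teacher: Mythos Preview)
Your proof is correct and is essentially the classical argument: the paper does not supply its own proof of this lemma but simply cites \cite[Theorem~7.10]{Rudin_1976}, and your Cauchy-criterion approach is precisely the standard textbook proof found there.
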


\begin{proposition}[Generalized Weierstrass M-test for multiple infinite series of functions] \label{prop:Generalized_Weierstrass_M-test}
Let $d \in \NN$ and $(f_{\mathbf{n}})_{\mathbf{n} \in \NN_0^d}$ be a sequence of complex-valued (multivariate) functions defined on a set $\mathcal{S}$. Suppose that there exists a sequence of nonnegative numbers $(M_{\mathbf{n}})_{\mathbf{n} \in \NN_0^d}$ such that: i) $\abs{f_{\mathbf{n}}(\mathbf{z})} \leq M_{\mathbf{n}}$, for all $\mathbf{n} \in \NN_0^d$ and $\mathbf{z} \in \mathcal{S}$, and ii) the multiple series $\sum_{\mathbf{n} \in \NN_0^d} {M_{\mathbf{n}}}$ is absolutely convergent. Then, the infinite series $\sum_{\mathbf{n} \in \NN_0^d} {f_{\mathbf{n}}(\mathbf{z})}$ is unconditionally uniformly convergent on $\mathcal{S}$. 
\end{proposition}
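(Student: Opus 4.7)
The plan is to reduce the statement to the classical Weierstrass M-test (Lemma~\ref{lem:Classical_Weierstrass_M-test}) by applying it to each rearrangement separately, and then to use absolute convergence to identify all the resulting limit functions.

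First, I would fix an arbitrary bijection $\boldsymbol{\sigma} \colon \NN_0 \to \NN_0^d$ and consider the rearranged single series $\sum_{j=0}^{\infty} f_{\boldsymbol{\sigma}(j)}(\mathbf{z})$. Since $M_{\mathbf{n}} \geq 0$, absolute convergence of $\sum_{\mathbf{n} \in \NN_0^d} M_{\mathbf{n}}$ in the sense of the paper coincides with the supremum over finite subsums being finite. By Proposition~\ref{prop:Absolute_convergence_implies_unconditional_convergence} applied to the real series $\sum_{\mathbf{n} \in \NN_0^d} M_{\mathbf{n}}$, the rearranged series $\sum_{j=0}^{\infty} M_{\boldsymbol{\sigma}(j)}$ converges (to a value independent of $\boldsymbol{\sigma}$). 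Combined with the majorization $\abs{f_{\boldsymbol{\sigma}(j)}(\mathbf{z})} \leq M_{\boldsymbol{\sigma}(j)}$ for all $j \in \NN_0$ and $\mathbf{z} \in \mathcal{S}$, the classical Weierstrass M-test yields that $\sum_{j=0}^{\infty} f_{\boldsymbol{\sigma}(j)}(\mathbf{z})$ converges uniformly on $\mathcal{S}$ to some limit function $\overline{f}_{\boldsymbol{\sigma}} \colon \mathcal{S} \to \CC$.

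Second, I would show that $\overline{f}_{\boldsymbol{\sigma}}$ does not depend on $\boldsymbol{\sigma}$. Pick any $\mathbf{z}_0 \in \mathcal{S}$. Since $\abs{f_{\mathbf{n}}(\mathbf{z}_0)} \leq M_{\mathbf{n}}$, the monotonicity of the supremum over finite subsets of $\NN_0^d$ gives
\begin{equation*}
\sum_{\mathbf{n} \in \NN_0^d} \abs{f_{\mathbf{n}}(\mathbf{z}_0)} \;\leq\; \sum_{\mathbf{n} \in \NN_0^d} M_{\mathbf{n}} \;<\; \infty ,
\end{equation*}
so $\sum_{\mathbf{n} \in \NN_0^d} f_{\mathbf{n}}(\mathbf{z}_0)$ is absolutely convergent, hence unconditionally convergent by Proposition~\ref{prop:Absolute_convergence_implies_unconditional_convergence}. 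Thus the value of $\sum_{j=0}^{\infty} f_{\boldsymbol{\sigma}(j)}(\mathbf{z}_0)$ is the same for every bijection $\boldsymbol{\sigma}$, and defining $\overline{f}(\mathbf{z}_0)$ as this common value yields $\overline{f}_{\boldsymbol{\sigma}}(\mathbf{z}_0) = \overline{f}(\mathbf{z}_0)$ for every $\boldsymbol{\sigma}$ and every $\mathbf{z}_0 \in \mathcal{S}$. Putting the two steps together, for every bijection $\boldsymbol{\sigma}$ the series $\sum_{j=0}^{\infty} f_{\boldsymbol{\sigma}(j)}(\mathbf{z})$ converges uniformly on $\mathcal{S}$ to the same function $\overline{f}$, which is exactly the definition of unconditional uniform convergence of $\sum_{\mathbf{n} \in \NN_0^d} f_{\mathbf{n}}(\mathbf{z})$ on $\mathcal{S}$.

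There is essentially no hard obstacle: the only subtle points are (i) ensuring that for each rearrangement the partial sums of $M_{\boldsymbol{\sigma}(j)}$ indeed form a convergent series (handled by invoking Proposition~\ref{prop:Absolute_convergence_implies_unconditional_convergence} on the nonnegative-term series) so that the classical M-test applies, and (ii) verifying that the uniform limits arising from different bijections all coincide pointwise (handled by a separate application of Proposition~\ref{prop:Absolute_convergence_implies_unconditional_convergence} to the complex-valued series at each fixed $\mathbf{z}_0$). Both reductions are immediate from the results already stated in the excerpt, so the proof should be short.
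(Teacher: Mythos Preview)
Your proposal is correct and follows essentially the same route as the paper's proof: fix a bijection, apply the classical Weierstrass M-test (Lemma~\ref{lem:Classical_Weierstrass_M-test}) to obtain uniform convergence to some $\overline{f}_{\boldsymbol{\sigma}}$, and then use Proposition~\ref{prop:Absolute_convergence_implies_unconditional_convergence} pointwise to conclude that $\overline{f}_{\boldsymbol{\sigma}}$ is independent of $\boldsymbol{\sigma}$. The only cosmetic difference is that the paper bounds $\sum_{j=0}^{\infty} M_{\boldsymbol{\sigma}(j)}$ directly by the supremum $\sum_{\mathbf{n}\in\NN_0^d}\abs{M_{\mathbf{n}}}$ rather than invoking Proposition~\ref{prop:Absolute_convergence_implies_unconditional_convergence} for the $M$-series, but this is immaterial.
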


\begin{proof}
Let $\boldsymbol{\sigma} \colon \NN_0 \to \NN_0^d$ be a particular, but arbitrarily chosen, bijection. From the assumptions, it holds that $\abs{f_{\boldsymbol{\sigma}(j)}(\mathbf{z})} \leq M_{\boldsymbol{\sigma}(j)}$ for all $j \in \NN_0$ and $\mathbf{z} \in \mathcal{S}$, and $\sum_{j=0}^{\infty} {M_{\boldsymbol{\sigma}(j)}} = \sum_{j=0}^{\infty} \abs{M_{\boldsymbol{\sigma}(j)}} \leq \sum_{\mathbf{n} \in \NN_0^d} \abs{M_{\mathbf{n}}} < \infty$. By the classical Weierstrass M-test (Lemma~\ref{lem:Classical_Weierstrass_M-test}), we know that the ordinary series $\sum_{j=0}^{\infty} {f_{\boldsymbol{\sigma}(j)}(\mathbf{z})}$ converges uniformly on $\mathcal{S}$ to a function $\overline{f}_{\boldsymbol{\sigma}} \colon \mathcal{S} \to \CC$. Now, we have to prove that the limiting function $\overline{f}_{\boldsymbol{\sigma}}$ is independent of $\boldsymbol{\sigma}$. Equivalently, we will show that $\overline{f}_{\boldsymbol{\sigma}}(\mathbf{z}) = \overline{f}_{\boldsymbol{\sigma}'}(\mathbf{z})$ for all $\mathbf{z} \in \mathcal{S}$ and all bijections $\boldsymbol{\sigma}$ and $\boldsymbol{\sigma}'$ (from $\NN_0$ to $\NN_0^d$). For this purpose let $\mathbf{z}_0 \in \mathcal{S}$ be fixed, and $\boldsymbol{\sigma}, \boldsymbol{\sigma}'$ be two such bijections with associated limiting functions $\overline{f}_{\boldsymbol{\sigma}}$ and $\overline{f}_{\boldsymbol{\sigma}'}$, respectively. It holds that $\sum_{\mathbf{n} \in \NN_0^d} \abs{f_{\mathbf{n}}(\mathbf{z}_0)} \leq \sum_{\mathbf{n} \in \NN_0^d} \abs{M_{\mathbf{n}}} < \infty$, hence the series $\sum_{\mathbf{n} \in \NN_0^d} {f_{\mathbf{n}}(\mathbf{z}_0)}$ is absolutely convergent and, by Proposition~\ref{prop:Absolute_convergence_implies_unconditional_convergence}, unconditionally convergent as well. This means that there is an $\ell_0 \in \CC$ such that $\sum_{j=0}^{\infty} {f_{\boldsymbol{\sigma}(j)}(\mathbf{z}_0)} = \sum_{j=0}^{\infty} {f_{\boldsymbol{\sigma}'(j)}(\mathbf{z}_0)} = \ell_0$. In addition, uniform convergence implies pointwise convergence to the same limiting function, therefore $\overline{f}_{\boldsymbol{\sigma}}(\mathbf{z}_0) = \sum_{j=0}^{\infty} {f_{\boldsymbol{\sigma}(j)}(\mathbf{z}_0)} = \ell_0$ and $\overline{f}_{\boldsymbol{\sigma}'}(\mathbf{z}_0) = \sum_{j=0}^{\infty} {f_{\boldsymbol{\sigma}'(j)}(\mathbf{z}_0)} = \ell_0$. As a consequence, it follows that $\overline{f}_{\boldsymbol{\sigma}}(\mathbf{z}_0) = \overline{f}_{\boldsymbol{\sigma}'}(\mathbf{z}_0)$.  
\end{proof}

\subsection{Power Series in Several Complex Variables}

\begin{proposition}[Sufficient conditions for unconditional uniform and pointwise convergence of multiple power series] \label{prop:Sufficient_condition_for_unconditional_uniform_and_pointwise_convergence_of_multiple_power_series}
Let $d \in \NN$ and $\sum_{\mathbf{n} \in \NN_0^d} {\alpha_{\mathbf{n}} {\mathbf{z}}^{\mathbf{n}}}$ be a power series in several complex variables $\mathbf{z} \in \CC^d$ with complex coefficients $\{\alpha_{\mathbf{n}}\}$. Suppose that there exist a constant $C \in \RR_{+}$ and functions $h_j \colon \NN_0 \to \RR_{+}$, $j \in \{1,\dots,d\}$, such that: $\abs{\alpha_{\mathbf{n}}} \leq C \prod_{j=1}^d {h_j(n_j)}$, for all $\mathbf{n} \in \NN_0^d$, and $\ell_j \coloneqq \left(\limsup_{n \to \infty}  \left(h_j(n)\right)^{1/n} \right)^{-1} > 0$, for all $j \in \{1,\dots,d\}$. Then, the multiple power series $\sum_{\mathbf{n} \in \NN_0^d} {\alpha_{\mathbf{n}} {\mathbf{z}}^{\mathbf{n}}}$ is unconditionally uniformly convergent on the poly-disk $\overline{\mathcal{D}}(\boldsymbol{\rho})$, where $\boldsymbol{\rho} = [\rho_1,\dots,\rho_d]^{\top} \in \RR_+^d$ is any poly-radius such that $\rho_j < \ell_j$ for all $j \in \{1,\dots,d\}$. In addition, the series $\sum_{\mathbf{n} \in \NN_0^d} {\alpha_{\mathbf{n}} {\mathbf{z}}^{\mathbf{n}}}$ is unconditionally pointwise convergent on the open poly-disk $\mathcal{D}(\boldsymbol{\ell})$, where $\boldsymbol{\ell} = [\ell_1,\dots,\ell_d]^{\top}$.
\end{proposition}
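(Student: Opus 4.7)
The plan is to reduce everything to the Generalized Weierstrass M-test (Proposition~\ref{prop:Generalized_Weierstrass_M-test}) by choosing majorants that factor across the $d$ coordinates, so that absolute convergence of the majorant series becomes a product of single-variable root-test computations.

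First I would fix a poly-radius $\boldsymbol{\rho} \in \RR_+^d$ with $\rho_j < \ell_j$ for every $j$, and define $M_{\mathbf{n}} \coloneqq C \prod_{j=1}^d h_j(n_j) \rho_j^{n_j}$. For any $\mathbf{z} \in \overline{\mathcal{D}}(\boldsymbol{\rho})$ we have $\abs{\alpha_{\mathbf{n}} \mathbf{z}^{\mathbf{n}}} \leq C \prod_j h_j(n_j) \abs{z_j}^{n_j} \leq M_{\mathbf{n}}$, which supplies hypothesis (i) of Proposition~\ref{prop:Generalized_Weierstrass_M-test}.

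The main step is to verify hypothesis (ii), namely $\sum_{\mathbf{n} \in \NN_0^d} M_{\mathbf{n}} < \infty$. Since $M_{\mathbf{n}} \geq 0$ factors as a product, for any finite $\mathcal{F} \subseteq \NN_0^d$ I can choose $N_j \coloneqq \max\{n_j : \mathbf{n} \in \mathcal{F}\}$ so that $\mathcal{F} \subseteq \prod_{j=1}^d \{0,\dots,N_j\}$, giving
\begin{equation*}
\sum_{\mathbf{n} \in \mathcal{F}} M_{\mathbf{n}} \leq C \sum_{\mathbf{n} \in \prod_j \{0,\dots,N_j\}} \prod_{j=1}^d h_j(n_j) \rho_j^{n_j} = C \prod_{j=1}^d \left( \sum_{n=0}^{N_j} h_j(n) \rho_j^n \right) \leq C \prod_{j=1}^d \left( \sum_{n=0}^{\infty} h_j(n) \rho_j^n \right).
\end{equation*}
Each single-variable series $\sum_{n=0}^{\infty} h_j(n) \rho_j^n$ converges by the Cauchy--Hadamard root test, since $\limsup_{n \to \infty} \bigl( h_j(n) \rho_j^n \bigr)^{1/n} = \rho_j/\ell_j < 1$. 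Taking the supremum over all finite $\mathcal{F}$ then shows $\sum_{\mathbf{n} \in \NN_0^d} M_{\mathbf{n}}$ is absolutely convergent, and Proposition~\ref{prop:Generalized_Weierstrass_M-test} delivers unconditional uniform convergence of $\sum_{\mathbf{n} \in \NN_0^d} \alpha_{\mathbf{n}} \mathbf{z}^{\mathbf{n}}$ on $\overline{\mathcal{D}}(\boldsymbol{\rho})$.

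For the second assertion, given any $\mathbf{z}_0 \in \mathcal{D}(\boldsymbol{\ell})$ I would select $\rho_j \in (\abs{z_{0,j}},\ell_j)$ for each $j$, so that $\mathbf{z}_0 \in \overline{\mathcal{D}}(\boldsymbol{\rho})$. The already-established unconditional uniform convergence on $\overline{\mathcal{D}}(\boldsymbol{\rho})$ implies, by Proposition~\ref{prop:Unconditional_uniform_convergence_implies_unconditional_pointwise_convergence}, unconditional pointwise convergence there, and hence unconditional convergence of the numerical series at $\mathbf{z}_0$. Since $\mathbf{z}_0 \in \mathcal{D}(\boldsymbol{\ell})$ was arbitrary, Proposition~\ref{prop:Equivalent_definition_of_unconditional_pointwise_convergence} yields unconditional pointwise convergence on the full open poly-disk $\mathcal{D}(\boldsymbol{\ell})$. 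The only subtle point I anticipate is the finite-box exhaustion argument used to factor the multi-sum; everything else is a direct invocation of the machinery already developed in this section.
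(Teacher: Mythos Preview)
Your proposal is correct and follows essentially the same approach as the paper: define the majorant $M_{\mathbf{n}} = C \prod_j h_j(n_j)\rho_j^{n_j}$, reduce absolute convergence of $\sum M_{\mathbf{n}}$ to a product of single-variable series handled by the root test, invoke the generalized Weierstrass M-test, and then deduce pointwise convergence on $\mathcal{D}(\boldsymbol{\ell})$ via Propositions~\ref{prop:Unconditional_uniform_convergence_implies_unconditional_pointwise_convergence} and~\ref{prop:Equivalent_definition_of_unconditional_pointwise_convergence}. The only cosmetic difference is that the paper proves the \emph{equality} $\sup_{\mathcal{F}\ \text{finite}} \sum_{\mathcal{F}} M_{\mathbf{n}} = C\prod_j \sum_n h_j(n)\rho_j^n$ via a two-sided box argument, whereas you (rightly) observe that the one-sided inequality coming from $\mathcal{F} \subseteq \prod_j\{0,\dots,N_j\}$ already suffices.
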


\begin{proof}
Let $\boldsymbol{\rho}$ be an arbitrary poly-radius such that $0 < \rho_j < \ell_j$ for all $j \in \{1,\dots,d\}$. Then, for all $\mathbf{n} \in \NN_0^d$ and $\mathbf{z} \in \overline{\mathcal{D}}(\boldsymbol{\rho})$, we obtain 
\begin{equation*}
\abs{\alpha_{\mathbf{n}} {\mathbf{z}}^{\mathbf{n}}} = \abs{\alpha_{\mathbf{n}}} \prod_{j=1}^d {\abs{z_j}^{n_j}} \leq C \prod_{j=1}^d {h_j(n_j) \abs{z_j}^{n_j}}  \leq  C \prod_{j=1}^d {h_j(n_j) {\rho_j^{n_j}}}  \eqqcolon  M_{\mathbf{n}}  . 
\end{equation*}
The multiple series $\sum_{\mathbf{n} \in \NN_0^d} {M_{\mathbf{n}}}$ is absolutely convergent, because 
\begin{align*} 
\sum_{\mathbf{n} \in \NN_0^d} \abs{M_{\mathbf{n}}} & = C \sup \underbrace{ \left\{ \sum_{\mathbf{n} \in \mathcal{F}} \prod_{j=1}^d {h_j(n_j) {\rho_j^{n_j}}} : \mathcal{F} \subseteq \NN_0^d,\, \mathcal{F}\ \mathrm{is\ finite} \right\} }_{\mathcal{A} \coloneqq}  \\
& = C \sup \underbrace{ \left\{ \sum_{\mathbf{n} \in \prod_{j=1}^d {\mathcal{F}_j}} \prod_{j=1}^d {h_j(n_j) {\rho_j^{n_j}}} :  \mathcal{F}_j \subseteq \NN_0,\, \mathcal{F}_j\ \mathrm{is\ finite},\, \forall j \in \{1,\dots,d\} \right\} }_{\mathcal{B} \coloneqq}  \\
& = C \sup \left\{ \prod_{j=1}^d \left( \sum_{n \in {\mathcal{F}_j}} {h_j(n) \rho_j^{n}} \right) :  \mathcal{F}_j \subseteq \NN_0,\, \mathcal{F}_j\ \mathrm{is\ finite},\, \forall j \in \{1,\dots,d\} \right\}     \\
& = C \prod_{j=1}^d \left( \sum_{n \in \NN_0} {h_j(n) \rho_j^{n}} \right) < \infty .
\end{align*} 
Here, the second equality (i.e., $\sup \mathcal{A} = \sup \mathcal{B}$) is true due to the following reasons: 1) the Cartesian product of finitely many finite sets is finite, so $\mathcal{B} \subseteq \mathcal{A}$ and therefore $\sup \mathcal{B} \leq \sup \mathcal{A}$, and 2) given any finite set $\mathcal{F}$ there are finite sets $\{\mathcal{F}_j\}_{j=1}^d$ whose  Cartesian product $\mathcal{G} \coloneqq \prod_{j=1}^d {\mathcal{F}_j} \supseteq \mathcal{F}$, so $\sum_{\mathbf{n} \in \mathcal{F}} (\cdot) \leq \sum_{\mathbf{n} \in \mathcal{G}} (\cdot)$ and hence $\sup \mathcal{A} \leq \sup \mathcal{B}$. Moreover, observe that the ordinary series $\sum_{n \in \NN_0} {h_j(n) \rho_j^{n}}$ is (absolutely) convergent, for all $j \in \{1,\dots,d\}$, since by the root test we have $\limsup_{n \to \infty} \abs{h_j(n) \rho_j^{n}}^{1/n} = \rho_j \ell_j^{-1} < 1$.  Now, the unconditional uniform convergence of the power series $\sum_{\mathbf{n} \in \NN_0^d} {\alpha_{\mathbf{n}} {\mathbf{z}}^{\mathbf{n}}}$ on the poly-disk $\overline{\mathcal{D}}(\boldsymbol{\rho})$ follows from the generalized Weierstrass M-test (Proposition~\ref{prop:Generalized_Weierstrass_M-test}). 

Finally, given any $\mathbf{z}_0 \in \mathcal{D}(\boldsymbol{\ell})$ the series $\sum_{\mathbf{n} \in \NN_0^d} {\alpha_{\mathbf{n}} {\mathbf{z}}^{\mathbf{n}}}$ is unconditionally uniformly convergent on $\overline{\mathcal{D}}(\abs{z_{0,1}}+\epsilon_1,\dots,\abs{z_{0,d}}+\epsilon_d)$ to a limiting function $\overline{f}(\mathbf{z})$, where the positive numbers $\{\epsilon_j\}$ are chosen sufficiently small so that $\abs{z_{0,j}} + \epsilon_j < \ell_j$ for all $j \in \{1,\dots,d\}$; for example, we can choose $\epsilon_j = (\ell_j - \abs{z_{0,j}})/2$. By Proposition~\ref{prop:Unconditional_uniform_convergence_implies_unconditional_pointwise_convergence}, we obtain $\sum_{\mathbf{n} \in \NN_0^d} {\alpha_{\mathbf{n}} {\mathbf{z}}^{\mathbf{n}}} = \overline{f}(\mathbf{z})$ for all $\mathbf{z} \in \overline{\mathcal{D}}(\abs{z_{0,1}}+\epsilon_1,\dots,\abs{z_{0,d}}+\epsilon_d)$, and therefore $\sum_{\mathbf{n} \in \NN_0^d} {\alpha_{\mathbf{n}} {\mathbf{z}}_0^{\mathbf{n}}} = \overline{f}(\mathbf{z}_0)$. In other words, we have shown that for every $\mathbf{z}_0 \in \mathcal{D}(\boldsymbol{\ell})$ the series $\sum_{\mathbf{n} \in \NN_0^d} {\alpha_{\mathbf{n}} {\mathbf{z}}_0^{\mathbf{n}}}$ is unconditionally convergent. Hence, by Proposition~\ref{prop:Equivalent_definition_of_unconditional_pointwise_convergence}, the series $\sum_{\mathbf{n} \in \NN_0^d} {\alpha_{\mathbf{n}} {\mathbf{z}}^{\mathbf{n}}}$ is unconditionally pointwise convergent on $\mathcal{D}(\boldsymbol{\ell})$. 
\end{proof}

Proposition~\ref{prop:Sufficient_condition_for_unconditional_uniform_and_pointwise_convergence_of_multiple_power_series} generalizes a well-known result for $d=1$: every single power series $\sum_{n=0}^{\infty} {\alpha_n z^n}$ with radius of convergence $R \coloneqq \left(\limsup_{n \to \infty} \abs{\alpha_n}^{1/n} \right)^{-1} > 0$ is pointwise convergent for $\abs{z} < R$ and uniformly convergent for $\abs{z} \leq \rho$, where $\rho$ is any positive number less than $R$ \cite[\S 1.21]{Titchmarsh_1939}.

\section{Useful Lemmas} \label{sec:Useful_Lemmas}

Next, we provide several helpful lemmas that are required to prove Theorems~\ref{thrm:Theorem_1}~and~\ref{thrm:Theorem_2}.

\subsection{Identities for Sums of Iverson's Brackets}

\begin{lemma}[Iverson-bracket sum identities] \label{lem:Iverson-bracket_sum_identities}
Let $m \in \NN$. Then, the following identities involving the Iverson bracket hold:
\begin{equation} \label{eq:Sum_q_r_Iverson_bracket_identity}
\sum_{q=0}^{\infty} \sum_{r=0}^{m-1} \iverson{n = q m + r} = 1 , \quad \forall n \in \NN_0 ,
\end{equation}
\begin{equation} \label{eq:Sum_n_Iverson_bracket_identity}
\sum_{n=0}^{\infty} \iverson{n = q m + r} = 1   , \quad \forall q \in \NN_0 , \ \forall r \in {\mathcal{R}(m)}  .
\end{equation}
\end{lemma}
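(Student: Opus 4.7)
The plan is to prove both identities by observing that, in each case, exactly one summand equals $1$ while all others vanish; this reduces the statements to a uniqueness assertion coming from the division algorithm on the nonnegative integers.

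For \eqref{eq:Sum_q_r_Iverson_bracket_identity}, I would fix $n \in \NN_0$ and invoke the division algorithm: since $m \in \NN$, there exist unique integers $q_0 \in \NN_0$ and $r_0 \in \{0,\dots,m-1\}$ with $n = q_0 m + r_0$, namely $q_0 = \lfloor n/m \rfloor$ and $r_0 = n \bmod m$. Hence, for $(q,r) \in \NN_0 \times \mathcal{R}(m)$, the statement $n = qm + r$ is true if and only if $(q,r) = (q_0,r_0)$. By the definition of the Iverson bracket, every term in the double sum is $0$ except for the single term indexed by $(q_0, r_0)$, which equals $1$. Therefore
\begin{equation*}
\sum_{q=0}^{\infty} \sum_{r=0}^{m-1} \iverson{n = qm + r} = \iverson{n = q_0 m + r_0} = 1.
\end{equation*}

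For \eqref{eq:Sum_n_Iverson_bracket_identity}, I would fix $q \in \NN_0$ and $r \in \mathcal{R}(m)$, and set $n_0 \coloneqq qm + r \in \NN_0$. Then for any $n \in \NN_0$ the statement $n = qm + r$ is equivalent to $n = n_0$, so exactly one Iverson bracket in the sum (the one with index $n = n_0$) is $1$, and all others are $0$. Consequently
\begin{equation*}
\sum_{n=0}^{\infty} \iverson{n = qm + r} = \iverson{n_0 = qm + r} = 1.
\end{equation*}

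Since both series contain only a single nonzero term, convergence is automatic and no further analytic considerations (absolute convergence, rearrangement, etc.) are needed. There is no genuine obstacle to overcome here; the only care required is articulating the uniqueness of the quotient–remainder pair in the division algorithm, which underlies both identities.
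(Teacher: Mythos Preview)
Your proposal is correct and follows essentially the same approach as the paper: both identities are reduced to the uniqueness assertion in the quotient--remainder theorem, with \eqref{eq:Sum_q_r_Iverson_bracket_identity} using the unique decomposition $n = \lfloor n/m \rfloor \cdot m + (n \bmod m)$ and \eqref{eq:Sum_n_Iverson_bracket_identity} using the fact that $qm+r$ determines a unique $n \in \NN_0$. Your write-up is slightly more detailed than the paper's, but the argument is identical.
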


\begin{proof}
The former identity is an immediate consequence of the quotient-remainder theorem: given any $m \in \NN$ and $n \in \NN_0$, there exist unique integers $q \in \NN_0$ and $r \in {\mathcal{R}(m)}$ such that $n = q m + r$; in particular, $q = \lfloor n/m \rfloor$ and $r = n \bmod m$. The latter identity follows from the simple fact that, given any $m \in \NN$, $q \in \NN_0$ and $r \in {\mathcal{R}(m)}$, there exists a unique integer $n \in \NN_0$ such that $n = q m + r$. 
\end{proof}

\begin{lemma}[Multi-dimensional Iverson-bracket sum identities] \label{lem:Multi-dimensional_Iverson-bracket_sum_identities}
Let $\mathbf{m} \in \NN^d$. Then, we have the following identities involving the Iverson bracket: 
\begin{equation} \label{eq:Sum_q_r_Iverson_bracket_identity_multi-dimensional}
\sum_{\mathbf{q} \in \NN_0^d} \sum_{\mathbf{r} \in {\mathcal{R}(\mathbf{m})}} \iverson{\mathbf{n} = \mathbf{q} \odot \mathbf{m} + \mathbf{r}} = 1 , \quad \forall \mathbf{n} \in \NN_0^d .
\end{equation}
\begin{equation} \label{eq:Sum_n_Iverson_bracket_identity_multi-dimensional}
\sum_{\mathbf{n} \in \NN_0^d} \iverson{\mathbf{n} = \mathbf{q} \odot \mathbf{m} + \mathbf{r}} = 1   , \quad \forall \mathbf{q} \in \NN_0^d , \ \forall \mathbf{r} \in {\mathcal{R}(\mathbf{m})}  .
\end{equation}
\end{lemma}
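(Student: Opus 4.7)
The plan is to reduce both multi-dimensional identities to their one-dimensional counterparts in Lemma~\ref{lem:Iverson-bracket_sum_identities} via a component-wise factorization of the vector Iverson bracket. The key observation is that, by definition of component-wise equality and the Hadamard product, the vector equation $\mathbf{n} = \mathbf{q} \odot \mathbf{m} + \mathbf{r}$ holds if and only if all $d$ scalar equations $n_j = q_j m_j + r_j$ hold simultaneously, so that
\[ \iverson{\mathbf{n} = \mathbf{q} \odot \mathbf{m} + \mathbf{r}} = \prod_{j=1}^d \iverson{n_j = q_j m_j + r_j} . \]
Moreover, by definition in Table~\ref{tbl:List_of_Mathematical_Symbols}, $\mathcal{R}(\mathbf{m}) = \prod_{j=1}^d \mathcal{R}(m_j)$ and $\NN_0^d = \prod_{j=1}^d \NN_0$.

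For identity \eqref{eq:Sum_q_r_Iverson_bracket_identity_multi-dimensional}, I would fix $\mathbf{n} \in \NN_0^d$ and rewrite the double sum as an iterated sum over the components $j \in \{1,\dots,d\}$. Substituting the factorization and interchanging the (product of finitely or only finitely-many-nonzero) sums yields
\[ \sum_{\mathbf{q} \in \NN_0^d} \sum_{\mathbf{r} \in \mathcal{R}(\mathbf{m})} \prod_{j=1}^d \iverson{n_j = q_j m_j + r_j} = \prod_{j=1}^d \left( \sum_{q_j = 0}^{\infty} \sum_{r_j = 0}^{m_j - 1} \iverson{n_j = q_j m_j + r_j} \right) = \prod_{j=1}^d 1 = 1, \]
where the inner equality invokes \eqref{eq:Sum_q_r_Iverson_bracket_identity} for each $j$. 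The rearrangement of the double sum into a product of double sums is legitimate because for each fixed $n_j$ there is exactly one $(q_j, r_j)$ with a nonzero bracket, so the whole multiple sum has only one nonzero term and absolute convergence is automatic.

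For identity \eqref{eq:Sum_n_Iverson_bracket_identity_multi-dimensional}, I would proceed analogously: fix $\mathbf{q} \in \NN_0^d$ and $\mathbf{r} \in \mathcal{R}(\mathbf{m})$, factorize the bracket, and decompose $\sum_{\mathbf{n} \in \NN_0^d}$ into $\prod_{j=1}^d \sum_{n_j \in \NN_0}$, then apply \eqref{eq:Sum_n_Iverson_bracket_identity} component-wise. Alternatively, one may directly note that there is a unique $\mathbf{n} \in \NN_0^d$ (namely $\mathbf{n} = \mathbf{q} \odot \mathbf{m} + \mathbf{r}$) for which the bracket equals $1$, and all other brackets vanish.

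There is no substantive obstacle here: the argument is a straightforward tensorization of Lemma~\ref{lem:Iverson-bracket_sum_identities}. The only mild technical point is to confirm that the interchange of the multiple (possibly infinite) sum with the product over coordinates is valid, which follows at once from the fact that each of these sums contains a single nonzero term.
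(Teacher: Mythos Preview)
Your proposal is correct and essentially identical to the paper's own proof: both factorize the vector Iverson bracket as $\prod_{j=1}^d \iverson{n_j = q_j m_j + r_j}$, split the multi-index sum into a product of one-dimensional sums, and then apply Lemma~\ref{lem:Iverson-bracket_sum_identities} coordinate-wise. The only difference is cosmetic---the paper is slightly terser and does not explicitly comment on the legitimacy of the sum-product interchange, which you correctly note is trivial here.
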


\begin{proof}
By a straightforward generalization of Lemma~\ref{lem:Iverson-bracket_sum_identities}. Specifically, is holds that $\iverson{\mathbf{n} = \mathbf{q} \odot \mathbf{m} + \mathbf{r}} = \iverson{n_j = q_j m_j + r_j, \; \forall j \in \{1,\dots,d\}} = \prod_{j=1}^d \iverson{n_j = q_j m_j + r_j}$. Therefore,
\begin{equation*}
\sum_{\mathbf{q} \in \NN_0^d} \sum_{\mathbf{r} \in {\mathcal{R}(\mathbf{m})}} \iverson{\mathbf{n} = \mathbf{q} \odot \mathbf{m} + \mathbf{r}} =  \prod_{j=1}^d \left( \sum_{q \in \NN_0} \sum_{r \in {\mathcal{R}(m_j)}} \iverson{n_j = q m_j + r} \right) \stackrel{\mathclap{\eqref{eq:Sum_q_r_Iverson_bracket_identity}}}{=} \prod_{j=1}^d {1} = 1 ,
\end{equation*}
for all $\mathbf{n} \in \NN_0^d$, and 
\begin{equation*}
\sum_{\mathbf{n} \in \NN_0^d} \iverson{\mathbf{n} = \mathbf{q} \odot \mathbf{m} + \mathbf{r}} = \prod_{j=1}^d \left( \sum_{n \in \NN_0} \iverson{n = q_j m_j + r_j}  \right) \stackrel{\mathclap{\eqref{eq:Sum_n_Iverson_bracket_identity}}}{=} \prod_{j=1}^d {1} = 1    ,
\end{equation*} 
for all $\mathbf{q} \in \NN_0^d$ and $\mathbf{r} \in {\mathcal{R}(\mathbf{m})}$. 
\end{proof}

\subsection{Integral Representations of the Kronecker Deltas}

\begin{lemma}[Integral representation of the Kronecker delta] \label{lem:Integral_representation_Kronecker_delta}
Let $n,k \in \ZZ$ and $\rho \in \RR_{+}$. Then, we have
\begin{equation} \label{eq:Kronecker_delta_integral_representation}
\delta_{n,k} \coloneqq \iverson{n = k} = \frac{1}{2\pi\iu} \cintpos_{\mathcal{C}(\rho)} {u^{n-k-1}} \rd u .
\end{equation}
\end{lemma}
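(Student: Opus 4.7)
The plan is to give the classical proof by direct parametrization of the circle $\mathcal{C}(\rho)$, which handles all sign cases of the exponent $n-k-1$ uniformly. First I would write $u(\theta) = \rho e^{\iu \theta}$ for $\theta \in [0,2\pi]$, so that $\rd u = \iu \rho e^{\iu \theta} \rd \theta$, and substitute into the integral to obtain
\begin{equation*}
\frac{1}{2\pi\iu} \cintpos_{\mathcal{C}(\rho)} u^{n-k-1} \rd u = \frac{\rho^{n-k}}{2\pi} \int_0^{2\pi} e^{\iu (n-k) \theta} \rd \theta.
\end{equation*}
Since $n-k \in \ZZ$, the remaining real integral is a standard orthogonality integral on the circle.

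Next I would split into two cases. If $n = k$, the integrand on the right is identically $1$, so the integral equals $2\pi$ and the whole expression reduces to $\rho^0 = 1$. If $n \neq k$, set $p \coloneqq n-k \in \ZZ \setminus \{0\}$ and compute
\begin{equation*}
\int_0^{2\pi} e^{\iu p \theta} \rd \theta = \left. \frac{e^{\iu p \theta}}{\iu p} \right\rvert_0^{2\pi} = \frac{e^{\iu 2\pi p} - 1}{\iu p} = 0,
\end{equation*}
since $e^{\iu 2\pi p}=1$ for every nonzero integer $p$. Combining both cases yields $\delta_{n,k}$, and the independence of the value from $\rho > 0$ drops out of the computation for free.

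As a remark I would note the equivalent residue-theoretic viewpoint: $u^{n-k-1}$ is a single Laurent monomial on $\CC \setminus \{0\}$, holomorphic everywhere when $n-k-1 \geq 0$ and with its only singularity at $0$ when $n-k-1 \leq -1$; in either case the residue at $0$ is $\iverson{n-k-1 = -1} = \delta_{n,k}$, and the Cauchy residue theorem gives the same conclusion. There is no real obstacle in this proof — it is a one-line computation — so the only thing to be careful about is to treat the exponent as an arbitrary integer (positive, zero, or negative) so that the identity applies in the generality needed by the subsequent sections.
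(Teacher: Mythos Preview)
Your proof is correct and takes essentially the same approach as the paper: parametrize $\mathcal{C}(\rho)$ by $u = \rho e^{\iu\theta}$, reduce to $\frac{\rho^{n-k}}{2\pi}\int_0^{2\pi} e^{\iu(n-k)\theta}\rd\theta$, and evaluate by cases. The paper's version is slightly terser (it does not write out the antiderivative in the $n\neq k$ case), and your added residue remark is extra but harmless.
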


\begin{proof}
By applying the variable transformation $u \mapsto \rho e^{\iu\theta}$, $0 \leq \theta < 2\pi$, we obtain 
\begin{equation*}
\frac{1}{2\pi\iu} \cintpos_{\mathcal{C}(\rho)} {u^{n-k-1}} \rd u = \frac{\rho^{n-k}}{2\pi} \int_0^{2\pi} e^{\iu(n-k)\theta}  \rd \theta = 
\begin{cases}
1 , & \mathrm{if}\ n = k  , \\
0 , & \mathrm{if}\ n \neq k  . 
\end{cases} 
= \iverson{n = k} .
\end{equation*}
\end{proof}

\begin{lemma}[Integral representation of the multi-dimensional Kronecker delta] \label{lem:Integral_representation_multi-dimensional_Kronecker_delta}
Let $\mathbf{n},\mathbf{k} \in \ZZ^d$ and $\boldsymbol{\rho} \in \RR_{+}^d$. Then, we have 
\begin{equation} \label{eq:Kronecker_delta_integral_representation_multi-dimensional}
\delta_{\mathbf{n},\mathbf{k}} \coloneqq \iverson{\mathbf{n} = \mathbf{k}} = \frac{1}{(2\pi\iu)^d} \cintpos_{\mathcal{C}(\boldsymbol{\rho})} {\mathbf{u}^{\mathbf{n} - \mathbf{k} - \mathbf{1}} \rd \mathbf{u}} .
\end{equation}
\end{lemma}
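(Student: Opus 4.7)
The plan is to reduce the multi-dimensional identity to $d$ copies of the one-dimensional identity (Lemma~\ref{lem:Integral_representation_Kronecker_delta}) by exploiting the product structure of the contour, the integrand, and the Iverson bracket.

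First, I would unpack the notation: the poly-circle factors as $\mathcal{C}(\boldsymbol{\rho}) = \prod_{j=1}^d \mathcal{C}(\rho_j)$, the monomial factors as $\mathbf{u}^{\mathbf{n}-\mathbf{k}-\mathbf{1}} = \prod_{j=1}^d u_j^{n_j - k_j - 1}$, and the differential factors as $\rd \mathbf{u} = \prod_{j=1}^d \rd u_j$. By the paper's convention $\cintpos f(\mathbf{u}) \rd \mathbf{u} \coloneqq \cintpos \cdots \cintpos f(\mathbf{u}) \rd u_1 \cdots \rd u_d$, the multiple contour integral is \emph{defined} as an iterated integral; since the integrand is a product of functions each depending on a single variable, the iterated integral distributes cleanly over the factors, yielding
\begin{equation*}
\frac{1}{(2\pi\iu)^d} \cintpos_{\mathcal{C}(\boldsymbol{\rho})} \mathbf{u}^{\mathbf{n} - \mathbf{k} - \mathbf{1}} \rd \mathbf{u} = \prod_{j=1}^d \left( \frac{1}{2\pi\iu} \cintpos_{\mathcal{C}(\rho_j)} u_j^{n_j - k_j - 1} \rd u_j \right).
\end{equation*}

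Second, applying Lemma~\ref{lem:Integral_representation_Kronecker_delta} to each of the $d$ one-dimensional factors gives $\prod_{j=1}^d \iverson{n_j = k_j}$. Since $\mathbf{n} = \mathbf{k}$ if and only if $n_j = k_j$ for every $j \in \{1,\dots,d\}$, we have the multiplicative identity $\iverson{\mathbf{n} = \mathbf{k}} = \prod_{j=1}^d \iverson{n_j = k_j}$ (which is precisely the identity already employed in the proof of Lemma~\ref{lem:Multi-dimensional_Iverson-bracket_sum_identities}). Combining these two observations closes the argument.

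No real obstacle arises: the claim is a direct corollary of the one-dimensional case together with the multiplicative structure of the contour, the integrand, and the Kronecker delta. The only minor point requiring care is confirming that the iterated integration splits over the factors, and this is immediate from the paper's own definition of the multi-dimensional contour integral, together with the elementary fact that $\int_\gamma g(u)h(v)\rd u = h(v)\int_\gamma g(u)\rd u$ whenever $v$ is treated as a parameter.
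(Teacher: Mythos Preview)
Your proposal is correct and follows essentially the same approach as the paper's own proof: both factor the multi-dimensional integral and the Iverson bracket into products over $j=1,\dots,d$ and invoke Lemma~\ref{lem:Integral_representation_Kronecker_delta} on each factor. The only cosmetic difference is direction---the paper starts from $\iverson{\mathbf{n}=\mathbf{k}}$ and builds up to the integral, while you start from the integral and reduce to the Iverson bracket.
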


\begin{proof}
By a direct generalization of Lemma~\ref{lem:Integral_representation_Kronecker_delta}. In particular, it holds that  
\begin{align*}
\iverson{\mathbf{n} = \mathbf{k}} & = \iverson{n_j = k_j, \; \forall j \in \{1,\dots,d\}} = \prod_{j=1}^d \iverson{n_j = k_j} \\
& \stackrel{\mathclap{\eqref{eq:Kronecker_delta_integral_representation}}}{=}  \prod_{j=1}^d \left( \frac{1}{2\pi\iu} \cintpos_{\mathcal{C}(\rho_j)} {u^{n_j-k_j-1}} \rd u \right)  \\
& = \frac{1}{(2\pi\iu)^d} \cintpos_{u_1 \in \mathcal{C}(\rho_1)} \cdots \cintpos_{u_d \in \mathcal{C}(\rho_d)} {\left( \prod _{j=1}^d {u_j^{n_j-k_j-1}} \right)  \left(  \prod_{j=1}^d \mathrm{d} u_j \right)}    \\
& = \frac{1}{(2\pi\iu)^d} \cintpos_{\mathcal{C}(\boldsymbol{\rho})} {\mathbf{u}^{\mathbf{n} - \mathbf{k} - \mathbf{1}} \rd \mathbf{u}}    .
\end{align*} 
\end{proof}

\subsection{Partial Fraction Decomposition}

Another useful result to facilitate the calculation of residues is the partial fraction decomposition of a rational function. It can simplify integration by integrating each term of the expression separately.

\begin{lemma} [Partial fraction decomposition of a rational function, based on {\cite[Theorems~4.2--4.5]{Lang_2005}}] \label{lem:Partial_fraction_decomposition}
Let $f,g \colon \CC \to \CC$ be relatively prime (or, coprime) polynomials, i.e., without common roots, with $\operatorname{deg}(f) < \operatorname{deg}(g)$. Assume also that $g(u) = \prod_{j=1}^k (u - u_j)^{n_j}$ for some $k \in \NN$, where $\{u_j\}$ are distinct complex numbers, and $\{n_j\}$ are (strictly) positive integers. Then, there exist unique complex numbers $\{c_{j,l}\}$ such that 
\begin{equation*}
R(u) \coloneqq \frac{f(u)}{g(u)} = \sum_{j=1}^k \sum_{l=1}^{n_j} \frac{c_{j,l}}{(u-u_j)^l}  ,
\end{equation*} 
for all $u \in \CC \setminus \{u_1,\dots,u_k\}$.
\end{lemma}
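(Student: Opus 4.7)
My plan is to prove existence and uniqueness separately, with the existence part handled by induction on $k$ (the number of distinct roots) using Bezout's identity in $\CC[u]$, and the uniqueness part handled by a pole-order argument (or equivalently a dimension count).

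For existence, I would proceed by induction on $k \in \NN$. In the base case $k=1$, we have $g(u) = (u - u_1)^{n_1}$ and $\deg(f) < n_1$, so Taylor-expanding $f$ around $u_1$ gives $f(u) = \sum_{j=0}^{n_1 - 1} a_j (u - u_1)^j$; dividing by $(u - u_1)^{n_1}$ yields the desired decomposition with $c_{1, n_1 - j} = a_j$. For the inductive step, assume the result holds for $k-1$. Write $g(u) = g_1(u) g_2(u)$ where $g_1(u) \coloneqq (u - u_k)^{n_k}$ and $g_2(u) \coloneqq \prod_{j=1}^{k-1}(u - u_j)^{n_j}$. Since $\{u_j\}$ are distinct, $g_1$ and $g_2$ are coprime, so by Bezout's identity there exist polynomials $p, q \in \CC[u]$ with $p(u) g_1(u) + q(u) g_2(u) = 1$. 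Multiplying by $f/g$ gives $f/g = (fp)/g_2 + (fq)/g_1$. After dividing each numerator by its denominator to isolate the proper part (and absorbing any polynomial remainder, which must in fact vanish since $\deg(f) < \deg(g)$ forces the overall polynomial part of $f/g$ to be zero), I reduce to two proper rational functions to which the inductive hypothesis and the base case apply; combining the resulting decompositions yields the claim.

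For uniqueness, suppose we have two decompositions of $f(u)/g(u)$. Subtracting gives an identity of the form $\sum_{j=1}^{k} \sum_{l=1}^{n_j} d_{j,l}/(u - u_j)^l \equiv 0$ as a rational function on $\CC \setminus \{u_1, \dots, u_k\}$. For each fixed $j_0$, multiplying through by $(u - u_{j_0})^{n_{j_0}}$ makes every term with $j \neq j_0$ holomorphic at $u_{j_0}$, while the terms with $j = j_0$ become $\sum_{l=1}^{n_{j_0}} d_{j_0, l} (u - u_{j_0})^{n_{j_0} - l}$. Letting $u \to u_{j_0}$ forces $d_{j_0, n_{j_0}} = 0$; differentiating successively with respect to $u$ and taking the same limit forces $d_{j_0, n_{j_0} - 1} = 0$, \ldots, $d_{j_0, 1} = 0$. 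Running this over all $j_0 \in \{1, \dots, k\}$ gives $d_{j,l} = 0$ for all indices, hence uniqueness.

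The main obstacle is not really technical, but bookkeeping: making sure the inductive step does not silently introduce a polynomial part. The cleanest way to handle this is to note that the linear map from $\CC^{\sum n_j}$ (the coefficient tuples $(c_{j,l})$) to the space of polynomials of degree less than $\sum n_j$ given by clearing denominators, i.e.\ $(c_{j,l}) \mapsto \sum_{j,l} c_{j,l} \prod_{i \neq j}(u - u_i)^{n_i} (u - u_j)^{n_j - l}$, has matching source and target dimensions, and the uniqueness argument above shows injectivity; surjectivity then follows for free, yielding existence as well. Either route gives the result, with Lang's cited theorems \cite{Lang_2005} providing the standard reference.
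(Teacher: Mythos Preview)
Your proof is correct and complete. However, the paper does not actually prove this lemma: it is stated as a standard result ``based on \cite[Theorems~4.2--4.5]{Lang_2005}'' and is simply invoked without proof. Your Bezout-induction argument for existence and the pole-order argument for uniqueness are both standard and essentially what one finds in Lang's treatment; the alternative dimension-count you sketch at the end (injectivity plus equal dimensions implies surjectivity) is also a clean way to package both halves at once. There is nothing to correct here --- you have supplied a proof where the paper chose to cite one.
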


\subsection{Cauchy's Integral Formulas} 

Finally, residues of a meromorphic function with general-order \emph{independent} poles, i.e., isolated singularities, can be computed using the Cauchy integral formulas.

\begin{lemma}[Cauchy's integral formulas on poly-disks for multiple power series] \label{lem:Cauchy_integral_formulas}
Let $d \in \NN$ and $\boldsymbol{\rho} \in \RR_{+}^d$. Suppose that the multiple power series $\sum_{\mathbf{n} \in \NN_0^d} a_{\mathbf{n}} \mathbf{z}^{\mathbf{n}}$, with $a_{\mathbf{n}} \in \CC$ and $\mathbf{z} \in \CC^d$, is unconditionally uniformly convergent on the poly-disk $\overline{\mathcal{D}} (\boldsymbol{\rho}) = \mathcal{D} (\boldsymbol{\rho}) \cup \mathcal{C} (\boldsymbol{\rho})$ to a function $f \colon \overline{\mathcal{D}} (\boldsymbol{\rho}) \to \CC$; we can write  $f(\mathbf{z}) = \sum_{\mathbf{n} \in \NN_0^d} a_{\mathbf{n}} \mathbf{z}^{\mathbf{n}}$. Then, $f$ is continuous on its domain $\overline{\mathcal{D}} (\boldsymbol{\rho})$, holomorphic\footnote{We say that a function $f \colon \mathcal{S} \to \CC$ is holomorphic on an open set $\mathcal{S} \subseteq \CC^d$ (for some $d \in \NN$) if, and only if, it is holomorphic in each variable separately, i.e., for each $j \in \{1,\dots,d\}$ and each $\mathbf{z} \in \mathcal{S}$ the function $g_{j,\mathbf{z}}(w) \coloneqq f(z_1,\dots,z_{j-1},w,z_{j+1},\dots,z_d)$ is holomorphic, in the classical one-variable sense, on the set $\mathcal{W}_{j,\mathbf{z}} \coloneqq \{w \in \CC : (z_1,\dots,z_{j-1},w,z_{j+1},\dots,z_d) \in \mathcal{S}\}$. In particular, if $\mathcal{S} = \mathcal{D} (\boldsymbol{\rho})$ for some $\boldsymbol{\rho} \in \RR_{+}^d$, then every set $\mathcal{W}_{j,\mathbf{z}} = \mathcal{W}_j = \mathcal{D}(\rho_j)$.} on the open poly-disk $\mathcal{D} (\boldsymbol{\rho})$, and the complex coefficients $\{a_{\mathbf{n}}\}_{\mathbf{n} \in \NN_0^d}$ are given by  
\begin{equation} \label{eq:Coefficients_a_n}
a_{\mathbf{n}}  =  \frac{f^{(\mathbf{n})}(\mathbf{0})}{\mathbf{n}!}  =  \frac{1}{(2\pi\iu)^d} \cintpos_{\mathcal{C}(\boldsymbol{\rho})} \frac{f(\mathbf{z})}{\mathbf{z}^{\mathbf{n}+\mathbf{1}}} \rd \mathbf{z}  .
\end{equation}
In addition, for every $\mathbf{n} \in \NN_0^d$, $f^{(\mathbf{n})}$ is holomorphic on $\mathcal{D} (\boldsymbol{\rho})$ with 
\begin{equation} \label{eq:Residue_with_independent_poles}
\frac{f^{(\mathbf{n})}(\boldsymbol{\xi})}{\mathbf{n}!}  =  \frac{1}{(2\pi\iu)^d} \cintpos_{\mathcal{C}(\boldsymbol{\rho})}  \frac{f(\mathbf{z})}{(\mathbf{z}-\boldsymbol{\xi})^{\mathbf{n}+\mathbf{1}}} \rd \mathbf{z}  ,
\end{equation}
for all $\boldsymbol{\xi} \in \mathcal{D}(\boldsymbol{\rho})$, and the multiple series of derivatives
\begin{equation} \label{eq:Uniform_convergence_of_series_of_derivatives}
\sum_{\boldsymbol{\nu} \in \NN_0^d} a_{\boldsymbol{\nu}} (\mathbf{z}^{\boldsymbol{\nu}})^{(\mathbf{n})} \; \stackrel{\mathclap{\mathrm{unif.}}}{=} \; f^{(\mathbf{n})}(\mathbf{z})  \quad \text{on}\ \mathcal{D}(\boldsymbol{\rho})  .
\end{equation}
\end{lemma}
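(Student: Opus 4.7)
The continuity of $f$ on $\overline{\mathcal{D}}(\boldsymbol{\rho})$ is immediate: for any bijection $\boldsymbol{\sigma}\colon\NN_0\to\NN_0^d$ the partial sums $\sum_{j=0}^{N} a_{\boldsymbol{\sigma}(j)}\mathbf{z}^{\boldsymbol{\sigma}(j)}$ are polynomials, and unconditional uniform convergence to $f$ transfers continuity to the limit. To prove the integral representation, I substitute the series for $f$ into the contour integral, swap summation with integration (licit by unconditional uniform convergence on the compact poly-circle $\mathcal{C}(\boldsymbol{\rho})$), and invoke Lemma~\ref{lem:Integral_representation_multi-dimensional_Kronecker_delta}:
\begin{equation*}
\frac{1}{(2\pi\iu)^d}\cintpos_{\mathcal{C}(\boldsymbol{\rho})}\frac{f(\mathbf{z})}{\mathbf{z}^{\mathbf{n}+\mathbf{1}}}\rd\mathbf{z} \;=\; \sum_{\boldsymbol{\nu}\in\NN_0^d} a_{\boldsymbol{\nu}}\,\frac{1}{(2\pi\iu)^d}\cintpos_{\mathcal{C}(\boldsymbol{\rho})}\mathbf{z}^{\boldsymbol{\nu}-\mathbf{n}-\mathbf{1}}\rd\mathbf{z} \;=\; a_{\mathbf{n}}.
\end{equation*}
As a by-product this yields the Cauchy estimate $\abs{a_{\boldsymbol{\nu}}}\leq M\,\boldsymbol{\rho}^{-\boldsymbol{\nu}}$, where $M\coloneqq\max_{\mathcal{C}(\boldsymbol{\rho})}\abs{f}<\infty$ by continuity---the key tool for the next step.

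\textbf{Holomorphy and uniform convergence of the differentiated series.} For separate holomorphy of $f$ in each variable $z_j$, I fix the remaining coordinates and regroup the (absolutely convergent) multiple series by powers of $z_j$, obtaining a single-variable power series in $z_j$ with radius of convergence at least $\rho_j$, hence holomorphic on $\mathcal{D}(\rho_j)$. For the termwise-differentiated series, I fix any poly-radius $\boldsymbol{\rho}'$ with $\rho'_j<\rho_j$ for all $j$ and bound each term on $\overline{\mathcal{D}}(\boldsymbol{\rho}')$ via the Cauchy estimate by
\begin{equation*}
\abs{a_{\boldsymbol{\nu}}\,(\mathbf{z}^{\boldsymbol{\nu}})^{(\mathbf{n})}} \;\leq\; M\prod_{j=1}^{d}\frac{\nu_j!}{(\nu_j-n_j)!}\,\rho_j^{-\nu_j}\,(\rho'_j)^{\nu_j-n_j},
\end{equation*}
with the convention that terms vanish when any $\nu_j<n_j$. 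The substitution $\nu_j=n_j+k_j$ makes the majorant sum factorise into $d$ copies of $\sum_{k\geq 0}\binom{k+n_j}{n_j}(\rho'_j/\rho_j)^{k}=(1-\rho'_j/\rho_j)^{-(n_j+1)}$, each finite; Proposition~\ref{prop:Generalized_Weierstrass_M-test} then yields unconditional uniform convergence of the differentiated series on $\overline{\mathcal{D}}(\boldsymbol{\rho}')$. To identify its uniform limit with the genuine partial derivative $f^{(\mathbf{n})}$, I would induct on $\lVert\mathbf{n}\rVert$, iterating the classical single-variable fact that uniform convergence of a power series together with uniform convergence of its formal term-by-term derivative implies genuine differentiability. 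Evaluating the resulting identity at $\mathbf{z}=\mathbf{0}$ retains only the $\boldsymbol{\nu}=\mathbf{n}$ contribution and gives $a_{\mathbf{n}}=f^{(\mathbf{n})}(\mathbf{0})/\mathbf{n}!$.

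\textbf{The shifted Cauchy formula, and the main obstacle.} For the representation at a generic $\boldsymbol{\xi}\in\mathcal{D}(\boldsymbol{\rho})$, I expand the kernel on $\mathcal{C}(\boldsymbol{\rho})$ as
\begin{equation*}
\frac{1}{(\mathbf{z}-\boldsymbol{\xi})^{\mathbf{n}+\mathbf{1}}} \;=\; \sum_{\mathbf{k}\in\NN_0^d}\prod_{j=1}^{d}\binom{n_j+k_j}{n_j}\,\frac{\xi_j^{k_j}}{z_j^{\,n_j+k_j+1}},
\end{equation*}
which converges unconditionally uniformly on $\mathcal{C}(\boldsymbol{\rho})$ because $\abs{\xi_j}/\rho_j<1$ for all $j$. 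Multiplying by $f(\mathbf{z})$, interchanging both sums with the $d$-dimensional contour integral (each swap secured by a further application of Proposition~\ref{prop:Generalized_Weierstrass_M-test} to the resulting double-indexed series on the compact poly-circle), and applying Lemma~\ref{lem:Integral_representation_multi-dimensional_Kronecker_delta} isolates $\sum_{\mathbf{k}}\prod_{j}\binom{n_j+k_j}{n_j}\,a_{\mathbf{n}+\mathbf{k}}\,\boldsymbol{\xi}^{\mathbf{k}}$, which by the locally uniformly convergent differentiated Taylor expansion of the previous step equals $f^{(\mathbf{n})}(\boldsymbol{\xi})/\mathbf{n}!$. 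Holomorphy of $f^{(\mathbf{n})}$ on $\mathcal{D}(\boldsymbol{\rho})$ then follows either by differentiating under the integral sign in $\boldsymbol{\xi}$, or directly from the fact that the differentiated power series is itself uniformly convergent on every smaller poly-disk. \emph{The principal obstacle} is the bookkeeping of multiple nested interchanges of summation with multidimensional contour integration, together with the inductive identification of the termwise-differentiated series with the true mixed partial derivative; this identification must be carried out one variable at a time, in order to respect the paper's variable-wise definition of holomorphy and to propagate the uniform bounds at each inductive step.
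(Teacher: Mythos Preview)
Your proof is correct and considerably more self-contained than the paper's. The paper's own proof is essentially a sequence of citations: continuity via Rudin's uniform-limit theorem, then holomorphy, the coefficient formula $a_{\mathbf{n}}=f^{(\mathbf{n})}(\mathbf{0})/\mathbf{n}!$, and the uniform convergence of the differentiated series all by invoking \cite[Theorem~1.17]{Range_1986}, and finally the shifted Cauchy formula \eqref{eq:Residue_with_independent_poles} by appealing to \cite[Theorem~4.1, Corollary~4.2]{Aizenberg-Yuzhakov_1983} and \cite[Theorem~1.3, Corollary~1.5]{Range_1986} applied to a function continuous on $\overline{\mathcal{D}}(\boldsymbol{\rho})$ and holomorphic on $\mathcal{D}(\boldsymbol{\rho})$. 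You instead build everything from scratch: you extract the coefficient formula directly by term-by-term integration against the Kronecker-delta kernel of Lemma~\ref{lem:Integral_representation_multi-dimensional_Kronecker_delta}, derive a Cauchy estimate, and then use Proposition~\ref{prop:Generalized_Weierstrass_M-test} plus an explicit negative-binomial majorant to control the differentiated series; the shifted formula you obtain by expanding the kernel $1/(\mathbf{z}-\boldsymbol{\xi})^{\mathbf{n}+\mathbf{1}}$ as a geometric-type multi-series and summing. What your route buys is independence from the several-complex-variables literature and a transparent audit trail for every interchange of sum and integral; what the paper's route buys is brevity and the reassurance of quoting standard texts. One small remark: your argument yields unconditional uniform convergence of the differentiated series on every strictly smaller closed poly-disk $\overline{\mathcal{D}}(\boldsymbol{\rho}')$, i.e.\ local uniform convergence on $\mathcal{D}(\boldsymbol{\rho})$, which is exactly what the applications in the paper require and is the natural reading of \eqref{eq:Uniform_convergence_of_series_of_derivatives}; genuine uniform convergence on all of $\mathcal{D}(\boldsymbol{\rho})$ can fail (e.g.\ $\sum_{n\geq1} z^n/n^2$ on $|z|\leq1$), so do not claim more than you prove.
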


\begin{proof}
According to \cite[Theorem~7.12]{Rudin_1976}, if $\{g_n\}$ is a sequence of continuous functions on a set $\mathcal{S}$ and this sequence converges uniformly on $\mathcal{S}$ to a function $g$, then $g$ is continuous on $\mathcal{S}$. Since all (finite) partial sums of the multiple series $\sum_{\mathbf{n} \in \NN_0^d} a_{\mathbf{n}} \mathbf{z}^{\mathbf{n}}$ are continuous on $\overline{\mathcal{D}} (\boldsymbol{\rho})$ and uniformly convergent to $f$, given any rearrangement of the series' terms, we conclude that $f$ is continuous on $\overline{\mathcal{D}} (\boldsymbol{\rho})$. 

Due to Proposition~\ref{prop:Unconditional_uniform_convergence_implies_unconditional_pointwise_convergence}, we have that the series $\sum_{\mathbf{n} \in \NN_0^d} a_{\mathbf{n}} \mathbf{z}^{\mathbf{n}}$ is unconditionally pointwise convergent on $\overline{\mathcal{D}} (\boldsymbol{\rho})$ (and therefore on its subset $\mathcal{D} (\boldsymbol{\rho})$) to $f$. By \cite[Theorem~1.17]{Range_1986}, it follows that $f$ is holomorphic on the open poly-disk $\mathcal{D} (\boldsymbol{\rho})$, Eq.~\eqref{eq:Uniform_convergence_of_series_of_derivatives} is true, and $a_{\mathbf{n}} = \frac{f^{(\mathbf{n})}(\mathbf{0})}{\mathbf{n}!}$ for all $\mathbf{n} \in \NN_0^d$. 

Moreover, for every $\mathbf{n} \in \NN_0^d$, $f^{(\mathbf{n})}$ is holomorphic on $\mathcal{D} (\boldsymbol{\rho})$ and Eq.~\eqref{eq:Residue_with_independent_poles} holds for all $\boldsymbol{\xi} \in \mathcal{D}(\boldsymbol{\rho})$, because $f$ is continuous on $\overline{\mathcal{D}} (\boldsymbol{\rho})$ and holomorphic on $\mathcal{D} (\boldsymbol{\rho})$; see \cite[Theorem~4.1, Corollary~4.2]{Aizenberg-Yuzhakov_1983} and \cite[Theorem~1.3, Corollary~1.5]{Range_1986}. Finally, the second equality in \eqref{eq:Coefficients_a_n} can be obtained from \eqref{eq:Residue_with_independent_poles} by setting $\boldsymbol{\xi} = \mathbf{0}$. 
\end{proof}

\section{Proof of Proposition~\ref{prop:Unconditional_uniform_and_pointwise_convergence_of_GFs}} \label{sec:Proof_Proposition_1}

\begin{lemma}[Rough upper bound on $t_k(n)$] \label{lem:Rough_upper_bound_on_t_k}
It holds that  
\begin{equation*}
t_k(n) \leq \begin{cases}
\maxnorm{\mathbf{a}}^k (n + \chi) - \chi , & \mathrm{if}\ \maxnorm{\mathbf{a}} \neq 1  , \\
n + \maxnorm{\mathbf{b}} k , & \mathrm{if}\ \maxnorm{\mathbf{a}} = 1  ,
\end{cases}
\end{equation*}
for all $(k,n) \in \NN_0^2$, where $\chi \coloneqq \frac{\maxnorm{\mathbf{b}}}{\maxnorm{\mathbf{a}} - 1}$.
\end{lemma}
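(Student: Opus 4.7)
The plan is to prove this by induction on $k$, starting from the one-step bound $t(n) \leq \maxnorm{\mathbf{a}} n + \maxnorm{\mathbf{b}}$ for all $n \in \NN_0$. This follows immediately from Definition~\ref{def:Generalized_Collatz_dynamics}: if $n \equiv r \pmod{m}$ then $t(n) = a_r n + b_r \leq \abs{a_r} n + \abs{b_r} \leq \maxnorm{\mathbf{a}} n + \maxnorm{\mathbf{b}}$, where I use that $a_r, b_r$ can be taken as nonnegative in the relevant estimate (or, more cautiously, that $t(n) \in \NN_0$ so $t(n) = \abs{t(n)} \leq \maxnorm{\mathbf{a}} n + \maxnorm{\mathbf{b}}$). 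Applying this to $t_{k-1}(n) \in \NN_0$ in place of $n$ and using the recursion \eqref{eq:Generalized_Collatz_dynamics} yields the scalar inequality
\begin{equation*}
t_k(n) \leq \maxnorm{\mathbf{a}} \, t_{k-1}(n) + \maxnorm{\mathbf{b}}, \qquad \forall k \in \NN, \ \forall n \in \NN_0 .
\end{equation*}
with initial value $t_0(n) = n$.

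From here the proof splits into the two stated cases based on whether $\maxnorm{\mathbf{a}} = 1$. For notational convenience set $\alpha \coloneqq \maxnorm{\mathbf{a}}$ and $\beta \coloneqq \maxnorm{\mathbf{b}}$. If $\alpha \neq 1$, I would linearise by adding $\chi = \beta/(\alpha-1)$ to both sides, producing $t_k(n) + \chi \leq \alpha (t_{k-1}(n) + \chi)$, and then iterate to obtain $t_k(n) + \chi \leq \alpha^k (n + \chi)$, which rearranges to the first case of the claim. If $\alpha = 1$, the recurrence reduces to $t_k(n) \leq t_{k-1}(n) + \beta$, and a straightforward induction gives $t_k(n) \leq n + \beta k$, matching the second case.

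There is no real obstacle; the only minor subtlety is justifying the scalar one-step bound cleanly without assuming the signs of $a_r, b_r$. Since condition~\ref{cond:1.a} forces $a_r = \lambda_r / m > 0$ so that $a_r = \abs{a_r}$, and $t_{k-1}(n) \geq 0$, one has $a_r \, t_{k-1}(n) \leq \maxnorm{\mathbf{a}} \, t_{k-1}(n)$ directly; for the additive term, one uses $b_r \leq \abs{b_r} \leq \maxnorm{\mathbf{b}}$. That handles the base estimate uniformly in the residue class, after which the induction and the elementary solution of the linear recurrence complete the proof.
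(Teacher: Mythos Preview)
Your proof is correct and follows essentially the same approach as the paper: both derive the one-step inequality $t_k(n) \leq \maxnorm{\mathbf{a}}\, t_{k-1}(n) + \maxnorm{\mathbf{b}}$ and then solve the resulting linear recurrence by induction. The only cosmetic difference is that the paper first proves $t_k(n) \leq \maxnorm{\mathbf{a}}^k n + \maxnorm{\mathbf{b}}\sum_{j=0}^{k-1}\maxnorm{\mathbf{a}}^j$ and then evaluates the geometric sum, whereas you shift by the fixed point $\chi$ to diagonalise the recursion directly; both are standard and equivalent.
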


\begin{proof}
According to \eqref{eq:Generalized_Collatz_dynamics} and by applying the triangle inequality in \eqref{eq:Generalized_Collatz_map}, we have
\begin{equation} \label{eq:Triangle_inequality_for_t_k}
t_k(n) = \abs{t_k(n)} = \abs{t(t_{k-1}(n))} \leq \maxnorm{\mathbf{a}} t_{k-1}(n) + \maxnorm{\mathbf{b}} ,  
\end{equation}
for all $k,n \in \NN_0$. Next, we proceed by induction on $k$ (given a particular, but arbitrary, $n \in \NN_0$) to prove that 
\begin{equation} \label{eq:Inequality_by_induction_on_k}
t_k(n) \leq \maxnorm{\mathbf{a}}^k n + \maxnorm{\mathbf{b}} \sum_{j=0}^{k-1} {\maxnorm{\mathbf{a}}^j} .
\end{equation}
\emph{Basis step}: For $k=0$, we have $t_0(n) = n \leq \maxnorm{\mathbf{a}}^0 n + \maxnorm{\mathbf{b}} \sum_{j=0}^{-1} {\maxnorm{\mathbf{a}}^j} = n$. \emph{Inductive step}: Suppose that \eqref{eq:Inequality_by_induction_on_k} is true for $k-1$ (\emph{inductive hypothesis}), and then prove that it is also true for $k$, whenever $k \geq 1$. In particular, by combining \eqref{eq:Triangle_inequality_for_t_k} and \eqref{eq:Inequality_by_induction_on_k} with $k \mapsto k-1$, we get
\begin{align*} 
t_k(n) & \leq \maxnorm{\mathbf{a}} \left( \maxnorm{\mathbf{a}}^{k-1} n + \maxnorm{\mathbf{b}} \sum_{j=0}^{k-2} {\maxnorm{\mathbf{a}}^j} \right) + \maxnorm{\mathbf{b}}  \\
& = \maxnorm{\mathbf{a}}^k n + \maxnorm{\mathbf{b}} \left( 1 + \sum_{j=0}^{k-2} {\maxnorm{\mathbf{a}}^{j+1}} \right)      \\
& = \maxnorm{\mathbf{a}}^k n + \maxnorm{\mathbf{b}} \left( 1 + \sum_{j=1}^{k-1} {\maxnorm{\mathbf{a}}^j} \right)  \\
& = \maxnorm{\mathbf{a}}^k n + \maxnorm{\mathbf{b}} \sum_{j=0}^{k-1} {\maxnorm{\mathbf{a}}^j}    . 
\end{align*} 
Hence, we have proved \eqref{eq:Inequality_by_induction_on_k} for all $k \in \NN_0$ (and all $n \in \NN_0$, since $n$ has been chosen arbitrarily). Finally, \eqref{eq:Inequality_by_induction_on_k} leads directly to the desired result, because the geometric series
\begin{equation*}
\sum_{j=0}^{k-1} {\maxnorm{\mathbf{a}}^j} = \begin{cases} 
\frac{\maxnorm{\mathbf{a}}^k - 1}{\maxnorm{\mathbf{a}} - 1} , & \mathrm{if}\ \maxnorm{\mathbf{a}} \neq 1  , \\
k , & \mathrm{if}\ \maxnorm{\mathbf{a}} = 1  .    
\end{cases}
\end{equation*}
\end{proof}

\begin{remark}
The shortened $3n+1$ map in \eqref{eq:Shortened_3n+1_map} can be obtained from the $(m,\mathbf{a},\mathbf{b})$-Collatz map, as a special case, by setting $m=2$, $\mathbf{a}=\left[\frac{1}{2},\frac{3}{2}\right]^{\top}$ and $\mathbf{b}=\left[0,\frac{1}{2}\right]^{\top}$. In addition, from Lemma~\ref{lem:Rough_upper_bound_on_t_k} with $\maxnorm{\mathbf{a}} = \frac{3}{2}$, $\maxnorm{\mathbf{b}} = \frac{1}{2}$ and $\chi = 1$, we obtain $t_k(n) \leq \left( \frac{3}{2} \right)^k (n+1) - 1$, which is consistent with \cite[Eq.~(12)]{Berg-Meinardus_1994}.
\end{remark}

By virtue of Lemma~\ref{lem:Rough_upper_bound_on_t_k} and the triangle inequality, we get   
\begin{align*} 
\abs{t_k(n)} = t_k(n) & \leq \begin{cases} 
\abs{\maxnorm{\mathbf{a}}^k n + \maxnorm{\mathbf{a}}^k \chi - \chi} , & \mathrm{if}\ \maxnorm{\mathbf{a}} \neq 1 , \\
n + \maxnorm{\mathbf{b}} k , & \mathrm{if}\ \maxnorm{\mathbf{a}} = 1  ,
\end{cases}   \\
& \leq \begin{cases} 
\maxnorm{\mathbf{a}}^k n + \maxnorm{\mathbf{a}}^k \abs{\chi} + \abs{\chi} , & \mathrm{if}\ \maxnorm{\mathbf{a}} \neq 1  , \\
n + \maxnorm{\mathbf{b}} k , & \mathrm{if}\ \maxnorm{\mathbf{a}} = 1  ,
\end{cases}   \\
& \leq \begin{cases} 
(\abs{\chi} + 1) (\maxnorm{\mathbf{a}}^k + 1) (n + 1) , & \mathrm{if}\ \maxnorm{\mathbf{a}} \neq 1  , \\
(\maxnorm{\mathbf{b}} k + 1) (n + 1) , & \mathrm{if}\ \maxnorm{\mathbf{a}} = 1  .
\end{cases}  
\end{align*} 
Given a fixed but arbitrary $k \in \NN_0$, let $C \coloneqq (\abs{\chi} + 1) (\maxnorm{\mathbf{a}}^k + 1)$ when $\maxnorm{\mathbf{a}} \neq 1$, or $C \coloneqq \maxnorm{\mathbf{b}} k + 1$ when $\maxnorm{\mathbf{a}} = 1$, and $h(n) \coloneqq n+1$. Since $\ell \coloneqq \left(\limsup_{n \to \infty}  \left(h(n)\right)^{1/n} \right)^{-1} = \left(\lim_{n \to \infty}  e^{\log(n+1)/n} \right)^{-1} = 1 > 0$, Proposition~\ref{prop:Sufficient_condition_for_unconditional_uniform_and_pointwise_convergence_of_multiple_power_series} implies the desired result for the power series $\sum_{n=0}^{\infty} t_k(n) z^n$. 

In addition, for the power series $\sum_{k=0}^{\infty} \sum_{n=0}^{\infty}  t_k(n) z^n w^k$ let us choose $h_1(n) \coloneqq n+1$, so $\ell_1 \coloneqq \left(\limsup_{n \to \infty}  \left(h_1(n)\right)^{1/n} \right)^{-1} = 1 > 0$, and consider two cases.
\begin{itemize}
\item \emph{Case 1} (when $\maxnorm{\mathbf{a}} \neq 1$): Let $C \coloneqq (\abs{\chi} + 1)$ and $h_2(k) \coloneqq \maxnorm{\mathbf{a}}^k + 1$. Then, for $\ell_2 \coloneqq \left(\limsup_{k \to \infty}  \left(h_2(k)\right)^{1/k} \right)^{-1}$ we should distinguish two subcases. 
\begin{itemize}
\item \emph{Subcase 1.a} (when $0 < \maxnorm{\mathbf{a}} < 1$): \newline $\ell_2 = \left(\lim_{k \to \infty}  (\maxnorm{\mathbf{a}}^k + 1)^{1/k} \right)^{-1} = 1 > 0$.

\item \emph{Subcase 1.b} (when $\maxnorm{\mathbf{a}} > 1$): \newline $\ell_2 = \left(\maxnorm{\mathbf{a}} \lim_{k \to \infty}  (1 + \maxnorm{\mathbf{a}}^{-k})^{1/k} \right)^{-1} = \maxnorm{\mathbf{a}}^{-1} > 0$.
\end{itemize}
Observe that in both subcases, it holds that $\ell_2 = \min \{ \maxnorm{\mathbf{a}}^{-1},1 \} = R_w$.

\item \emph{Case 2} (when $\maxnorm{\mathbf{a}} = 1$): Let $C \coloneqq 1$ and $h_2(k) \coloneqq \maxnorm{\mathbf{b}} k + 1$. Then, \linebreak $\ell_2 \coloneqq \left(\limsup_{k \to \infty}  \left(h_2(k)\right)^{1/k} \right)^{-1} = \left(\lim_{k \to \infty}  e^{\log(\maxnorm{\mathbf{b}} k + 1)/k} \right)^{-1} = 1 > 0$. Note that $\ell_2 = \min \{ \maxnorm{\mathbf{a}}^{-1},1 \} = R_w$ as well.
\end{itemize}
By exploiting Proposition~\ref{prop:Sufficient_condition_for_unconditional_uniform_and_pointwise_convergence_of_multiple_power_series} once again we obtain, in both cases, the desired result for the power series $\sum_{k=0}^{\infty} \sum_{n=0}^{\infty}  t_k(n) z^n w^k$.

\section{Proof of Theorem~\ref{thrm:Theorem_1}} \label{sec:Proof_Theorem_1}

\subsection{Proof of Statement~\ref{statement:1.a}}

Given a $z \in \DD_{*} \coloneqq \DD \setminus\{0\}$, we should consider two cases; recall that conditions \ref{cond:1.a} and \ref{cond:1.b} hold.
\begin{itemize}
	\item \emph{Case 1} (when $\mu_r - \lambda_r \geq 0$): The numerator of the left-hand-side expression in \eqref{eq:Partial_fraction_decomposition_wrt_u} equals $1$ (so it has zero degree) and is coprime with the polynomial denominator $u^{\mu_r - \lambda_r + 1} (u^{\lambda_r}-z^m)$ that has degree equal to $\mu_r + 1 \geq 1 > 0$.   

	\item \emph{Case 2} (when $\mu_r - \lambda_r < 0 \iff \mu_r - \lambda_r \leq -1$): The polynomial numerator now becomes $u^{-(\mu_r - \lambda_r + 1)}$ (so its degree equals $\lambda_r - \mu_r - 1 \geq 0$) and is coprime with the polynomial denominator $u^{\lambda_r}-z^m$, because $z \neq 0$, whose degree equals $\lambda_r > \lambda_r - \mu_r - 1$ (since $\mu_r \geq 0 > -1$).
\end{itemize}
In both cases, Lemma~\ref{lem:Partial_fraction_decomposition} is applicable and the statement follows immediately.

\subsection{Proof of Statement~\ref{statement:1.b}}

Fix a $z \in \DD_{*} \coloneqq \DD \setminus\{0\}$ and a $k \in \NN$. Let us choose a radius $\rho \in \RR_{+}$ so that it satisfies the condition: $\left( \abs{z}^{a_r^{-1}} = \right) \abs{z}^{m/{\lambda_r}} < \rho < 1$ for all $r \in \mathcal{R}(m)$, or equivalently $\max\left\{ \abs{z}^{a_r^{-1}} : r \in \mathcal{R}(m) \right\} < \rho < 1$. Since $0 < \abs{z} < 1$ and the function $y(x) = c^x$ is decreasing in $\RR$ for any constant $0 < c < 1$, the condition ultimately becomes $\abs{z}^{\maxnorm{\mathbf{a}}^{-1}} < \rho < 1$. In fact, $\rho$ is appropriately chosen to justify the interchange of summation and contour-integration; see below for more details. 

Based on \eqref{eq:Generating_function_f_k} and using Lemmas~\ref{lem:Iverson-bracket_sum_identities},~\ref{lem:Integral_representation_Kronecker_delta}~and~\ref{lem:Cauchy_integral_formulas}, we obtain

\begin{align*} 
f_k(z) & \stackrel{\mathclap{\eqref{eq:Generalized_Collatz_dynamics}}}{=} \sum_{n=0}^{\infty} t_{k-1}(t(n)) z^n \stackrel{\mathclap{\eqref{eq:Sum_q_r_Iverson_bracket_identity}}}{=} \sum_{n=0}^{\infty} \sum_{q=0}^{\infty} \sum_{r=0}^{m-1} t_{k-1}(t(n)) z^n \iverson{n = q m + r} \\
&  = \sum_{r=0}^{m-1} \sum_{q=0}^{\infty} t_{k-1}( q \lambda_r + \mu_r) z^{q m + r} \sum_{n=0}^{\infty} \iverson{n = q m + r}   \\
&  \stackrel{\mathclap{\eqref{eq:Sum_n_Iverson_bracket_identity}}}{=}  \sum_{r=0}^{m-1} \sum_{q=0}^{\infty} t_{k-1}( q \lambda_r + \mu_r ) z^{q m + r}    \\
& =  \sum_{r=0}^{m-1} \sum_{q=0}^{\infty} \sum_{n=0}^{\infty} \iverson{n = q \lambda_r + \mu_r} t_{k-1}(n) z^{q m + r}      \\
& \stackrel{\mathclap{\eqref{eq:Kronecker_delta_integral_representation}}}{=} \sum_{r=0}^{m-1} \sum_{q=0}^{\infty} \sum_{n=0}^{\infty} \left( \frac{1}{2\pi\iu} \cintpos_{\mathcal{C}(\rho)} {u^{n- q \lambda_r - \mu_r -1}} \rd u \right) t_{k-1}(n) z^{q m + r}   \\
& = \sum_{r=0}^{m-1} \frac{z^r}{2\pi\iu} \cintpos_{\mathcal{C}(\rho)} \frac{1}{u^{\mu_r + 1}} \left( \sum_{q=0}^{\infty} \left(\frac{z^m}{u^{\lambda_r}}\right)^q \right) \left( \sum_{n=0}^{\infty} t_{k-1}(n) u^n \right) \! \rd u        \\ 
& = \sum_{r=0}^{m-1} \frac{z^r}{2\pi\iu} \cintpos_{\mathcal{C}(\rho)} \frac{1}{u^{\mu_r + 1}} \frac{1}{1-\frac{z^m}{u^{\lambda_r}}} f_{k-1}(u)   \rd u      \\
& =  \sum_{r=0}^{m-1} \frac{z^r}{2\pi\iu} \cintpos_{\mathcal{C}(\rho)} \frac{1}{u^{\mu_r - \lambda_r + 1} (u^{\lambda_r}-z^m)} f_{k-1}(u)   \rd u      \\
& \stackrel{\mathclap{\eqref{eq:Partial_fraction_decomposition_wrt_u}}}{=} \sum_{r=0}^{m-1} z^r \left( \sum_{l=0}^{\mu_r - \lambda_r} {\frac{\sigma_{r,l}(z)}{2\pi\iu} \cintpos_{\mathcal{C}(\rho)} \frac{f_{k-1}(u)}{u^{l+1}} \rd u} + \sum_{l=0}^{\lambda_r-1} {\frac{\tau_{r,l}(z)}{2\pi\iu} \cintpos_{\mathcal{C}(\rho)} \frac{f_{k-1}(u)}{u - \phi_{r,l}(z)} \rd u} \right)     \\
&  \stackrel{\mathclap{\eqref{eq:Coefficients_a_n},\eqref{eq:Residue_with_independent_poles}}}{=}  \quad\!  \sum_{r=0}^{m-1} z^r \left( \sum_{l=0}^{\mu_r - \lambda_r} { \frac{\sigma_{r,l}(z)}{l!} f_{k-1}^{(l)}(0) } + \sum_{l=0}^{\lambda_r-1} {{\tau_{r,l}(z)} f_{k-1}(\phi_{r,l}(z))} \right)    ,
\end{align*}
with initial condition $f_0(z) = \sum_{n=0}^{\infty} {n z^n} = \frac{z}{(1-z)^2}$.

Note that the interchange of the order of summations in the third equality is valid, because by Proposition~\ref{prop:Unconditional_uniform_and_pointwise_convergence_of_GFs} the power series $f_k(z) = \sum_{n=0}^{\infty} t_k(n) z^n$ is  unconditionally pointwise convergent on $\DD$ (and therefore on $\DD_{*} \subseteq \DD$). 

In addition, according to \cite[Theorem~7.16 and its Corollary]{Rudin_1976}, the interchange of the order of summation and contour-integration in the seventh equality is possible due to the particular choice of the radius $\rho$. More precisely, this choice ensures \emph{unconditional uniform convergence on the integration contour $\mathcal{C}(\rho)$} of the (single) infinite series: $\sum_{q=0}^{\infty} \left(\frac{z^m}{u^{\lambda_r}}\right)^q$ (since $\abs{{z^m}/{u^{\lambda_r}}} = {\abs{z}^m}/{\rho^{\lambda_r}} < 1$, for all $u \in \mathcal{C}(\rho)$) and $\sum_{n=0}^{\infty} t_{k-1}(n) u^n$ (since $\abs{u} = \rho < 1$, for all $u \in \mathcal{C}(\rho)$); cf.~Proposition~\ref{prop:Sufficient_condition_for_unconditional_uniform_and_pointwise_convergence_of_multiple_power_series}~and~Proposition~\ref{prop:Unconditional_uniform_and_pointwise_convergence_of_GFs}, respectively.

\subsection{Proof of Statement~\ref{statement:1.c}}

According to \eqref{eq:Generating_function_F} and using Lemma~\ref{lem:Cauchy_integral_formulas}, we get 
\begin{align*} 
F(z,w) & = \sum_{k=0}^{\infty} f_k(z) w^k = f_0(z) + \sum_{k=1}^{\infty} f_k(z) w^k   \\
& \stackrel{\mathclap{\eqref{eq:Functional_recurrence_f_k}}}{=} f_0(z) +  \sum_{r=0}^{m-1} z^r \left( \sum_{l=0}^{\mu_r - \lambda_r} {{\frac{\sigma_{r,l}(z)}{l!} \sum_{k=1}^{\infty} f_{k-1}^{(l)}(0) w^k}} + \sum_{l=0}^{\lambda_r-1} {{\tau_{r,l}(z)} \sum_{k=1}^{\infty} f_{k-1}(\phi_{r,l}(z)) w^k} \right)     \\
& = f_0(z) + w \sum_{r=0}^{m-1} z^r \left( \sum_{l=0}^{\mu_r - \lambda_r} {{\frac{\sigma_{r,l}(z)}{l!} \sum_{k=0}^{\infty} f_{k}^{(l)}(0) w^k}} + \sum_{l=0}^{\lambda_r-1} {{\tau_{r,l}(z)} \sum_{k=0}^{\infty} f_{k}(\phi_{r,l}(z)) w^k} \right)    \\
& \stackrel{\mathclap{\eqref{eq:Uniform_convergence_of_series_of_derivatives}}}{=} f_0(z) + w \sum_{r=0}^{m-1} z^r \left( \sum_{l=0}^{\mu_r - \lambda_r} {{\frac{\sigma_{r,l}(z)}{l!} F_{*}^{(l)}(0,w)}} + \sum_{l=0}^{\lambda_r-1} {{\tau_{r,l}(z)} F(\phi_{r,l}(z),w)} \right)    ,
\end{align*} 
where in the last equality we also used Proposition~\ref{prop:Unconditional_uniform_convergence_implies_unconditional_pointwise_convergence}. 

This concludes the proof of Theorem~\ref{thrm:Theorem_1}.

\section{Proof of Proposition~\ref{prop:Unconditional_uniform_and_pointwise_convergence_of_vector_GFs}} \label{sec:Proof_Proposition_2}

\begin{lemma}[Rough upper bound on $\maxnorm{\mathbf{t}_k(\mathbf{n})}$] \label{lem:Rough_upper_bound_on_t_k_multi-dimensional}
It holds that  
\begin{equation*}
\maxnorm{\mathbf{t}_k(\mathbf{n})} \leq \begin{cases}
(d \maxnorm{\mathbf{A}_{\circ}})^k (\maxnorm{\mathbf{n}} + \chi) - \chi , & \mathrm{if}\ d \maxnorm{\mathbf{A}_{\circ}} \neq 1  , \\
\maxnorm{\mathbf{n}} + \maxnorm{\mathbf{B}} k , & \mathrm{if}\ d \maxnorm{\mathbf{A}_{\circ}} = 1  ,
\end{cases}
\end{equation*}
for all $(k,\mathbf{n}) \in \NN_0^{d+1}$, where $\chi \coloneqq \frac{\maxnorm{\mathbf{B}}}{d \maxnorm{\mathbf{A}_{\circ}} - 1}$. 
\end{lemma}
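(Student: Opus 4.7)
The plan is to mirror the induction argument used in Lemma~\ref{lem:Rough_upper_bound_on_t_k}, replacing absolute values and the scalar max-norm of $\mathbf{a},\mathbf{b}$ by the vector/matrix max-norms of $\mathbf{A}_{\circ}$ and $\mathbf{B}$. The only genuinely new ingredient is the matrix-vector estimate
\[
\maxnorm{\mathbf{A} \mathbf{x}} \;\leq\; d \, \maxnorm{\mathbf{A}} \, \maxnorm{\mathbf{x}}, \qquad \mathbf{A} \in \CC^{d \times d},\ \mathbf{x} \in \CC^d,
\]
which follows from the trivial row bound $\bigl|(\mathbf{A}\mathbf{x})_i\bigr| \leq \sum_{j=1}^d |A_{ij}||x_j| \leq d \, \maxnorm{\mathbf{A}} \, \maxnorm{\mathbf{x}}$. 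This is precisely where the extra factor of $d$ in the statement originates, and it is worth emphasising explicitly since the one-dimensional proof does not see it.

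First, I would fix $k \in \NN$ and $\mathbf{n} \in \NN_0^d$. By Definition~\ref{def:Multi-dimensional_generalized_Collatz_dynamics}, since $\mathbf{t}_{k-1}(\mathbf{n}) \in \NN_0^d$, there exists a unique $\mathbf{r} \in \mathcal{R}(\mathbf{m})$ with $\mathbf{t}_{k-1}(\mathbf{n}) \equiv \mathbf{r} \pmod{\mathbf{m}}$, so that
\[
\mathbf{t}_k(\mathbf{n}) \;=\; \mathbf{t}\bigl(\mathbf{t}_{k-1}(\mathbf{n})\bigr) \;=\; \mathbf{A}_{\mathbf{r}} \, \mathbf{t}_{k-1}(\mathbf{n}) + \mathbf{b}_{\mathbf{r}}.
\]
Applying the triangle inequality for $\maxnorm{\cdot}$, the matrix estimate above, and the obvious bounds $\maxnorm{\mathbf{A}_{\mathbf{r}}} \leq \maxnorm{\mathbf{A}_{\circ}}$ and $\maxnorm{\mathbf{b}_{\mathbf{r}}} \leq \maxnorm{\mathbf{B}}$, I obtain the one-step recursion
\[
\maxnorm{\mathbf{t}_k(\mathbf{n})} \;\leq\; d \maxnorm{\mathbf{A}_{\circ}} \, \maxnorm{\mathbf{t}_{k-1}(\mathbf{n})} + \maxnorm{\mathbf{B}},
\]
which is the direct multi-dimensional analogue of \eqref{eq:Triangle_inequality_for_t_k}.

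From here the argument proceeds verbatim as in the one-dimensional case. A routine induction on $k$ (with $\mathbf{n}$ fixed), whose base case $k=0$ reduces to $\maxnorm{\mathbf{t}_0(\mathbf{n})} = \maxnorm{\mathbf{n}}$, yields
\[
\maxnorm{\mathbf{t}_k(\mathbf{n})} \;\leq\; (d \maxnorm{\mathbf{A}_{\circ}})^k \maxnorm{\mathbf{n}} + \maxnorm{\mathbf{B}} \sum_{j=0}^{k-1} (d \maxnorm{\mathbf{A}_{\circ}})^j .
\]
Evaluating the geometric sum as $\frac{(d\maxnorm{\mathbf{A}_{\circ}})^k - 1}{d\maxnorm{\mathbf{A}_{\circ}} - 1}$ when $d\maxnorm{\mathbf{A}_{\circ}} \neq 1$ and as $k$ when $d\maxnorm{\mathbf{A}_{\circ}} = 1$, and then grouping the $\maxnorm{\mathbf{n}}$-independent terms into $\chi = \maxnorm{\mathbf{B}}/(d\maxnorm{\mathbf{A}_{\circ}} - 1)$, produces the two cases in the statement.

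There is no serious obstacle: the derivation is a straightforward vectorisation of the proof of Lemma~\ref{lem:Rough_upper_bound_on_t_k}, with the only bookkeeping issue being the $\ell^{\infty} \to \ell^{\infty}$ operator-norm constant $d$. Note that condition~\ref{cond:2.a} ensures $\maxnorm{\mathbf{A}_{\circ}} > 0$, so the case distinction and the quantity $\chi$ are always well defined; this is the multi-dimensional counterpart of the footnote following Proposition~\ref{prop:Unconditional_uniform_and_pointwise_convergence_of_GFs}.
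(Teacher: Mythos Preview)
Your proposal is correct and follows essentially the same route as the paper's own proof: derive the one-step recursion $\maxnorm{\mathbf{t}_k(\mathbf{n})} \leq (d\maxnorm{\mathbf{A}_{\circ}})\maxnorm{\mathbf{t}_{k-1}(\mathbf{n})} + \maxnorm{\mathbf{B}}$ via the triangle inequality and the matrix--vector bound $\maxnorm{\mathbf{A}\mathbf{x}} \leq d\,\maxnorm{\mathbf{A}}\,\maxnorm{\mathbf{x}}$, then iterate by induction and sum the geometric series. The paper phrases the matrix bound slightly more generally (as $\maxnorm{\mathbf{D}\mathbf{E}} \leq m\,\maxnorm{\mathbf{D}}\,\maxnorm{\mathbf{E}}$ for conformable matrices), but the argument is otherwise identical.
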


\begin{proof}
Based on \eqref{eq:Multi-dimensional_generalized_Collatz_dynamics} and \eqref{eq:Multi-dimensional_generalized_Collatz_map}, we have
\begin{equation*} 
\begin{split}
\maxnorm{\mathbf{t}_k(\mathbf{n})} = \maxnorm{\mathbf{t}(\mathbf{t}_{k-1}(\mathbf{n}))} & \leq \maxnorm{\mathbf{A}_{\mathbf{r}} \mathbf{t}_{k-1}(\mathbf{n})} + \maxnorm{\mathbf{b}_{\mathbf{r}}}  \\
& \leq d \maxnorm{\mathbf{A}_{\mathbf{r}}}\maxnorm{\mathbf{t}_{k-1}(\mathbf{n})} + \maxnorm{\mathbf{b}_{\mathbf{r}}},  
\end{split}
\end{equation*}
if $\mathbf{r} \in {\mathcal{R}(\mathbf{m})}$ and $\mathbf{t}_{k-1}(\mathbf{n}) \equiv \mathbf{r} \pmod{\mathbf{m}}$. The first inequality follows form the triangle inequality for norms, while the second one from the fact that: for any $k \times m$ matrix $\mathbf{D}$ and any $m \times n$ matrix $\mathbf{E}$,  it holds that $\maxnorm{\mathbf{D} \mathbf{E}} \leq m \maxnorm{\mathbf{D}} \maxnorm{\mathbf{E}}$. Furthermore, $\maxnorm{\mathbf{A}_{\mathbf{r}}} \leq \maxnorm{\mathbf{A}_{\circ}}$ and $\maxnorm{\mathbf{b}_{\mathbf{r}}} \leq \maxnorm{\mathbf{B}}$ for all $\mathbf{r} \in {\mathcal{R}(\mathbf{m})}$, thus yielding  
\begin{equation*}
\maxnorm{\mathbf{t}_k(\mathbf{n})} \leq (d \maxnorm{\mathbf{A}_{\circ}}) \maxnorm{\mathbf{t}_{k-1}(\mathbf{n})} + \maxnorm{\mathbf{B}}  ,
\end{equation*}
for all $k \in \NN_0$ and $\mathbf{n} \in \NN_0^d$.

Similar to the inductive proof of Lemma~\ref{lem:Rough_upper_bound_on_t_k}, by replacing $\maxnorm{\mathbf{a}} \mapsto d \maxnorm{\mathbf{A}_{\circ}}$, $\maxnorm{\mathbf{b}} \mapsto \maxnorm{\mathbf{B}}$ therein and using the initial condition $\mathbf{t}_0(\mathbf{n}) = \mathbf{n}$ (cf. Eqs.~\eqref{eq:Triangle_inequality_for_t_k} and~\eqref{eq:Inequality_by_induction_on_k}), we obtain 
\begin{align*} 
\maxnorm{\mathbf{t}_k(\mathbf{n})} & \leq (d \maxnorm{\mathbf{A}_{\circ}})^k \maxnorm{\mathbf{n}} + \maxnorm{\mathbf{B}} \sum_{j=0}^{k-1} {(d \maxnorm{\mathbf{A}_{\circ}})^j}   \\
& = \begin{cases} 
(d \maxnorm{\mathbf{A}_{\circ}})^k \maxnorm{\mathbf{n}} + \maxnorm{\mathbf{B}} \frac{(d \maxnorm{\mathbf{A}_{\circ}})^k - 1}{d \maxnorm{\mathbf{A}_{\circ}} - 1} , & \mathrm{if}\ d \maxnorm{\mathbf{A}_{\circ}} \neq 1  , \\
(d \maxnorm{\mathbf{A}_{\circ}})^k \maxnorm{\mathbf{n}} + \maxnorm{\mathbf{B}} k , & \mathrm{if}\ d \maxnorm{\mathbf{A}_{\circ}} = 1  .    
\end{cases}     \\
& = \begin{cases}
(d \maxnorm{\mathbf{A}_{\circ}})^k (\maxnorm{\mathbf{n}} + \chi) - \chi , & \mathrm{if}\ d \maxnorm{\mathbf{A}_{\circ}} \neq 1  , \\
\maxnorm{\mathbf{n}} + \maxnorm{\mathbf{B}} k , & \mathrm{if}\ d \maxnorm{\mathbf{A}_{\circ}} = 1  .
\end{cases}   
\end{align*} 
\end{proof}

By virtue of Lemma~\ref{lem:Rough_upper_bound_on_t_k_multi-dimensional} and the triangle inequality, we obtain  
\begin{align*} 
\maxnorm{\mathbf{t}_k(\mathbf{n})} & \leq \begin{cases}
\abs{(d \maxnorm{\mathbf{A}_{\circ}})^k \maxnorm{\mathbf{n}} + (d \maxnorm{\mathbf{A}_{\circ}})^k \chi - \chi} , & \mathrm{if}\ d \maxnorm{\mathbf{A}_{\circ}} \neq 1  , \\
\maxnorm{\mathbf{n}} + \maxnorm{\mathbf{B}} k , & \mathrm{if}\ d \maxnorm{\mathbf{A}_{\circ}} = 1  ,
\end{cases}   \\
& \leq \begin{cases} 
(d \maxnorm{\mathbf{A}_{\circ}})^k \maxnorm{\mathbf{n}} + (d \maxnorm{\mathbf{A}_{\circ}})^k \abs{\chi} + \abs{\chi} , & \mathrm{if}\ d \maxnorm{\mathbf{A}_{\circ}} \neq 1  , \\
\maxnorm{\mathbf{n}} + \maxnorm{\mathbf{B}} k , & \mathrm{if}\ d \maxnorm{\mathbf{A}_{\circ}} = 1  ,
\end{cases}   \\
& \leq \begin{cases} 
(\abs{\chi} + 1) ((d \maxnorm{\mathbf{A}_{\circ}})^k + 1) (\maxnorm{\mathbf{n}} + 1) , & \mathrm{if}\ d \maxnorm{\mathbf{A}_{\circ}} \neq 1  , \\
(\maxnorm{\mathbf{B}} k + 1) (\maxnorm{\mathbf{n}} + 1) , & \mathrm{if}\ d \maxnorm{\mathbf{A}_{\circ}} = 1  ,
\end{cases}   \\
& \leq \begin{cases} 
(\abs{\chi} + 1) ((d \maxnorm{\mathbf{A}_{\circ}})^k + 1) {\prod_{j=1}^d (n_j + 1)} , & \mathrm{if}\ d \maxnorm{\mathbf{A}_{\circ}} \neq 1  , \\
(\maxnorm{\mathbf{B}} k + 1) {\prod_{j=1}^d (n_j + 1)} , & \mathrm{if}\ d \maxnorm{\mathbf{A}_{\circ}} = 1  ,
\end{cases}  
\end{align*} 
where the last inequality follows from the relation: $\maxnorm{\mathbf{n}} + 1 \leq \sum_{j=1}^d {n_j} + 1 \leq \prod_{j=1}^d (n_j + 1)$, for all $\mathbf{n} \in \NN_0^d$. Subsequently, let us choose $h_j(n) = h(n) \coloneqq n+1$, for all $j \in \{1,\dots,d\}$, and $h_{d+1} (k) \coloneqq (d \maxnorm{\mathbf{A}_{\circ}})^k + 1$ when $d \maxnorm{\mathbf{A}_{\circ}} \neq 1$, or $h_{d+1} (k) \coloneqq \maxnorm{\mathbf{B}} k + 1$ when $d \maxnorm{\mathbf{A}_{\circ}} = 1$. As in the proof of Proposition~\ref{prop:Unconditional_uniform_and_pointwise_convergence_of_GFs} (one-dimensional case) with the substitutions $\maxnorm{\mathbf{a}} \mapsto d \maxnorm{\mathbf{A}_{\circ}}$ and $\maxnorm{\mathbf{b}} \mapsto \maxnorm{\mathbf{B}}$, we get $\ell_j \coloneqq \left(\limsup_{n \to \infty}  \left(h_j(n)\right)^{1/n} \right)^{-1} = 1 > 0$, for all $j \in \{1,\dots,d\}$, and $\ell_{d+1} \coloneqq  \left(\limsup_{k \to \infty}  \left(h_{d+1}(k)\right)^{1/k} \right)^{-1}  = \min \{ (d \maxnorm{\mathbf{A}_{\circ}})^{-1},1 \} = R_w > 0$. If we consider each component power series $\sum_{\mathbf{n} \in \NN_0^d} {{t_{j,k}(\mathbf{n})} \, \mathbf{z}^{\mathbf{n}}}$ and $\sum_{k \in \NN_0} \sum_{\mathbf{n} \in \NN_0^d} {t_{j,k}(\mathbf{n}) \, \mathbf{z}^{\mathbf{n}} w^k}$ with $\abs{t_{j,k}(\mathbf{n})} \leq \maxnorm{\mathbf{t}_k(\mathbf{n})}$, then the desired result follows directly from Proposition~\ref{prop:Sufficient_condition_for_unconditional_uniform_and_pointwise_convergence_of_multiple_power_series}. Note that $\maxnorm{\mathbf{z}} \leq \rho_{\mathbf{z}}$ is equivalent to $\abs{z_j} \leq \rho_{\mathbf{z}}$ for all $j \in \{1,\dots,d\}$.

\section{Proof of Theorem~\ref{thrm:Theorem_2}} \label{sec:Proof_Theorem_2}

\subsection{Proof of Statement~\ref{statement:2.a}} \label{sec:Proof_Statement_2.a}

Let $\textbf{z} \in \DD_{*}^d \coloneqq (\DD \setminus\{0\})^d$, which implies that $z_j \neq 0$ for all $j \in \{1,\dots,d\}$. The finite set $\mathcal{L}(\mathbf{r}) \subseteq \NN_0^{d+1}$ contains multi-index-plus-index pairs $(\boldsymbol{\ell},\nu)$ and depends on $\mathbf{r}$. Since a given multi-index $\boldsymbol{\ell}$, e.g., $\boldsymbol{\ell} = \mathbf{0}$, may occur more than once in the final summation of \eqref{eq:Partial_fraction_decomposition_wrt_u_multi-dimensional}, we need the auxiliary index $\nu$ to distinguish its terms. Without loss of generality, the index $\nu$ ranges from $0$ to $(\operatorname{multiplicity}(\boldsymbol{\ell})-1)$, where $\operatorname{multiplicity}(\boldsymbol{\ell})$ is the number of occurrences of the multi-exponent $\boldsymbol{\ell}+\mathbf{1}$ in the final summation of \eqref{eq:Partial_fraction_decomposition_wrt_u_multi-dimensional}. In addition, observe that $(\mathbf{0},0) \in \mathcal{L}(\mathbf{r})$ because of condition \ref{cond:2.a}, $\eta_{\mathbf{r},\boldsymbol{\ell},\nu}(\mathbf{z}) = \prod_{j=1}^d {\overline{\eta}_{\mathbf{r},\boldsymbol{\ell},\nu,j}(z_j)}$ with $\overline{\eta}_{\mathbf{r},\boldsymbol{\ell},\nu,j}(z_j) \in \bigcup_l \{\sigma_{\mathbf{r},j,l}(z_j)\} \cup \bigcup_l\{\tau_{\mathbf{r},j,l}(z_j)\}$, and $\boldsymbol{\varphi}_{\mathbf{r},\boldsymbol{\ell},\nu}(\mathbf{z}) = [\varphi_{\mathbf{r},\boldsymbol{\ell},\nu,1}(z_1),\dots,\varphi_{\mathbf{r},\boldsymbol{\ell},\nu,d}(z_d)]^{\top}$ with $\varphi_{\mathbf{r},\boldsymbol{\ell},\nu,j}(z_j) \in \{0\} \cup \bigcup_l \{\phi_{\mathbf{r},j,l}(z_j)\}$. 

More precisely, we will give a proof by induction on $d$, which can also be used as a \emph{recursive/iterative method} to compute the $\mathcal{L}(\mathbf{r})$, $\{\eta_{\mathbf{r},\boldsymbol{\ell},\nu}(\mathbf{z})\}$ and $\{\boldsymbol{\varphi}_{\mathbf{r},\boldsymbol{\ell},\nu}(\mathbf{z})\}$. Recall that the uniqueness of $\{\sigma_{\mathbf{r},j,l}(z_j)\}$ and $\{\tau_{\mathbf{r},j,l}(z_j)\}$ is guaranteed by \eqref{eq:Partial_fraction_decomposition_wrt_u}.

\emph{Basis step}: For $d=1$, given $\{\sigma_{r,l}(z)\}$ and $\{\tau_{r,l}(z)\}$, we can write 
\begin{equation*}
\frac{1}{u^{\mu_r - \lambda_r + 1} (u^{\lambda_r}-z^m)} = \sum_{l=0}^{\mu_r - \lambda_r} {\frac{\sigma_{r,l}(z)}{u^{l+1}}} + \sum_{l=0}^{\lambda_r-1} {\frac{\tau_{r,l}(z)}{u - \phi_{r,l}(z)}} = \sum_{(\ell,\nu) \in \mathcal{L}(r)} \frac{\eta_{r,\ell,\nu}(z)}{\left(u - \varphi_{r,\ell,\nu}(z)\right)^{\ell+1}}   ,
\end{equation*}
where $\mathcal{L}(r) =  \bigcup_{\ell = 1}^{\mu_r - \lambda_r} \{(\ell,0)\}  \cup  \bigcup_{\nu = 0}^{\lambda_r - \iverson{\mu_r < \lambda_r}} \{(0,\nu)\}  \subseteq \NN_0^2$, $\eta_{r,\ell,0}(z) = \sigma_{r,\ell}(z)$ and $\varphi_{r,\ell,0}(z) = 0$ for all $\ell \in \{1,\dots,\mu_r - \lambda_r\}$, $\eta_{r,0,\nu}(z) = \tau_{r,\nu}(z)$ and $\varphi_{r,0,\nu}(z) = \phi_{r,\nu}(z)$ for all $\nu \in \{0,\dots,\lambda_r-1\}$, $\eta_{r,0,\lambda_r}(z) = \sigma_{r,0}(z)$ and $\varphi_{r,0,\lambda_r}(z) = 0$.

\emph{Inductive step}: Let us choose an arbitrary integer $d \geq 2$. Suppose that the partial fraction decomposition in \eqref{eq:Partial_fraction_decomposition_wrt_u_multi-dimensional} is true for dimension $d-1$ (\emph{inductive hypothesis}). Then, we can prove that it is also true for dimension $d$. In particular, by using the identity $\prod_{j=1}^d {(\cdot)_j} = \left( \prod_{j=1}^{d-1} {(\cdot)_j} \right) {(\cdot)_d}$ and the basis step, we obtain
\begin{align*} 
\prod_{j=1}^d {\frac{1}{u_j^{\mu_{\mathbf{r},j} - \lambda_{\mathbf{r},j} + 1} (u_j^{\lambda_{\mathbf{r},j}}-z_j^{m_j})}} & = \left( \prod_{j=1}^{d-1} {\frac{1}{u_j^{\mu_{\mathbf{r},j} - \lambda_{\mathbf{r},j} + 1} (u_j^{\lambda_{\mathbf{r},j}}-z_j^{m_j})}} \right) {\frac{1}{u_d^{\mu_{\mathbf{r},d} - \lambda_{\mathbf{r},d} + 1} (u_d^{\lambda_{\mathbf{r},d}}-z_d^{m_d})}}  \\
& = \left( \sum_{(\boldsymbol{\ell}',\nu') \in \mathcal{L}'(\mathbf{r})} {\frac{\eta'_{\mathbf{r},\boldsymbol{\ell}',\nu'}(\mathbf{z}_{-d})}{\left( \mathbf{u}_{-d} - \boldsymbol{\varphi}'_{\mathbf{r},\boldsymbol{\ell}',\nu'}(\mathbf{z}_{-d}) \right)^{\boldsymbol{\ell}'+\mathbf{1}}}} \right)  \\
& \quad\; . \left(  \sum_{(\ell'',\nu'') \in \mathcal{L}''(\mathbf{r})} {\frac{\eta''_{\mathbf{r},\ell'',\nu''}(z_d)}{\left( u_d - \varphi''_{\mathbf{r},\ell'',\nu''}(z_d) \right)^{\ell''+1}}} \right)     \\
& =  \sum_{\substack{(\boldsymbol{\ell}',\nu') \in \mathcal{L}'(\mathbf{r}), \\ (\ell'',\nu'') \in \mathcal{L}''(\mathbf{r})}}  \frac{\eta'_{\mathbf{r},\boldsymbol{\ell}',\nu'}(\mathbf{z}_{-d}) \, \eta''_{\mathbf{r},\ell'',\nu''}(z_d)}{\left( \mathbf{u}_{-d} - \boldsymbol{\varphi}'_{\mathbf{r},\boldsymbol{\ell}',\nu'}(\mathbf{z}_{-d}) \right)^{\boldsymbol{\ell}'+\mathbf{1}} \left( u_d - \varphi''_{\mathbf{r},\ell'',\nu''}(z_d) \right)^{\ell''+1}}         \\
& =  \sum_{(\boldsymbol{\ell},\nu) \in \mathcal{L}(\mathbf{r})} {\frac{\eta_{\mathbf{r},\boldsymbol{\ell},\nu}(\mathbf{z})}{\left( \mathbf{u} - \boldsymbol{\varphi}_{\mathbf{r},\boldsymbol{\ell},\nu}(\mathbf{z}) \right)^{\boldsymbol{\ell}+\mathbf{1}}}}     ,    
\end{align*} 
where $\mathcal{L}'(\mathbf{r}) \subseteq \NN_0^d$, $\boldsymbol{\varphi}'_{\mathbf{r},\boldsymbol{\ell}',\nu'}(\mathbf{z}_{-d}) \in \CC^{d-1}$, $\mathcal{L}''(\mathbf{r}) \subseteq \NN_0^2$, $\mathcal{L}(\mathbf{r}) \subseteq \NN_0^{d+1}$, $\boldsymbol{\varphi}_{\mathbf{r},\boldsymbol{\ell},\nu}(\mathbf{z}) \in \CC^d$. Specifically, given the sets $\mathcal{L}'(\mathbf{r})$ and $\mathcal{L}''(\mathbf{r})$, the set $\mathcal{L}(\mathbf{r})$ is constructed as follows. First, for each $\boldsymbol{\ell} \in \NN_0^d$, let us define the 
\begin{equation*}
\operatorname{multiplicity}(\boldsymbol{\ell}) \coloneqq \card{\{(\boldsymbol{\ell}',\nu',\ell'',\nu'') \in \mathcal{L}'(\mathbf{r}) \times \mathcal{L}''(\mathbf{r}) : \boldsymbol{\ell} = (\boldsymbol{\ell}',\ell'')\}} .
\end{equation*}
Then, for every $\boldsymbol{\ell} \in \NN_0^d$ with $\operatorname{multiplicity}(\boldsymbol{\ell}) \geq 1$,\footnote{There exist \emph{finitely many} such $\boldsymbol{\ell}$, because the sets $\mathcal{L}'(\mathbf{r})$ and $\mathcal{L}''(\mathbf{r})$ are finite.} we define the partial function $h_{\boldsymbol{\ell}} \colon \NN_0^2 \to \{0,\dots,\operatorname{multiplicity}(\boldsymbol{\ell})-1\}$ so that $h_{\boldsymbol{\ell}}(\nu',\nu'') = \nu$ if the position of the pair $(\nu',\nu'')$ in the set 
\begin{equation*}
\mathcal{H}({\boldsymbol{\ell}}) \coloneqq \{(\nu_1,\nu_2) \in \NN_0^2: \boldsymbol{\ell} = (\boldsymbol{\ell}',\ell''), \,  (\boldsymbol{\ell}',\nu_1,\ell'',\nu_2) \in \mathcal{L}'(\mathbf{r}) \times \mathcal{L}''(\mathbf{r})\} 
\end{equation*}
is equal to $\nu$ (assuming lexicographic order and starting from position $0$). Note that $h_{\boldsymbol{\ell}}$ is a bijective function on its domain of definition, so its inverse $h_{\boldsymbol{\ell}}^{-1}$ exists. Now, we can define the set 
\begin{equation*}
\begin{split}
\mathcal{L}(\mathbf{r}) \coloneqq \{(\boldsymbol{\ell},\nu) \in \NN_0^{d+1}: & \, \exists (\boldsymbol{\ell}',\nu',\ell'',\nu'') \in \mathcal{L}'(\mathbf{r}) \times \mathcal{L}''(\mathbf{r}) \ \text{such that} \\
& \ \boldsymbol{\ell} = (\boldsymbol{\ell}',\ell'') \ \text{and} \ \nu = h_{\boldsymbol{\ell}}(\nu',\nu'')\}  .
\end{split}
\end{equation*}
Finally, $\eta_{\mathbf{r},\boldsymbol{\ell},\nu}(\mathbf{z}) \coloneqq \eta'_{\mathbf{r},\boldsymbol{\ell}',\nu'}(\mathbf{z}_{-d}) \, \eta''_{\mathbf{r},\ell'',\nu''}(z_d)$ and $\boldsymbol{\varphi}_{\mathbf{r},\boldsymbol{\ell},\nu}(\mathbf{z}) \coloneqq [\boldsymbol{\varphi}'_{\mathbf{r},\boldsymbol{\ell}',\nu'}(\mathbf{z}_{-d})^{\top},\varphi''_{\mathbf{r},\ell'',\nu''}(z_d)]^{\top}$, where $(\boldsymbol{\ell}',\ell'') = \boldsymbol{\ell}$ and $(\nu',\nu'') = h_{\boldsymbol{\ell}}^{-1}(\nu)$. 

Hence, the multi-dimensional partial fraction decomposition in \eqref{eq:Partial_fraction_decomposition_wrt_u_multi-dimensional} holds for all $d \in \NN$.

\subsection{Proof of Statement~\ref{statement:2.b}}

Fix a $\mathbf{z} \in \DD_{*}^d \coloneqq (\DD \setminus\{0\})^d$ and a $k \in \NN$. Let us choose a poly-radius $\boldsymbol{\rho} \in \RR_{+}^d$ so that it satisfies the condition: $\abs{z_j}^{m_j/\lambda_{\mathbf{r},j}} < \rho_j < 1$ for all $\mathbf{r} \in \mathcal{R}(\mathbf{m})$ and $j \in \{1,\dots,d\}$, or equivalently $\max\left\{ \abs{z_j}^{m_j/\lambda_{\mathbf{r},j}} : \mathbf{r} \in \mathcal{R}(\mathbf{m}) \right\} < \rho_j < 1$ for all $j \in \{1,\dots,d\}$. Since $0 < \abs{z_j} < 1$ and the function $y(x) = c^x$ is decreasing in $\RR$ for any constant $0 < c < 1$, the condition ultimately becomes $\abs{z_j}^{m_j/\lambda_j^{\max}} < \rho_j < 1$, where $\lambda_j^{\max} \coloneqq \max\{ \lambda_{\mathbf{r},j} : \mathbf{r} \in \mathcal{R}(\mathbf{m}) \}$, for all $j \in \{1,\dots,d\}$. 
In essence, $\boldsymbol{\rho}$ is appropriately chosen to ensure the interchange of summation and contour-integration; see below for more details. 

Based on \eqref{eq:Vector_generating_function_f_k} and using Lemmas~\ref{lem:Multi-dimensional_Iverson-bracket_sum_identities},~\ref{lem:Integral_representation_multi-dimensional_Kronecker_delta}~and~\ref{lem:Cauchy_integral_formulas}, we obtain 

\begin{align*} 
\mathbf{f}_k(\mathbf{z}) &  \stackrel{\mathclap{\eqref{eq:Multi-dimensional_generalized_Collatz_dynamics}}}{=}  \sum_{\mathbf{n} \in \NN_0^d} {\mathbf{t}_{k-1}(\mathbf{t}(\mathbf{n})) \, \mathbf{z}^{\mathbf{n}}}  \stackrel{\mathclap{\eqref{eq:Sum_q_r_Iverson_bracket_identity_multi-dimensional}}}{=}  \sum_{\mathbf{n} \in \NN_0^d}  \sum_{\mathbf{q} \in \NN_0^d} \sum_{\mathbf{r} \in {\mathcal{R}(\mathbf{m})}} {\mathbf{t}_{k-1}(\mathbf{t}(\mathbf{n})) \, \mathbf{z}^{\mathbf{n}} \iverson{\mathbf{n} = \mathbf{q} \odot \mathbf{m} + \mathbf{r}}}   \\
& = \sum_{\mathbf{r} \in {\mathcal{R}(\mathbf{m})}} \sum_{\mathbf{q} \in \NN_0^d} {\mathbf{t}_{k-1}( \mathbf{q} \odot \boldsymbol{\lambda}_{\mathbf{r}} + \boldsymbol{\mu}_{\mathbf{r}} ) \, \mathbf{z}^{\mathbf{q} \odot \mathbf{m} + \mathbf{r}} \sum_{\mathbf{n} \in \NN_0^d} \iverson{\mathbf{n} = \mathbf{q} \odot \mathbf{m} + \mathbf{r}} }  \\
&  \stackrel{\mathclap{\eqref{eq:Sum_n_Iverson_bracket_identity_multi-dimensional}}}{=} \sum_{\mathbf{r} \in {\mathcal{R}(\mathbf{m})}} \sum_{\mathbf{q} \in \NN_0^d} {\mathbf{t}_{k-1}( \mathbf{q} \odot \boldsymbol{\lambda}_{\mathbf{r}} + \boldsymbol{\mu}_{\mathbf{r}} ) \, \mathbf{z}^{\mathbf{q} \odot \mathbf{m} + \mathbf{r}} }   \\
& = \sum_{\mathbf{r} \in {\mathcal{R}(\mathbf{m})}} \sum_{\mathbf{q} \in \NN_0^d} \sum_{\mathbf{n} \in \NN_0^d}  {\iverson{\mathbf{n}= \mathbf{q} \odot \boldsymbol{\lambda}_{\mathbf{r}} + \boldsymbol{\mu}_{\mathbf{r}}} \mathbf{t}_{k-1}( \mathbf{n} ) \, \mathbf{z}^{\mathbf{q} \odot \mathbf{m} + \mathbf{r}}}   \\
& \stackrel{\mathclap{\eqref{eq:Kronecker_delta_integral_representation_multi-dimensional}}}{=}  \sum_{\mathbf{r} \in {\mathcal{R}(\mathbf{m})}} \sum_{\mathbf{q} \in \NN_0^d} \sum_{\mathbf{n} \in \NN_0^d}  { \left( \frac{1}{(2\pi\iu)^d} \cintpos_{\mathcal{C}(\boldsymbol{\rho})} {\mathbf{u}^{\mathbf{n} - \mathbf{q} \odot \boldsymbol{\lambda}_{\mathbf{r}} - \boldsymbol{\mu}_{\mathbf{r}} - \mathbf{1}} \rd \mathbf{u}} \right) \mathbf{t}_{k-1}( \mathbf{n} ) \, \mathbf{z}^{\mathbf{q} \odot \mathbf{m} + \mathbf{r}}}     \\
& =  \sum_{\mathbf{r} \in {\mathcal{R}(\mathbf{m})}} \frac{\mathbf{z}^\mathbf{r}}{(2\pi\iu)^d} \cintpos_{\mathcal{C}(\boldsymbol{\rho})} \frac{1}{\mathbf{u}^{\boldsymbol{\mu}_{\mathbf{r}}+\mathbf{1}}}   \left( \sum_{\mathbf{q} \in \NN_0^d} \frac{\mathbf{z}^{\mathbf{q} \odot \mathbf{m}}}{\mathbf{u}^{\mathbf{q} \odot \boldsymbol{\lambda}_\mathbf{r}}} \right)   \left( \sum_{\mathbf{n} \in \NN_0^d} \mathbf{t}_{k-1}(\mathbf{n}) \, \mathbf{u}^{\mathbf{n}} \right)  \! \rd \mathbf{u}       \\
& = \sum_{\mathbf{r} \in {\mathcal{R}(\mathbf{m})}} \frac{\mathbf{z}^\mathbf{r}}{(2\pi\iu)^d} \cintpos_{\mathcal{C}(\boldsymbol{\rho})} \frac{1}{\mathbf{u}^{\boldsymbol{\mu}_{\mathbf{r}}+\mathbf{1}}}  \left( \prod_{j=1}^d {\left( \sum_{q \in \NN_0} \left(\frac{z_j^{m_j}}{u_j^{\lambda_{\mathbf{r},j}}}\right)^q \right)}  \right) \mathbf{f}_{k-1}(\mathbf{u})  \rd \mathbf{u}        \\
& = \sum_{\mathbf{r} \in {\mathcal{R}(\mathbf{m})}} \frac{\mathbf{z}^\mathbf{r}}{(2\pi\iu)^d} \cintpos_{\mathcal{C}(\boldsymbol{\rho})}  \left( \prod_{j=1}^d {\frac{1}{u_j^{\mu_{\mathbf{r},j} - \lambda_{\mathbf{r},j} + 1} (u_j^{\lambda_{\mathbf{r},j}}-z_j^{m_j})}} \right)  \mathbf{f}_{k-1}(\mathbf{u})  \rd \mathbf{u}   \\
& \stackrel{\mathclap{\eqref{eq:Partial_fraction_decomposition_wrt_u_multi-dimensional}}}{=}  \sum_{\mathbf{r} \in {\mathcal{R}(\mathbf{m})}} \frac{\mathbf{z}^\mathbf{r}}{(2\pi\iu)^d} \cintpos_{\mathcal{C}(\boldsymbol{\rho})}  \left( \sum_{(\boldsymbol{\ell},\nu) \in \mathcal{L}(\mathbf{r})} {\frac{\eta_{\mathbf{r},\boldsymbol{\ell},\nu}(\mathbf{z})}{\left( \mathbf{u} - \boldsymbol{\varphi}_{\mathbf{r},\boldsymbol{\ell},\nu}(\mathbf{z}) \right)^{\boldsymbol{\ell}+\mathbf{1}}}} \right)  \mathbf{f}_{k-1}(\mathbf{u})  \rd \mathbf{u}       \\
& =  \sum_{\mathbf{r} \in {\mathcal{R}(\mathbf{m})}} \mathbf{z}^\mathbf{r} \sum_{(\boldsymbol{\ell},\nu) \in \mathcal{L}(\mathbf{r})} \frac{\eta_{\mathbf{r},\boldsymbol{\ell},\nu}(\mathbf{z})}{(2\pi\iu)^d} \cintpos_{\mathcal{C}(\boldsymbol{\rho})}   {\frac{\mathbf{f}_{k-1}(\mathbf{u})}{\left( \mathbf{u} - \boldsymbol{\varphi}_{\mathbf{r},\boldsymbol{\ell},\nu}(\mathbf{z}) \right)^{\boldsymbol{\ell}+\mathbf{1}}}}    \rd \mathbf{u}       \\
& \stackrel{\mathclap{\eqref{eq:Residue_with_independent_poles}}}{=}  \sum_{\mathbf{r} \in {\mathcal{R}(\mathbf{m})}} \mathbf{z}^\mathbf{r} \sum_{(\boldsymbol{\ell},\nu) \in \mathcal{L}(\mathbf{r})}  \frac{\eta_{\mathbf{r},\boldsymbol{\ell},\nu}(\mathbf{z})}{\boldsymbol{\ell}!}  {\mathbf{f}_{k-1}^{(\boldsymbol{\ell})}(\boldsymbol{\varphi}_{\mathbf{r},\boldsymbol{\ell},\nu}(\mathbf{z}))}   ,   
\end{align*} 
with initial condition $\mathbf{f}_0(\mathbf{z}) = \sum_{\mathbf{n} \in \NN_0^d} {\mathbf{n} \, \mathbf{z}^{\mathbf{n}}}$, or equivalently $f_{j,0}(\textbf{z}) = \sum_{\mathbf{n} \in \NN_0^d} {n_j \, \mathbf{z}^{\mathbf{n}}} = \linebreak \left(\sum_{n \in \NN_0} {n z_j^n} \right) \prod_{l \neq j} \left( \sum_{n \in \NN_0} {z_l^n} \right) = \frac{z_j}{(1-z_j)^2 \prod_{l \neq j} {(1-z_l)}}$ for all $j \in \{1,\dots,d\}$. Here, we have used the fact that $\sum_{n \in \NN_0} {n y^n} = \frac{y}{(1-y)^2}$ and $\sum_{n \in \NN_0} {y^n} = \frac{1}{1-y}$ for all $y \in \DD$. 

Note that the interchange of the order of summations in the third equality is possible, because by Proposition~\ref{prop:Unconditional_uniform_and_pointwise_convergence_of_vector_GFs} the vector power series $\mathbf{f}_k(\mathbf{z}) = \sum_{\mathbf{n} \in \NN_0^d} {{\mathbf{t}_k(\mathbf{n})} \, \mathbf{z}^{\mathbf{n}}}$ is  unconditionally pointwise convergent on $\DD^d$ (and therefore on $\DD_{*}^d \subseteq \DD^d$). 

Furthermore, based on \cite[Theorem~7.16 and its Corollary]{Rudin_1976}, the interchange of the order of summation and contour-integration in the seventh equality is valid because of the particular choice of the poly-radius $\boldsymbol{\rho}$. More precisely, this choice guarantees \emph{unconditional uniform convergence on the integration contour $\mathcal{C}(\boldsymbol{\rho})$} of the (multiple) infinite series: $\sum_{\mathbf{q} \in \NN_0^d} \frac{\mathbf{z}^{\mathbf{q} \odot \mathbf{m}}}{\mathbf{u}^{\mathbf{q} \odot \boldsymbol{\lambda}_\mathbf{r}}} = \sum_{\mathbf{q} \in \NN_0^d} \prod_{j=1}^d { \left(\frac{z_j^{m_j}}{u_j^{\lambda_{\mathbf{r},j}}}\right)^{q_j} }$ (since $\abs{{z_j^{m_j}}/{u_j^{\lambda_{\mathbf{r},j}}}} = {\abs{z_j}^{m_j}}/{\rho_j^{\lambda_{\mathbf{r},j}}} < 1$, for all $j \in \{1,\dots,d\}$ and all $\mathbf{u} \in \mathcal{C}(\boldsymbol{\rho})$) and $\sum_{\mathbf{n} \in \NN_0^d} \mathbf{t}_{k-1}(\mathbf{n}) \, \mathbf{u}^{\mathbf{n}}$ (since $\maxnorm{\mathbf{u}} = \maxnorm{\boldsymbol{\rho}} < 1$, for all $\mathbf{u} \in \mathcal{C}(\boldsymbol{\rho})$); cf.~Proposition~\ref{prop:Sufficient_condition_for_unconditional_uniform_and_pointwise_convergence_of_multiple_power_series}~and~Proposition~\ref{prop:Unconditional_uniform_and_pointwise_convergence_of_vector_GFs}, respectively.

\subsection{Proof of Statement~\ref{statement:2.c}}

According to \eqref{eq:Vector_generating_function_F} and using Lemma~\ref{lem:Cauchy_integral_formulas}, we obtain  
\begin{align*} 
\mathbf{F}(\mathbf{z},w) & = \sum_{k \in \NN_0} {\mathbf{f}_k(\mathbf{z}) \, w^k} = \mathbf{f}_0(\mathbf{z}) + \sum_{k \in \NN} {\mathbf{f}_k(\mathbf{z}) \, w^k}   \\
& \stackrel{\mathclap{\eqref{eq:Functional_recurrence_vector_f_k}}}{=} \mathbf{f}_0(\mathbf{z}) + \sum_{\mathbf{r} \in {\mathcal{R}(\mathbf{m})}} \mathbf{z}^\mathbf{r} \sum_{(\boldsymbol{\ell},\nu) \in \mathcal{L}(\mathbf{r})}  \frac{\eta_{\mathbf{r},\boldsymbol{\ell},\nu}(\mathbf{z})}{\boldsymbol{\ell}!} \sum_{k \in \NN} {\mathbf{f}_{k-1}^{(\boldsymbol{\ell})}(\boldsymbol{\varphi}_{\mathbf{r},\boldsymbol{\ell},\nu}(\mathbf{z})) \, w^k}      \\
& = \mathbf{f}_0(\mathbf{z}) + w \sum_{\mathbf{r} \in {\mathcal{R}(\mathbf{m})}} \mathbf{z}^\mathbf{r} \sum_{(\boldsymbol{\ell},\nu) \in \mathcal{L}(\mathbf{r})}  \frac{\eta_{\mathbf{r},\boldsymbol{\ell},\nu}(\mathbf{z})}{\boldsymbol{\ell}!} \sum_{k \in \NN_0} {\mathbf{f}_{k}^{(\boldsymbol{\ell})}(\boldsymbol{\varphi}_{\mathbf{r},\boldsymbol{\ell},\nu}(\mathbf{z})) \, w^k}    \\
& \stackrel{\mathclap{\eqref{eq:Uniform_convergence_of_series_of_derivatives}}}{=}  \mathbf{f}_0(\mathbf{z}) + w \sum_{\mathbf{r} \in {\mathcal{R}(\mathbf{m})}} \mathbf{z}^\mathbf{r} \sum_{(\boldsymbol{\ell},\nu) \in \mathcal{L}(\mathbf{r})}  \frac{\eta_{\mathbf{r},\boldsymbol{\ell},\nu}(\mathbf{z})}{\boldsymbol{\ell}!} \mathbf{F}_{*}^{(\boldsymbol{\ell})}(\boldsymbol{\varphi}_{\mathbf{r},\boldsymbol{\ell},\nu}(\mathbf{z}),w)    ,
\end{align*} 
where in the last equality we also used Proposition~\ref{prop:Unconditional_uniform_convergence_implies_unconditional_pointwise_convergence}. 

This completes the proof of Theorem~\ref{thrm:Theorem_2}.


\vspace{2cc}

\noindent
{\bf Christos N. Efrem} \\
National Technical University of Athens, Greece \\
e-mail: \{the first two letters of the author's first name followed by the first three letters of the author's last name (all in lowercase)\}@mail.ntua.gr

\end{document}